\newtheorem{theorem}{Theorem}[section]
\newtheorem{lemma}[theorem]{Lemma}
\newtheorem*{theorem*}{Theorem}
\newtheorem*{lemma*}{Lemma}
\newtheorem*{remark*}{Remark}
\newtheorem*{definition*}{Definition}
\newtheorem*{proposition*}{Proposition}
\newtheorem*{corollary*}{Corollary}
\numberwithin{equation}{section}
\newcommand{\real}{\mathbb{R}}
\let\ced=\c         
\def\qed{\,\unskip\kern 6pt \penalty 500
\raise -2pt\hbox{\vrule \vbox to8pt{\hrule width 6pt
\vfill\hrule}\vrule}\par}
\definecolor{darkblue}{rgb}{0.05, .05, .65}
\definecolor{darkgreen}{rgb}{0.1, .65, .1}
\definecolor{darkred}{rgb}{0.8,0,0}
\newcommand{\beqn}{\begin{equation}}
\newcommand{\eeqn}{\end{equation}}
\newcommand{\bear}{\begin{eqnarray}}
\newcommand{\eear}{\end{eqnarray}}
\newcommand{\bean}{\begin{eqnarray*}}
\newcommand{\eean}{\end{eqnarray*}}
\begin{document}

\title{\huge \bf Global solutions versus finite time blow-up for the supercritical fast diffusion equation with inhomogeneous source}

\author{
\Large Razvan Gabriel Iagar\,\footnote{Departamento de Matem\'{a}tica
Aplicada, Ciencia e Ingenieria de los Materiales y Tecnologia
Electr\'onica, Universidad Rey Juan Carlos, M\'{o}stoles,
28933, Madrid, Spain, \textit{e-mail:} razvan.iagar@urjc.es},\\
[4pt] \Large Ariel S\'{a}nchez,\footnote{Departamento de Matem\'{a}tica
Aplicada, Ciencia e Ingenieria de los Materiales y Tecnologia
Electr\'onica, Universidad Rey Juan Carlos, M\'{o}stoles,
28933, Madrid, Spain, \textit{e-mail:} ariel.sanchez@urjc.es}\\
[4pt] }
\date{}
\maketitle

\begin{abstract}
Solutions in self-similar form, either global in time or presenting finite time blow-up, to the supercritical fast diffusion equation with spatially inhomogeneous source
$$
\partial_tu=\Delta u^m+|x|^{\sigma}u^p, \quad (x,t)\in\real^N\times(0,\infty)
$$
with
$$
m_c=\frac{(N-2)_+}{N}\leq m<1, \quad \sigma\in(\max\{-2,-N\},\infty), \quad p>\max\left\{1+\frac{\sigma(1-m)}{2},1\right\}
$$
are considered. It is proved that global self-similar solutions with the specific tail behavior
$$
u(x,t)\sim C(m)|x|^{-2/(1-m)}, \qquad {\rm as} \ |x|\to\infty
$$
exist exactly for $p\in(p_F(\sigma),p_s(\sigma))$, where
$$
p_F(\sigma)=m+\frac{\sigma+2}{N}, \qquad p_s(\sigma)=\left\{\begin{array}{ll}\frac{m(N+2\sigma+2)}{N-2}, & N\geq3,\\\infty, & N\in\{1,2\}, \end{array}\right.
$$
are the renowned Fujita and Sobolev critical exponents. In contrast, it is shown that self-similar solutions presenting finite time blow-up exist for any $\sigma\in(-2,0)$ and $p$ as above, but do not exist for any $\sigma\geq0$ and $p\in(p_F(\sigma),p_s(\sigma))$. We stress that all these results are \emph{new also in the homogeneous case $\sigma=0$}.
\end{abstract}

\

\noindent {\bf Mathematics Subject Classification 2020:} 35B33, 35B36, 35B44, 35C06, 35K57.

\smallskip

\noindent {\bf Keywords and phrases:} fast diffusion, finite time blow-up, spatially inhomogeneous source, Sobolev critical exponent, Fujita critical exponent, self-similar solutions.

\section{Introduction}

The fast diffusion equation
\begin{equation}\label{FDE}
u_t=\Delta u^m, \qquad 0<m<1,
\end{equation}
has established itself in the last half century as one of the central models in the theory of nonlinear diffusion equations. A good monograph on this subject is the book \cite{VazSmooth}, and we also refer the reader to the recent survey by Bonforte and Figalli \cite{BF24}, where also some applications in physics are enumerated. In particular, the mathematical analysis of the fast diffusion equation depends on a fundamental exponent known as the \emph{critical exponent}
\begin{equation*}
m_c=\frac{(N-2)_{+}}{N},
\end{equation*}
splitting the interval $(0,1)$ into the \emph{supercritical range} $m_c<m<1$ and the \emph{subcritical range} $0<m<m_c$. In the supercritical range $m>m_c$, as well as for $m=m_c$, Eq. \eqref{FDE} maintains some of the features of the heat equation, conserving the total mass (or $L^1$ norm) of the initial condition along the evolution, while the subcritical range produces typically the phenomenon of \emph{finite time extinction} of the solutions, due to a loss of mass at infinity, a rather unexpected phenomenon well explained in \cite[Section 5.5]{VazSmooth}. Finer qualitative differences between the two ranges have been established in papers such as \cite{BV10, BIV10}.

A significant feature of nonlinear diffusion equations is the availability of solutions in radially symmetric self-similar form, either in forward form, that is
\begin{equation}\label{forward.SS}
u(x,t)=t^{-\alpha}f(\xi), \qquad \xi=|x|t^{-\beta},
\end{equation}
or in backward form, that is
\begin{equation}\label{backward.SS}
u(x,t)=(T-t)^{-\alpha}f(\xi), \qquad \xi=|x|(T-t)^{-\beta}, \qquad T\in(0,\infty),
\end{equation}
for suitable exponents $\alpha$ and $\beta$. Solutions as in \eqref{forward.SS} are global in time and usually prototypes of behavior of general solutions as $t\to\infty$, while solutions as in \eqref{backward.SS} model finite time blow-up or finite time extinction. Just to quote a few number of works from the rather huge literature devoted to the role of self-similar solutions in developing the theory of nonlinear diffusion equations, we refer the reader to papers such as \cite{Le96, GV97, FT00, BBDGV, IL14, BIS16} and references therein. Let us stress here that, in all these works (and others), it has been noticed that the most interesting self-similar solutions in the study of a fast diffusion equation are the ones presenting \emph{the fastest possible decay} as $|x|\to\infty$. As an example, a celebrated branch of self-similar solutions to Eq. \eqref{FDE} are the \emph{anomalous solutions} obtained rigorously in \cite{PZ95} following the more formal analysis in \cite{Ki93}.

The present work is devoted to the classification of possible behaviors (at the level of self-similar solutions) of solutions to the supercritical fast diffusion equation with a source
\begin{equation}\label{eq1}
u_t=\Delta u^m+|x|^{\sigma}u^p, \quad (x,t)\in\real^N\times(0,\infty),
\end{equation}
with $m\in[m_c,1)$, $N\geq1$ and $\sigma\in(\max\{-2,-N\},\infty)$. This equation involves a competition between the fast diffusion equation and a source term, raising the question whether the increase in mass introduced by the source term is sufficiently strong to produce finite time blow-up. In this work, we restrict ourselves to the range
\begin{equation*}
p>\max\{1,p_L(\sigma)\}, \quad p_L(\sigma)=1+\frac{\sigma(1-m)}{2},
\end{equation*}
since the complementary range has been considered in our recent works \cite{IS22c, IMS23b}.

Eq. \eqref{eq1} has been thoroughly studied in the semilinear case $m=1$ and the slow diffusion case $m>1$, especially with a homogeneous source term, that is, with $\sigma=0$. The main emphasis in this long-term study has been the phenomenon of finite time blow-up and its finer properties (blow-up sets, rates, profiles), see for example the monographs \cite{QS} for $m=1$ and \cite{S4} for $m>1$. The seminal work by Fujita \cite{Fu66} and further extensions of it led to the critical exponent known nowadays as \emph{the Fujita exponent}, which in the case of Eq. \eqref{eq1} writes
\begin{equation*}
p_F(\sigma)=m+\frac{2+\sigma}{N}
\end{equation*}
limiting the range where all non-trivial solutions blow up in finite time, that is, $1<p\leq p_F(\sigma)$, and the range where global solutions exist, $p>p_F(\sigma)$. The semilinear case of Eq. \eqref{eq1} with $\sigma>0$ had been considered in a number of older papers such as \cite{BK87, Pi97, Pi98} devoted to general qualitative properties and the life span of the solutions. Filippas and Tertikas \cite{FT00} performed an analysis of self-similar solutions to Eq. \eqref{eq1} with $m=1$. Mukai and Seki \cite{MS21} described, also for $m=1$ and $\sigma\in(-2,\infty)$, the phenomenon of blow-up of Type II, that is, with variable rates and geometric patterns, which is characteristic to larger values of $p>1$. The authors and their collaborators started in recent years a larger project of understanding the influence of the weight $|x|^{\sigma}$ in Eq. \eqref{eq1}, and in the range $m>1$ a number of results concerning the analytical properties of self-similar solutions have been obtained, see for example \cite{IS22b, IMS23, ILS23, IL22} and references therein. It has been noticed in these works that the sign of the expression
\begin{equation}\label{const.L}
L:=\sigma(m-1)+2(p-1)
\end{equation}
strongly influences the dynamics of Eq. \eqref{eq1}. In particular, our limitation $p>p_L(\sigma)$ is equivalent to $L>0$, a range which includes the case of homogeneous source term $\sigma=0$.

The fast diffusion range of Eq. \eqref{eq1} has been less considered in papers, most of these works being centered in identifying whether the Fujita exponent $p_F(\sigma)$ keeps playing its role of separating the two regimes as explained above, see for example \cite{Qi93, MM95, GuoGuo01} for $\sigma=0$ and \cite{Qi98} for $\sigma\in(-2,\infty)$. The latter work \cite{Qi98} gives also an example of a global self-similar solution in forward form \eqref{forward.SS} for $p>p_F(\sigma)$. Maing\'e \cite{Ma08} extended the results of \cite{GuoGuo01} in order to describe the connection between the decay rate of an initial condition $u_0$ as $|x|\to\infty$ and the time frame of existence of the solution to the Cauchy problem with data $u_0$ for the whole fast diffusion range $m\in(0,1)$. More recently, the fast diffusion equation with localized weight and in dimension $N=1$ has been considered in \cite{BZZ11}.

Justified by this lack of previous results, we started working on the fast diffusion range of Eq. \eqref{eq1} and found some rather unexpected results in our previous works \cite{IS22c, IMS23b}. In the former, the critical case $p=p_L(\sigma)$, that is, $L=0$, is addressed, and \emph{eternal} self-similar solutions in exponential form are found and classified, while the latter work deals with the case $1<p<p_L(\sigma)$ (which automatically implies $\sigma>0$). In both works, existence of solutions in self-similar form is limited to the subcritical range $m\in(0,m_c)$ and falls out of the present analysis. However, the behavior of the solutions as $|x|\to\infty$ depends on two important critical exponents defined as
\begin{equation*}
p_c(\sigma):=\left\{\begin{array}{ll}\frac{m(N+\sigma)}{N-2}, & N\geq3,\\ \infty, & N\in\{1,2\},\end{array}\right.
\qquad p_s(\sigma):=\left\{\begin{array}{ll}\frac{m(N+2\sigma+2)}{N-2}, & N\geq3,\\ \infty, & N\in\{1,2\},\end{array}\right.
\end{equation*}
which will also come decisively into play in the current work. We are now ready to introduce and explain our results.

\medskip

\noindent \textbf{Main results.} As previously explained, our goal is to identify the optimal ranges of existence and non-existence and then classify (according to their behavior as $|x|\to\infty$) the self-similar solutions to Eq. \eqref{eq1}, for $m\in[m_c,1)$, $\sigma\in(\max\{-2,-N\},\infty)$ and $p>\max\{1,p_L(\sigma)\}$, in one of the forms \eqref{forward.SS} or \eqref{backward.SS}. By replacing these ansatz into Eq. \eqref{eq1} and after straightforward calculations, we find that in both cases the self-similar exponents are
\begin{equation}\label{SS.exp}
\alpha=\frac{\sigma+2}{L}>0, \qquad \beta=\frac{p-m}{L}>0,
\end{equation}
where $L>0$ is the constant defined in \eqref{const.L}. 

\medskip

\noindent \textbf{A. Global self-similar solutions}. We are looking for self-similar solutions in forward form \eqref{forward.SS}, with $\alpha$, $\beta$ as in \eqref{SS.exp}. A direct calculation gives that the profiles $f$ solve the differential equation
\begin{equation}\label{ODE.forward}
(f^m)''(\xi)+\frac{N-1}{\xi}(f^m)'(\xi)+\alpha f(\xi)+\beta\xi f'(\xi)+\xi^{\sigma}f^p(\xi)=0.
\end{equation}
We will show that the local behavior at $\xi=0$ is given by the following local expansion:
\begin{equation}\label{beh.P0f}
f(\xi)\sim\left\{\begin{array}{ll}\left[D+\frac{\alpha(1-m)}{2mN}\xi^2\right]^{-1/(1-m)}, & \sigma>0,\\[1mm]
\left[D+\frac{(1-m)\alpha(1+\alpha D^{(p-1)/(m-1)})}{2mN}\xi^2\right]^{-1/(1-m)}, & \sigma=0,\\[1mm]
\left[D+\frac{p-m}{m(N+\sigma)(\sigma+2)}\xi^{\sigma+2}\right]^{-1/(p-m)}, & \sigma\in(-2,0),
\end{array}\right. \ \ {\rm as} \ \xi\to0, \ \ D>0.
\end{equation}
With this preparation, we are ready to give the following sharp classification result.
\begin{theorem}\label{th.global.super}
Let $m\in[m_c,1)$ and $\sigma\in(\max\{-2,-N\},\infty)$. Then
\begin{enumerate}
  \item If $m>m_c$, then for any $p\in(\max\{1,p_F(\sigma)\},\max\{1,p_s(\sigma)\})$, there exists at least a global solution in the form \eqref{forward.SS} whose profile satisfies the local behavior \eqref{beh.P0f} as $\xi\to0$ and presents the following fast decay
\begin{equation}\label{beh.P3}
f(\xi)\sim\left[\frac{2m(mN-N+2)}{1-m}\right]^{1/(1-m)}\xi^{-2/(1-m)}, \qquad {\rm as} \ \xi\to\infty,
\end{equation}
and infinitely many global solutions in the form \eqref{forward.SS} whose profiles present the same local behavior as $\xi\to0$ and the slower decay at infinity
\begin{equation}\label{beh.Q1f}
f(\xi)\sim K\xi^{-(\sigma+2)/(p-m)}, \qquad {\rm as} \ \xi\to\infty, \qquad K>0.
\end{equation}
  \item If $m=m_c$, then for any $p\in(\max\{1,p_F(\sigma)\},\max\{1,p_s(\sigma)\})$, all the above hold true, but the fast decay \eqref{beh.P3} as $\xi\to\infty$ is replaced by the following one involving a logarithmic correction
\begin{equation}\label{beh.P3crit}
f(\xi)\sim C_0\xi^{-N}(\ln\,\xi)^{-N/2}, \qquad C_0=\left[\frac{N}{(N-2)^2}\right]^{-N/2}.
\end{equation}
  \item If $N\geq3$, for any $p\geq\max\{1,p_s(\sigma)\}$ there are infinitely many global solutions in forward form \eqref{forward.SS} whose profiles present the local behavior \eqref{beh.P0f} as $\xi\to0$ and the slow decay \eqref{beh.Q1f} as $\xi\to\infty$, but no solutions with decay as in \eqref{beh.P3}.
  \item If $p\leq p_F(\sigma)$, there are no global solutions in forward form \eqref{forward.SS} to Eq. \eqref{eq1} at all.
\end{enumerate}
\end{theorem}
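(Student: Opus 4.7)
The natural framework is a phase-plane analysis of the profile equation \eqref{ODE.forward}. I would begin by rewriting it as an autonomous three-dimensional dynamical system via logarithmic-type variables such as
\begin{equation*}
X=\frac{\xi^{2} f^{m-1}(\xi)}{m},\qquad Y=\frac{\xi f'(\xi)}{f(\xi)},\qquad Z=\frac{\xi^{\sigma+2} f^{p-m}(\xi)}{m},\qquad \eta=\ln\xi.
\end{equation*}
Each of the three asymptotic behaviors \eqref{beh.P0f}, \eqref{beh.P3} and \eqref{beh.Q1f} then corresponds to a critical point of the resulting system, which I would label $P_0$, $P_2$ and $Q_1$ respectively. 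The first step is to linearize at each critical point, read off the eigenvalues, and verify that $P_0$ admits a one-parameter family of outgoing orbits parametrized by the constant $D>0$ of \eqref{beh.P0f}, while the dimensions and stability types of the (un)stable manifolds of $P_2$ and $Q_1$ depend precisely on the sign of $p-p_F(\sigma)$ and $p-p_s(\sigma)$. This is the mechanism by which the two critical exponents enter the analysis.

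For part 1, I would carry out a topological shooting argument. Parametrize by $D>0$ the orbits leaving $P_0$, and partition $(0,\infty)$ into three sets according to the ultimate fate of the orbit: those whose $\omega$-limit is $Q_1$ (slow decay), those for which $f$ vanishes at some finite $\xi$, and those reaching $P_2$ (fast decay). Using comparison with model profiles (for example, the explicit Barenblatt profile and the pure source ODE $(f^m)'' \sim \xi^\sigma f^p$) together with the local analyses at the critical points, the first two sets are shown to be open and non-empty when $\max\{1,p_F(\sigma)\}<p<\max\{1,p_s(\sigma)\}$. Connectedness of $(0,\infty)$ then forces existence of at least one $D^{\ast}$ yielding a connection from $P_0$ to $P_2$, and the openness of the slow-decay set gives the claimed infinite family with behavior \eqref{beh.Q1f}.

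Parts 3 and 4 concern non-existence. For part 3, I would rule out fast decay when $p\geq p_s(\sigma)$ via a Pohozaev-type identity obtained by multiplying \eqref{ODE.forward} by a test function of the form $\xi^{a}\bigl((f^m)'(\xi)+b\xi^{-1}f^m(\xi)\bigr)$ with suitably chosen $a,b$ and integrating on $(0,\infty)$; the threshold at which the integrated identity becomes sign-definite is exactly $p=p_s(\sigma)$, while the fast decay \eqref{beh.P3} ensures that all boundary contributions vanish, yielding the desired contradiction. The infinite slow-decay family persists through the same shooting as in part 1. Part 4 is the simplest: when $p\leq p_F(\sigma)$, the self-similar exponents force a sign contradiction in the identity obtained by multiplying \eqref{ODE.forward} by $\xi^{N-1}$ and integrating, since $\alpha-N\beta\leq 0$ prevents the profile from carrying any positive mass at infinity.

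The main obstacle is part 2, the borderline case $m=m_c$: here the critical point $P_2$ degenerates (one eigenvalue collapses to zero), and its local dynamics can no longer be read off from a naive linearization. The logarithmic correction appearing in \eqref{beh.P3crit} must then be extracted either by a center-manifold reduction at $P_2$ or, equivalently, by performing a direct higher-order asymptotic expansion of \eqref{ODE.forward} as $\xi\to\infty$ in the critical case and solving the resulting transport-type relation for the next-order term. This degenerate asymptotic analysis, together with showing that the shooting argument of part 1 survives the loss of hyperbolicity at $P_2$, constitutes the delicate technical heart of the proof.
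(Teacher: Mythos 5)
Your overall framework---recasting \eqref{ODE.forward} as an autonomous system in the variables $(X,Y,Z)$ with $\eta=\ln\xi$ and running a topological shooting on the one-parameter family of orbits leaving the critical point encoding \eqref{beh.P0f}---is exactly the paper's strategy for Parts 1 and 2, and your diagnosis of Part 2 (the fast-decay point loses hyperbolicity at $m=m_c$ and the logarithm comes out of a center-manifold reduction) matches the paper, which shows $P_1=P_3$ at $m=m_c$ and extracts \eqref{beh.P3crit} from the flow on the center-stable manifold. (Minor slip: your $X=\xi^2f^{m-1}/m$ should be $\xi^2f^{1-m}$, otherwise the fast decay \eqref{beh.P3} is sent to infinity rather than to a finite critical point.) However, there are two genuine gaps in Part 1 and a divergence in Parts 3--4 worth spelling out.

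First, you locate the role of the Fujita exponent in the \emph{local} linearizations, asserting that the stability types of the fast-decay point and of $Q_1$ ``depend precisely on the sign of $p-p_F(\sigma)$.'' They do not: in the paper the fast-decay point $P_3$ is a saddle with a two-dimensional stable manifold for every admissible $p$, and $Q_1$ always behaves as an attractor. The exponent $p_F(\sigma)$ enters only \emph{globally}, through the fate of the boundary orbit $l_0$ of the unstable manifold of $P_0$ inside the invariant plane $\{Z=0\}$: one computes the flow across the line $Y=-X/N$ and the second-order Taylor expansion of $l_0$, finding that $l_0$ reaches $Q_1$ exactly when $p>p_F(\sigma)$ and is trapped in $\{Y\le -X/N\}$ when $p\le p_F(\sigma)$ (this also yields Part 4 directly, since the region $\{Y\le -X/N\}$ is positively invariant for the full system and contains no tail behavior). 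Your proposed non-emptiness argument ``by comparison with model profiles'' does not supply this threshold. Second, your trichotomy omits the hardest step: showing that an orbit which neither dies at a finite zero nor reaches $Q_1$ must actually converge to $P_3$. The paper has to exclude periodic orbits, convergence to $P_2$, and escape along the critical line $Q_\gamma$/$Q_4$ at infinity; this requires a monotonicity argument for $g(\xi)=\xi^{(\sigma+2)/(p-m)}f(\xi)$ plus Poincar\'e--Bendixson in $\{Z=0\}$, and cannot be waved away by connectedness alone.

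For Parts 3 and 4 you propose integral (Pohozaev/mass) identities, which is a genuinely different route from the paper's purely geometric one: the paper rules out the fast decay for $p\ge p_s(\sigma)$ by building a cylinder over the explicit curve $Z=-\frac{N+\sigma}{N-2}(mY+N-2)Y$ in $\{X=0\}$, showing the flow never exits it and that $P_3$ lies outside, and handles $p\le p_F(\sigma)$ with the invariant half-space above. The authors explicitly note that Pohozaev identities exploit the linearity of the leading term and break down for $m<1$; your multiplier is left unspecified and the vanishing of boundary terms under \eqref{beh.P3} is not checked, so this part is speculative. Your Part 4 sketch also has the sign backwards: $p\le p_F(\sigma)$ is equivalent to $\alpha-N\beta\ge 0$, and it is this inequality (not $\le 0$) that makes the integrated identity sign-definite.
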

Let us remark first that the behavior \eqref{beh.P3} is \emph{specific to the supercritical fast diffusion}. Indeed, it cannot exist either for $m\geq 1$ or in the subcritical range of the fast diffusion (as we shall see in a future work), and in the range $m\in[m_c,1)$ it is a faster decay than \eqref{beh.Q1f}, since
$$
\frac{2}{1-m}-\frac{\sigma+2}{p-m}=\frac{L}{(p-m)(1-m)}>0.
$$
This behavior seems to have been left unnoticed for the fast diffusion with source terms, although it is similar to the one of the ZKB solutions of the pure fast diffusion \eqref{FDE} given in \cite[Section 2.1]{VazSmooth}, see also \cite{BBDGV} for an analysis of their role in the large time behavior of solutions to \eqref{FDE}. The same behavior \eqref{beh.P3} has been identified, for $\sigma=0$, as the decay of very singular solutions to the fast diffusion equation with absorption in the supercritical range \cite{Le96}. The last item in Theorem \ref{th.global.super} is expected, in the sense that it has been shown in previous works such as \cite{Qi98, Su02} that there are in general no global solutions for $1<p\leq p_F(\sigma)$, and we put it in the statement for the sake of completeness. However, the proof of Theorem \ref{th.global.super} allows us to get a new understanding of the Fujita exponent as a separatrix between two different regimes of behavior. We also observe that the classification given in Theorem \ref{th.global.super} is sharp, as all the values of $p$ are covered and explicit critical exponents limit the ranges of existence of the possible behaviors.

\medskip

\noindent \textbf{B. Self-similar solutions with blow-up}. We are looking for self-similar solutions to Eq. \eqref{eq1} in backward form \eqref{backward.SS}, with $\alpha$ and $\beta$ as in \eqref{SS.exp} and whose profiles solve the differential equation
\begin{equation}\label{ODE.backward}
(f^m)''(\xi)+\frac{N-1}{\xi}(f^m)'(\xi)-\alpha f(\xi)-\beta\xi f'(\xi)+\xi^{\sigma}f^p(\xi)=0.
\end{equation}
The next theorem extends to the supercritical fast diffusion the analysis done in \cite{FT00} for $m=1$, although the techniques of the proofs are very different.
\begin{theorem}\label{th.blowup.super}
Let $m\in[m_c,1)$. Then we have:
\begin{enumerate}
  \item For any $\sigma\in(\max\{-2,-N\},0)$ and $p\in(1,\max\{1,p_s(\sigma)\})$, there exists at least one profile $f(\xi)$ solution to Eq. \eqref{ODE.backward} for self-similar solutions in backward form \eqref{backward.SS} with $\alpha$ and $\beta$ as in \eqref{SS.exp} and presenting finite time blow-up. It has the local behavior \eqref{beh.P0f} for $\sigma<0$ and the decay \eqref{beh.Q1f} as $\xi\to\infty$.
  \item In the homogeneous case $\sigma=0$ and $p\in(1,p_s(0))$, there is no self-similar solution in backward form \eqref{backward.SS} to Eq. \eqref{eq1}.
  \item For any $\sigma>0$ and $p\in(p_F(\sigma),p_s(\sigma))$, there is no self-similar solution in backward form \eqref{backward.SS} to Eq. \eqref{eq1}. The same non-existence result holds true for $p\in(p_L(\sigma),p_*(\sigma))$ provided $N\geq4$, where
      $$
      p_*(\sigma)=1+\frac{m\sigma}{N-2}.
      $$
\end{enumerate}
\end{theorem}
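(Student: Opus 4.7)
The three parts of Theorem \ref{th.blowup.super} call for two distinct strategies: a shooting argument combined with a phase-plane reformulation for the existence in Part~1, and Pohozaev-type integral identities for the non-existence results in Parts~2 and~3.

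For Part~1 (existence when $\sigma \in (-2, 0)$), the local expansion \eqref{beh.P0f} (third case, since $\sigma < 0$) parametrizes a one-parameter family $(f_D)_{D > 0}$ of local solutions to \eqref{ODE.backward} issued from the origin. I would recast \eqref{ODE.backward} as an autonomous dynamical system via Fowler-type variables such as $X(\eta) = \xi^{(\sigma+2)/(p-m)} f(\xi)$ and $Y(\eta) = \xi (f^m)'(\xi)/f^m(\xi)$, with $\eta = \log \xi$, supplemented by a third coordinate if needed to capture the source nonlinearity. In this phase space, the slow decay \eqref{beh.Q1f} corresponds to a specific finite critical point $Q_1$; the goal is to produce a trajectory connecting to $Q_1$ at infinity. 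The core argument is then a topological shooting: split $\{D > 0\}$ into open subsets on which $f_D$ exhibits mutually exclusive asymptotic behaviors (for instance, vanishing in finite $\xi$ versus unbounded growth or blow-up in finite $\xi$), show both subsets are non-empty by analyzing the small- and large-$D$ limits, and conclude that any $D$ in the separating set produces a connection to $Q_1$. The exponent $p_s(\sigma)$ enters as the threshold where the local spectral type of $Q_1$ bifurcates, ruling out the connection beyond it.

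For Parts~2 and~3 (non-existence), the approach is to multiply \eqref{ODE.backward} by test functions such as $\xi^a$, $\xi^a f(\xi)$ or $\xi^a (f^m)'(\xi)$ and integrate over $(0, \infty)$. After systematic integration by parts, this produces Pohozaev-type identities of the schematic form
\begin{equation*}
A_1 \int_0^\infty \xi^{\mu_1} [(f^m)']^2\, d\xi + A_2 \int_0^\infty \xi^{\mu_2} f^{m+1}\, d\xi = A_3 \int_0^\infty \xi^{\mu_3 + \sigma} f^{p+m}\, d\xi,
\end{equation*}
whose coefficients $A_i$ and exponents $\mu_i$ are explicit functions of $N, m, p, \sigma$. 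The key point is that the backward sign convention $-\alpha f - \beta \xi f'$ in \eqref{ODE.backward} forces the left-hand side coefficients to be (strictly) positive in the stated ranges while the right-hand side coefficient acquires the opposite sign, contradicting the positivity of $f$. The thresholds $p_s(\sigma)$ and $p_*(\sigma)$ emerge precisely as the values where these coefficients change sign; the second non-existence range for $N \geq 4$ in Part~3 requires an independent identity (for instance obtained with the test function $\xi^N f'$) in which $p_*(\sigma) = 1 + m\sigma/(N-2)$ arises from the integrability balance between the weight $\xi^\sigma$ and the diffusion term.

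The chief technical difficulty is justifying the convergence of every integral in these identities and the vanishing of boundary terms at $\xi = 0$ and $\xi = \infty$. A priori, profiles are only known to be positive and smooth, so one must first establish robust decay estimates of the form $f(\xi) \lesssim \xi^{-(\sigma+2)/(p-m)}$ at infinity before any of the identities becomes meaningful. I would achieve this by a bootstrap argument based on integrating \eqref{ODE.backward} against appropriate weights and comparing with explicit sub- and super-solutions built from the algebraic tails \eqref{beh.Q1f}. The same a priori analysis underpins the phase-plane part of the proof, where one must rigorously identify the asymptotic fate of each $f_D$ on the two ends of the parameter range and rule out pathological oscillations stemming from the singularity of $f^{m-1}$ as $f \to 0^+$, a difficulty absent from the semilinear case $m = 1$ treated in \cite{FT00} and the reason the authors stress that the techniques must differ from those used there.
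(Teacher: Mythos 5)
Your plan for Part~1 is essentially the paper's: the existence proof there is a topological shooting on the one-parameter family of orbits leaving the origin of an autonomous phase space (the paper uses three variables $X,Y,Z$ built from $\xi^2f^{1-m}$, $\xi f'/f$ and $\xi^{\sigma+2}f^{p-m}$), splitting the parameter range into two nonempty open sets according to the fate of the orbit and extracting the connection to the point encoding the tail \eqref{beh.Q1f} from the separating set. The one ingredient you should make explicit is where $\sigma<0$ enters: in the paper the decisive step is that at a first tangency of an orbit with the plane $\{Y=0\}$ one computes $Y''(\eta_0)=-\sigma X(\eta_0)>0$, which is what prevents the separatrix orbits from crossing into $\{Y>0\}$ and is exactly the point that fails for $\sigma\geq0$.

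For Parts~2 and~3 there is a genuine gap: the Pohozaev-identity strategy you propose is precisely the method the authors single out as \emph{not} working for $m<1$. The Filippas--Tertikas identity for $m=1$ hinges on the linearity of the principal part: multiplying by $u$ and by $x\cdot\nabla u$ produces gradient terms that are both multiples of $|\nabla u|^2$ and can therefore be collected into a single signed quantity. For Eq.~\eqref{ODE.backward} the natural multipliers $\xi^a f$, $\xi^a(f^m)'$ and $\xi^{a+1}f'$ generate, after integration by parts, quadratic gradient terms of two different homogeneities in $f$ --- schematically $\int\xi^{\mu}[(f^m)']^2=m^2\int\xi^{\mu}f^{2m-2}(f')^2$ from the diffusion term versus $\int\xi^{\mu+1}f'(f^m)'=m\int\xi^{\mu+1}f^{m-1}(f')^2$ from the drift term $-\beta\xi f'$ --- and these cannot be absorbed into one another with a definite sign because the extra factor $f^{m-1}$ is unbounded where $f$ is small and small where $f$ is large. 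Your schematic identity therefore does not close, and the claim that ``the left-hand side coefficients are positive while the right-hand side acquires the opposite sign'' is not substantiated. The paper's actual argument is geometric: in the three-dimensional phase space of \eqref{PSsyst.bu} one builds explicit barriers (the plane $Z=(N+\sigma)(X/N-Y)$ tangent to the unstable manifold of $P_0$, a quadric surface through $P_2$, and for the range $p<p_*(\sigma)$ a further surface in projected variables) across which the flow has a fixed sign, and shows that every orbit emanating from $P_0$ is trapped on the wrong side of the Taylor expansion of the center manifold of $Q_1$; the exponents $p_F(\sigma)$ and $p_*(\sigma)$ arise as the values at which the sign of the flow across these barriers, not of an integral identity, changes. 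The fact that the authors can only reach $p>p_F(\sigma)$ (or $p<p_*(\sigma)$ when $N\geq4$) and explicitly leave the intermediate range open is itself evidence that no single clean identity of the kind you describe is available.
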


\noindent \textbf{Remark.} We strongly believe that, in fact, non-existence of blow-up self-similar solutions holds true for any $\sigma>0$ and $p\in(p_L(\sigma),p_s(\sigma))$, similar to the case $\sigma=0$ (where we can prove it completely), and that it is only a technical problem that does not allow us to complete the proof also in the remaining range $p_*(\sigma)\leq p\leq p_F(\sigma)$. Actually, if $N\geq4$ and $\sigma>N-2$, we readily observe that $p_*(\sigma)>p_F(\sigma)$, hence for this range, our statement covers all the interval of $p$.

We intentionally left aside in the present work the range $p\geq p_s(\sigma)$, where new critical exponents have been identified (such as the Joseph-Lundgren and Lepin exponents) and whose analysis is much more involved, see for example results in \cite{GV97, IS25} for $\sigma=0$ and $m>1$. Coming back to the results in \cite{FT00} for $m=1$, their proof of the non-existence item is based on a Pohozaev identity which exploits the linearity of the dominant order in the equation. This approach does not work with $m<1$, thus we prove the non-existence through a geometric approach in a phase space of a dynamical system, by constructing sharp separatrices limiting its trajectories.

\medskip

\noindent \textbf{A family of stationary solutions for} $\mathbf{p=p_s(\sigma)}$. If $N\geq3$, $\sigma>-2$ and $p=p_s(\sigma)$ there exists a one-parameter family of explicit \emph{stationary} solutions
\begin{equation*}
U_C(x)=\left[\frac{(N-2)(N+\sigma)C}{(|x|^{\sigma+2}+C)^2}\right]^{(N-2)/2m(\sigma+2)}, \quad C>0.
\end{equation*}
Notice that $U_C(0)\in(0,\infty)$ and $U_C'(0)=0$. These solutions have been obtained already in our previous work \cite[Section 9]{IMS23b}, following a particular case previously discovered in \cite[Section 3.5]{IS22c} for $p=p_L(\sigma)$ and $m=(N-2)_+/(N+2)$, but they are still valid as solutions in our range of parameters.

\medskip

\noindent \textbf{Remarks. 1}. The order of the critical exponents is implicitly understood in the statements of the theorems. Indeed, since $m\geq m_c$, we have
\begin{equation*}
\begin{split}
&p_F(\sigma)-p_L(\sigma)=\frac{(\sigma+2)(m-m_c)}{2}>0, \ \ p_c(\sigma)-p_F(\sigma)=\frac{(\sigma+2)(m-m_c)}{N-2}>0, \\ &p_s(\sigma)-p_c(\sigma)=\frac{m(\sigma+2)}{N-2}>0,
\end{split}
\end{equation*}
hence $p_L(\sigma)<p_F(\sigma)<p_c(\sigma)<p_s(\sigma)$. Moreover, observe that $p_s(\sigma)>1$ if and only if 
$$
m>\frac{N-2}{N+2\sigma+2},
$$
and the right hand side tends to one as $\sigma\to-2$. If $\sigma$ is such that $p_s(\sigma)<1$, then the statement for $p>p_s(\sigma)$ is in force for any $p>1$ in Theorem \ref{th.global.super}.  

\textbf{2}. All our results appear to be new even for $\sigma=0$, which is just a particular case in the analysis.

\medskip

\noindent \textbf{A comment on the regularity of the self-similar solutions}. Since all the self-similar profiles identified in Theorems \ref{th.global.super} and \ref{th.blowup.super} are strictly positive for any $\xi>0$, we readily infer that the corresponding solutions are of class $C^2(\real^N\setminus\{0\})$ and the only problem might occur at $\xi=0$. A quick inspection of the local behavior up to second order gives then that the solutions with profiles as in \eqref{beh.P0f} as $\xi\to0$:

$\bullet$ are of class $C^2$ also at $x=0$ when $\sigma\geq0$ and thus are classical solutions.

$\bullet$ belong to $C^1(\real^N)$ but not to $C^2(\real^N)$ for $\sigma\in(-1,0)$. One can check that the subsequent self-similar solutions are then weak solutions to Eq. \eqref{eq1}, but not classical ones.

$\bullet$ are no longer of class $C^1$ at $\xi=0$ and form a peak for $\sigma\in(-2,-1)$. For the subsequent self-similar solutions $u$ we notice that the singularity of $\nabla u^m$ at $x=0$ is of order $|x|^{\sigma+1}$ and we infer that $\nabla u^m\in L^2(\real^N)$ and thus they are still weak solutions to Eq. \eqref{eq1}. We expect them as well to play an important role in the dynamics of the equation. Such solutions with peaks at the origin have been also found in equations involving a singular weight, see \cite{RV06}, being therein essential for the large time behavior. A more precise description of the optimal regularity at $x=0$ of these self-similar solutions seems to be similar to the one in \cite[Section 3.3]{IL25}.

\section{The supercritical range. Phase plane and local analysis}\label{sec.local}

We begin our study with the quest for global solutions in the form \eqref{forward.SS}, whose profiles solve the differential equation \eqref{ODE.forward}. To ease the analysis, we work in dimension $N\geq3$ and postpone dimensions $N=1$ and $N=2$ to Section \ref{sec.N12}. We fix throughout this section $m\in(m_c,1)$, the critical case $m=m_c$ being addressed at the end of it. In order to classify its solutions, we perform the following change of variables, already used in \cite{IMS23b} for the subcritical fast diffusion range:
\begin{equation}\label{PSchange}
X(\eta)=\frac{\alpha}{m}\xi^2f^{1-m}(\xi), \qquad Y(\eta)=\frac{\xi f'(\xi)}{f(\xi)}, \qquad Z(\eta)=\frac{1}{m}\xi^{\sigma+2}f^{p-m}(\xi),
\end{equation}
where $\eta=\ln\,\xi$ is the new independent variable and $\alpha$ is given in \eqref{SS.exp}. Eq. \eqref{ODE.forward} is transformed, after direct calculations, into the quadratic dynamical system
\begin{equation}\label{PSsyst}
\left\{\begin{array}{ll}\dot{X}=X(2+(1-m)Y), \\ \dot{Y}=-X-(N-2)Y-Z-mY^2-\frac{p-m}{\sigma+2}XY, \\ \dot{Z}=Z(\sigma+2+(p-m)Y).\end{array}\right.
\end{equation}
We thus analyze the orbits in the phase space associated to the system \eqref{PSsyst} and extract from them information about the profiles and their tail behavior. To this end, one needs to study the critical points of the system, both finite and infinite. At some points we will be rather brief, as many technical details in the forthcoming analysis are identical to the ones already given in great detail in \cite{IMS23b}. Notice first that in our range of interest (non-negative profiles) we have $X\geq0$, $Z\geq0$ and the planes $\{X=0\}$ and $\{Z=0\}$ are invariant for the system \eqref{PSsyst}.

\bigskip

\noindent \textbf{Finite critical points of the system \eqref{PSsyst}}. A simple inspection of the system \eqref{PSsyst} shows that there are four finite critical points in the range $m\in(m_c,1)$, namely
\begin{equation*}
\begin{split}
P_0=(0,0,0), \ \ &P_1=\left(0,-\frac{N-2}{m},0\right), \ \ P_2=\left(0,-\frac{\sigma+2}{p-m},\frac{(N-2)(\sigma+2)(p-p_c(\sigma))}{(p-m)^2}\right),\\
&P_3=\left(\frac{2(\sigma+2)(mN-N+2)}{L(1-m)},-\frac{2}{1-m},0\right).
\end{split}
\end{equation*}
We give below the local analysis of these critical points when $m\in(m_c,1)$, mentioning that the point $P_2$ only exists for $p>p_c(\sigma)$.
\begin{lemma}\label{lem.P0}
In a neighborhood of the critical point $P_0$, the system \eqref{PSsyst} has a two-dimensional unstable manifold and a one-dimensional stable manifold. The profiles contained in orbits going out of $P_0$ satisfy as $\xi\to0$ either the local behavior \eqref{beh.P0f}.
\end{lemma}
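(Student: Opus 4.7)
The plan is to handle both halves of the statement simultaneously: I would first linearize \eqref{PSsyst} at $P_{0}$ and read the dimensions of the invariant manifolds off the spectrum of its Jacobian, and then translate the asymptotic shape of an orbit leaving $P_{0}$ through the inverse of the change of variables \eqref{PSchange} in order to obtain \eqref{beh.P0f}. The Jacobian $J$ of \eqref{PSsyst} at $P_{0}=(0,0,0)$ receives no contribution from the quadratic terms, so it is triangular with diagonal $(2,-(N-2),\sigma+2)$; under the standing assumptions $N\geq3$ and $\sigma>-2$ exactly two eigenvalues are positive and one is negative, hence the stable manifold theorem yields a two-dimensional $C^{1}$ unstable manifold $W^{u}_{0}$ and a one-dimensional $C^{1}$ stable manifold at $P_{0}$, which settles the first assertion. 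For $\sigma\neq 0$ the unstable eigenvectors are readily computed as $v_{1}=(N,-1,0)$ for the eigenvalue $2$ and $v_{3}=(0,-1,N+\sigma)$ for the eigenvalue $\sigma+2$.

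To obtain \eqref{beh.P0f} I would split into three cases. For $\sigma>0$ the eigenvalue $2$ is the smaller one, hence a generic orbit on $W^{u}_{0}$ leaves $P_{0}$ tangent to $v_{1}$: $(X,Y)\sim c\,e^{2\eta}(N,-1)$ with $Z=o(e^{2\eta})$ as $\eta\to-\infty$. Inverting \eqref{PSchange} gives $f(0)=D^{-1/(1-m)}\in(0,\infty)$ and $f'(0)=0$, forcing the ansatz $f(\xi)=[D+A\xi^{2}]^{-1/(1-m)}$; inserting it in \eqref{ODE.forward} and matching at the $O(1)$ level as $\xi\to 0$ (where $\xi^{\sigma}f^{p}=o(1)$ for $\sigma>0$) determines $A=\alpha(1-m)/(2mN)$ and yields the first line of \eqref{beh.P0f}. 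For $\sigma\in(-2,0)$ the ordering of the unstable eigenvalues reverses, so a generic orbit leaves $P_{0}$ along $v_{3}$: $Y,Z\sim\xi^{\sigma+2}$ and $X=o(\xi^{\sigma+2})$; the scaling of $Y=\xi f'/f$ is then consistent with $f(\xi)=[D+B\xi^{\sigma+2}]^{-1/(p-m)}$, and matching the source-dominant balance in \eqref{ODE.forward} fixes $B=(p-m)/(m(N+\sigma)(\sigma+2))$, giving the third line of \eqref{beh.P0f}. Finally, for $\sigma=0$ the ansatz is again $f(\xi)=[D+A\xi^{2}]^{-1/(1-m)}$, but the source $f^{p}$ now contributes to the leading balance; the matching identity
\begin{equation*}
\frac{2mNA}{1-m}\,D^{-1/(1-m)}=\alpha\,D^{-1/(1-m)}+D^{-p/(1-m)}
\end{equation*}
determines $A$ in terms of $D$ as in the middle line of \eqref{beh.P0f}.

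The main obstacle I anticipate is precisely the resonance at $\sigma=0$, where the two unstable eigenvalues coincide: a priori this could produce a Jordan block and therefore logarithmic corrections that would spoil the power-law form \eqref{beh.P0f}. I would circumvent this by observing that at $\sigma=0$ the kernel of $J-2I$ is already the full plane $\{v_{1}+Nv_{2}+v_{3}=0\}$, so $J$ is diagonalizable on $W^{u}_{0}$ and acts there as a homothety of ratio $2$; no logarithmic factors appear, and all orbits on $W^{u}_{0}$ satisfy $(X,Y,Z)\sim e^{2\eta}(X_{\star},Y_{\star},Z_{\star})$ with $(X_{\star},Y_{\star},Z_{\star})$ in the above plane. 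A minor secondary point is to discard the orbits of $W^{u}_{0}$ lying entirely in the invariant planes $\{X=0\}$ or $\{Z=0\}$, which correspond to profiles with $f(0)=+\infty$ and therefore fall outside the family described by \eqref{beh.P0f}.
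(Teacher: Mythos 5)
Your proposal is correct and follows essentially the same route as the paper: linearize \eqref{PSsyst} at $P_0$ to read off the manifold dimensions, identify the dominant direction on the unstable manifold according to the sign of $\sigma$ (which is exactly the paper's relation $Z\sim CX^{(\sigma+2)/2}$ combined with $Y\sim -X/N-Z/(N+\sigma)$), and translate back to profiles; your explicit check that the Jacobian is diagonalizable at the resonance $\sigma=0$, so that no logarithmic corrections arise, is a welcome point the paper leaves implicit. Two small remarks: your matching identity for $\sigma=0$ yields $A=(1-m)\bigl(\alpha+D^{(p-1)/(m-1)}\bigr)/(2mN)$, which is the correct constant (the factor displayed in \eqref{beh.P0f} appears to carry a typo, with $\alpha$ in place of $\alpha^{-1}$ inside the parenthesis), and the orbits lying entirely in the invariant planes $\{X=0\}$ or $\{Z=0\}$ do not correspond to profiles with $f(0)=+\infty$ but to no admissible profiles at all, since $X$ and $Z$ vanish only where $f$ does; they serve in the paper only as limiting members of the family $(l_C)$.
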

\begin{proof}
The linearization of the system \eqref{PSsyst} in a neighborhood of $P_0$ has the matrix
$$
M(P_0)=\left(
         \begin{array}{ccc}
           2 & 0 & 0 \\
           -1 & -(N-2) & -1 \\
           0 & 0 & \sigma+2 \\
         \end{array}
       \right),
$$
with two positive eigenvalues $\lambda_1=2$ and $\lambda_3=\sigma+2$ and one negative eigenvalue $\lambda_2=-(N-2)$ under the assumption $N\geq3$. Since the second eigenvector is $e_2=(0,1,0)$, the stable manifold is contained in the $Y$ axis, while for the unstable manifold, in a first order approximation, we readily get from the first and third equation that
\begin{equation}\label{interm1}
Z(\eta)\sim CX^{(\sigma+2)/2}(\eta), \qquad {\rm as} \ \eta\to-\infty.
\end{equation}
Substituting \eqref{interm1} into the second equation and neglecting the quadratic terms, we get after integration that
\begin{equation}\label{interm2}
Y(\eta)\sim-\frac{X(\eta)}{N}-\frac{Z(\eta)}{N+\sigma}+C_1X(\eta)^{-(N-2)/2}, \qquad {\rm as} \ \eta\to-\infty.
\end{equation}
Since the trajectories are assumed to pass by $P_0$, we have to let $C_1=0$ in \eqref{interm2}. Notice then that \eqref{interm1} implies that
\begin{equation}\label{interm3}
\lim\limits_{\eta\to-\infty}\frac{Z(\eta)}{X(\eta)}=\left\{\begin{array}{lll}0, & {\rm if} \ \sigma>0,\\ C\in(0,\infty), & {\rm if} \ \sigma=0, \\ +\infty, & {\rm if} \ \sigma<0.\end{array}\right.
\end{equation}
We then go back to the profiles by keeping the dominating term among $X(\eta)$ and $Z(\eta)$ in \eqref{interm2}, according to \eqref{interm3}, and then undoing the change of variable \eqref{PSchange} and noticing that $\eta\to-\infty$ translates into $\xi\to0$. An integration over $(0,\xi)$ for $\xi>0$ leads to the local behavior \eqref{beh.P0f}.
\end{proof}
The local analysis of the points $P_1$ and $P_2$ follows below.
\begin{lemma}\label{lem.P1}
The critical point $P_1$ is an unstable node if $p\in(p_L(\sigma),p_c(\sigma))$ and has a two-dimensional unstable manifold contained in the invariant plane $\{Z=0\}$ and a one-dimensional stable manifold contained in the invariant plane $\{X=0\}$ if $p>p_c(\sigma)$. The orbits going out of it for $p\in(p_L(\sigma),p_c(\sigma))$ contain profiles presenting a vertical asymptote
\begin{equation}\label{beh.P1.super}
f(\xi)\sim C\xi^{-(N-2)/m}, \qquad {\rm as} \ \xi\to0, \qquad C>0.
\end{equation}
\end{lemma}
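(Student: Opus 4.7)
The plan is to linearize the system \eqref{PSsyst} at $P_1$, read off the eigenvalues, locate the (un)stable manifolds using the invariance of the coordinate planes $\{X=0\}$ and $\{Z=0\}$, and finally translate the limit $Y \to -(N-2)/m$ back into the announced local behavior of the profile $f$ at $\xi = 0$.

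First I would compute the Jacobian $M(P_1)$. Using the identities $2+(1-m)Y|_{P_1} = (mN-N+2)/m$, $-(N-2)-2mY|_{P_1} = N-2$ and $\sigma+2+(p-m)Y|_{P_1} = (m(\sigma+2)-(p-m)(N-2))/m$, the matrix $M(P_1)$ becomes lower triangular in the ordering $(X,Y,Z)$, with diagonal entries
\begin{equation*}
\lambda_1 = \frac{mN-N+2}{m}, \qquad \lambda_2 = N-2, \qquad \lambda_3 = \frac{m(\sigma+2)-(p-m)(N-2)}{m}.
\end{equation*}
Since $m > m_c = (N-2)/N$ and $N \geq 3$, we have $\lambda_1, \lambda_2 > 0$, while $\lambda_3$ vanishes exactly at $p = p_c(\sigma)$ and changes sign with $p-p_c(\sigma)$. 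This immediately yields that $P_1$ is an unstable node when $p \in (p_L(\sigma), p_c(\sigma))$.

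For $p > p_c(\sigma)$, I would next identify the invariant subspaces containing the manifolds. Since the first equation of \eqref{PSsyst} does not involve $Z$ and the third is proportional to $Z$, a short eigenvector computation shows that the eigenvectors for the positive eigenvalues $\lambda_1$ and $\lambda_2$ have zero $Z$-component, while the eigenvector for the negative eigenvalue $\lambda_3$ has zero $X$-component. Combined with the invariance of $\{Z=0\}$ and $\{X=0\}$ and the Stable/Unstable Manifold Theorem, this forces the two-dimensional unstable manifold to lie inside $\{Z=0\}$ and the one-dimensional stable manifold to lie inside $\{X=0\}$.

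To conclude, I would derive \eqref{beh.P1.super} by noting that along any orbit emanating from $P_1$ we have $Y(\eta) = \xi f'(\xi)/f(\xi) = \rd(\ln f)/\rd(\ln \xi) \to -(N-2)/m$ as $\eta=\ln\xi \to -\infty$; a direct integration then gives $\ln f(\xi) \sim -((N-2)/m) \ln \xi$ as $\xi \to 0$, which is exactly \eqref{beh.P1.super}. The only mildly delicate point is the invariance claim for $p > p_c(\sigma)$, where one must check that the eigendirections lie \emph{exactly} in the announced invariant planes rather than merely being tangent to them. The lower-triangular shape of $M(P_1)$ and the simplicity of the three eigenvalues make this an essentially routine linear-algebra verification, with the potential degeneracy $\lambda_1 = \lambda_2$ arising only on a codimension-one set of parameters and posing no serious obstacle.
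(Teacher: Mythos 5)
Your proposal is correct and follows essentially the same route as the paper: linearize at $P_1$, read the eigenvalues $\lambda_1=(mN-N+2)/m$, $\lambda_2=N-2$, $\lambda_3=(N-2)(p_c(\sigma)-p)/m$ off the diagonal, use the eigenvector structure together with the invariance of $\{X=0\}$ and $\{Z=0\}$, and recover \eqref{beh.P1.super} from $Y(\eta)\to-(N-2)/m$. One small slip: $M(P_1)$ is not lower triangular, since $\partial\dot Y/\partial Z=-1$ at $P_1$; nevertheless the zero pattern in the first and third rows still forces the characteristic polynomial to factor as the product of the diagonal entries, so your eigenvalue and eigenvector computations go through unchanged.
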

\begin{proof}
The linearization of the system \eqref{PSsyst} in a neighborhood of $P_1$ has the matrix
$$
M(P_1)=\left(
         \begin{array}{ccc}
           \frac{mN-N+2}{m} & 0 & 0 \\[1mm]
           \frac{(N-2)(p-p_c(\sigma))}{m(\sigma+2)} & N-2 & -1 \\[1mm]
           0 & 0 & \frac{(N-2)(p_c(\sigma)-p)}{m} \\
         \end{array}
       \right),
$$
with eigenvalues
$$
\lambda_1=\frac{mN-N+2}{m}>0, \qquad \lambda_2=N-2>0, \qquad \lambda_3=\frac{(N-2)(p_c(\sigma)-p)}{m}.
$$
Notice that, if $p\in(p_L(\sigma),p_c(\sigma))$ then the three eigenvalues are positive and we have an unstable node. We then readily deduce the local behavior \eqref{beh.P1.super} by noticing that $\lim\limits_{\eta\to-\infty}Y(\eta)=-(N-2)/m$ and undoing the change of variable \eqref{PSchange}. On the contrary, if $p>p_c(\sigma)$, then $\lambda_3<0$ and a simple calculation of the eigenvectors of the matrix $M(P_1)$, together with the invariance of the planes $\{X=0\}$ and $\{Z=0\}$, lead to the conclusion in this case.
\end{proof}
\begin{lemma}\label{lem.P2}
Let $p>p_c(\sigma)$. Then the critical point $P_2$ is
\begin{itemize}
\item an unstable node or focus if $p\in(p_c(\sigma),p_s(\sigma))$.
\item a saddle point with a stable two-dimensional manifold contained in the invariant plane $\{X=0\}$ and a unique orbit going out of it if $p>p_s(\sigma)$.
\end{itemize}
In any of these two cases, the profiles contained in the unstable manifold of the point $P_2$ present a vertical asymptote in the form
\begin{equation}\label{beh.P2.super}
f(\xi)\sim K\xi^{-(\sigma+2)/(p-m)}, \ {\rm as} \ \xi\to0, \qquad K=\left[\frac{m(\sigma+2)(N-2)(p-p_c(\sigma))}{(p-m)^2}\right]^{1/(p-m)}.
\end{equation}
\end{lemma}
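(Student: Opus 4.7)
The plan is a standard linearization argument followed by translation back to profiles. First I would compute the Jacobian $M(P_2)$ of \eqref{PSsyst}, exploiting two cancellations at $Y_2 = -(\sigma+2)/(p-m)$: the entry $\sigma+2+(p-m)Y_2$ in the $\dot Z$ equation vanishes identically, and the entry $-1-\frac{p-m}{\sigma+2}Y_2$ in the $\dot Y$ equation also vanishes. These reduce $M(P_2)$ to block-triangular form, yielding a first eigenvalue
$$
\lambda_1 = 2+(1-m)Y_2 = \frac{L}{p-m}>0,
$$
with eigenvector $(1,0,0)$ pointing out of the invariant plane $\{X=0\}$, together with a $2\times 2$ block acting on perturbations within $\{X=0\}$.

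A direct computation rewrites the trace and determinant of this $2\times 2$ block as
$$
\mathrm{tr} = \frac{(N-2)(p_s(\sigma)-p)}{p-m}, \qquad \det = \frac{(N-2)(\sigma+2)(p-p_c(\sigma))}{p-m}.
$$
Since $p>p_c(\sigma)$ by assumption, $\det>0$, so the remaining two eigenvalues $\lambda_2,\lambda_3$ have real parts of common sign equal to $\mathrm{sgn}(\mathrm{tr})$ (and may be complex conjugate when $\mathrm{tr}^2-4\det<0$). For $p\in(p_c(\sigma),p_s(\sigma))$ we get $\mathrm{tr}>0$ and all three eigenvalues have positive real part, so $P_2$ is an unstable node or focus. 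For $p>p_s(\sigma)$ we get $\mathrm{tr}<0$ and the $2\times 2$ block contributes a two-dimensional stable manifold; because the corresponding eigenvectors annihilate the $X$-coordinate, this stable manifold is contained in the invariant plane $\{X=0\}$, while $\lambda_1>0$ contributes a one-dimensional unstable direction along the $X$-axis. This gives exactly the saddle structure claimed, with a single orbit going out of $P_2$ into $\{X>0\}$.

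To recover the local behavior \eqref{beh.P2.super}, I would observe that any orbit on the unstable manifold of $P_2$ satisfies $(X,Y,Z)(\eta)\to P_2$ as $\eta\to-\infty$, which corresponds to $\xi\to 0$. From $Y=\xi f'/f\to -(\sigma+2)/(p-m)$, integration yields $f(\xi)\sim K\xi^{-(\sigma+2)/(p-m)}$ as $\xi\to 0$ for some $K>0$. The constant $K$ is then fixed by imposing consistency with $Z=\xi^{\sigma+2}f^{p-m}/m\to Z_2$: substituting the power-law ansatz forces $K^{p-m}=mZ_2$, which reproduces exactly the value displayed in \eqref{beh.P2.super}.

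The main obstacle is the algebraic bookkeeping needed to expose the block-triangular structure of $M(P_2)$ and to rewrite $\mathrm{tr}$ and $\det$ in terms of $p_s(\sigma)$ and $p_c(\sigma)$; once that is done, the classification follows from the standard trace-determinant dichotomy and the stable-manifold theorem, and the profile asymptotics reduce to a one-line integration.
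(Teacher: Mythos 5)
Your proposal is correct and follows essentially the same route as the paper: the block structure of $M(P_2)$ with $\lambda_1=L/(p-m)>0$ along the $X$-direction, the sum/product (your trace/determinant) of the remaining pair $\lambda_2,\lambda_3$ changing the sign of their common real part exactly at $p=p_s(\sigma)$, and the asymptotics \eqref{beh.P2.super} read off from $Z(\eta)\to Z(P_2)$ as $\eta\to-\infty$. Your explicit fixing of $K$ via $K^{p-m}=mZ(P_2)$ is just a slightly more spelled-out version of the paper's final step.
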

\begin{proof}
The linearization of the system \eqref{PSsyst} in a neighborhood of the point $P_2$ has the matrix
$$
M(P_2)=\left(
         \begin{array}{ccc}
           \frac{L}{p-m} & 0 & 0 \\[1mm]
           0 & -\frac{(N-2)(p-p_s(\sigma))}{p-m} & -1 \\[1mm]
           0 & \frac{(N-2)(\sigma+2)(p-p_c(\sigma))}{p-m} & 0 \\
         \end{array}
       \right),
$$
where $L$ is defined in \eqref{const.L}, and whose eigenvalues satisfy
$$
\lambda_1=\frac{L}{p-m}>0, \ \lambda_2+\lambda_3=-\frac{(N-2)(p-p_s(\sigma))}{p-m}, \ \lambda_2\lambda_3=\frac{(N-2)(\sigma+2)(p-p_c(\sigma))}{p-m}>0.
$$
Notice that $\lambda_2+\lambda_3$ changes sign exactly at $p=p_s(\sigma)$, leading readily to the classification in the statement. Moreover, the orbits contained in the unstable manifold (either three- or one-dimensional) of this point have
\begin{equation}\label{zp2}
Z(\eta)\to Z(P_2):=\frac{(N-2)(\sigma+2)(p-p_c(\sigma))}{(p-m)^2}, \qquad {\rm as} \ \eta\to-\infty,
\end{equation}
whence we immediately get the local behavior \eqref{beh.P2.super} as $\xi\to0$.
\end{proof}

\noindent \textbf{Remark.} If $p>p_s(\sigma)$, the only orbit contained in the unstable manifold of $P_2$ is explicit, and it contains the stationary solution
\begin{equation*}
u(x)=K|x|^{-(\sigma+2)/(p-m)}, \qquad K=\left[\frac{m(\sigma+2)(N-2)(p-p_c(\sigma))}{(p-m)^2}\right]^{1/(p-m)}.
\end{equation*}

The critical point $P_3$ encloses a behavior which is specific to this range.
\begin{lemma}\label{lem.P3}
For any $m\in(m_c,1)$, the critical point $P_3$ is a saddle point with a two-dimensional stable manifold and a one-dimensional unstable manifold, the latter being fully contained in the invariant plane $\{Z=0\}$. The profiles contained in orbits belonging to the two-dimensional stable manifold present the local behavior \eqref{beh.P3} as $\xi\to\infty$.
\end{lemma}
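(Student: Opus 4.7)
The plan is a linearization at $P_3$, exploiting the fact that $P_3$ lies in the invariant plane $\{Z=0\}$. Since $Z_3=0$, both partials $\partial_X\dot Z$ and $\partial_Y\dot Z$ vanish at $P_3$, so the Jacobian $M(P_3)$ decouples into a scalar block along the $Z$ direction and a $2\times2$ block in $(X,Y)$. For the $Z$-block the eigenvalue is $\partial_Z\dot Z|_{P_3}=\sigma+2+(p-m)Y_3$, which, using $Y_3=-2/(1-m)$ and the elementary identity $(\sigma+2)(1-m)-2(p-m)=-L$, equals $-L/(1-m)$. Since $L>0$ in our range, this eigenvalue is strictly negative.

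For the $(X,Y)$-block, the key simplification is that $2+(1-m)Y_3=0$ at $P_3$, which kills the $(1,1)$ entry of the block. The block thus takes the form
\[
A=\begin{pmatrix} 0 & (1-m)X_3 \\[1mm] \dfrac{L}{(\sigma+2)(1-m)} & a_{22}\end{pmatrix},
\]
the $(2,1)$ entry being computed via $2(p-m)-(\sigma+2)(1-m)=L$, and the exact value of $a_{22}$ being irrelevant for what follows. The determinant of $A$ evaluates to $-LX_3/(\sigma+2)=-2(mN-N+2)/(1-m)$, and is strictly negative because $m>m_c$ is equivalent to $mN-N+2>0$. A $2\times 2$ matrix with negative determinant has two real eigenvalues of opposite signs, so $A$ contributes one positive and one negative eigenvalue. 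Altogether the spectrum at $P_3$ is $(+,-,-)$, giving a saddle with a one-dimensional unstable manifold and a two-dimensional stable manifold. Moreover, the positive eigendirection sits inside the $(X,Y)$-block and hence has zero $Z$-component; together with the invariance of $\{Z=0\}$, this forces the unstable manifold to lie entirely in that plane.

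To extract the tail behavior, I would then observe that any orbit on the two-dimensional stable manifold approaches $P_3$ as $\eta\to+\infty$, i.e., as $\xi\to\infty$. The limit $Y(\eta)\to-2/(1-m)$ integrates to $f(\xi)\sim C\xi^{-2/(1-m)}$ for some $C>0$, and matching the constant through $X(\eta)\to X_3$ in \eqref{PSchange}, using $\alpha=(\sigma+2)/L$, yields
\[
C^{1-m}=\frac{mX_3}{\alpha}=\frac{2m(mN-N+2)}{1-m},
\]
which is precisely \eqref{beh.P3}.

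The only delicate point is handling cleanly the two algebraic identities involving $L$ that collapse the Jacobian entries; once these are verified, the rest reduces to elementary $2\times2$ linear algebra and to undoing the change of variables \eqref{PSchange}. Notably, hyperbolicity of $P_3$ across the entire supercritical range $m\in(m_c,1)$ is built into the sign of $mN-N+2$, and degenerates exactly at $m=m_c$, which is why the critical case $m=m_c$ (and the logarithmic correction \eqref{beh.P3crit}) has to be handled separately.
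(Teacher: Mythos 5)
Your proposal is correct and follows essentially the same route as the paper: linearize at $P_3$, use the vanishing of $2+(1-m)Y_3$ and the identity $2(p-m)-(\sigma+2)(1-m)=L$ to get the block structure, read off the eigenvalue signs from $\lambda_3=-L/(1-m)<0$ and $\det A=-2(mN-N+2)/(1-m)<0$, place the unstable direction in $\{Z=0\}$, and recover \eqref{beh.P3} from $X(\eta)\to X(P_3)$ via \eqref{PSchange}. All the algebra checks out, including the matching of the constant in \eqref{beh.P3}.
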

\begin{proof}
The linearization of the system \eqref{PSsyst} in a neighborhood of $P_3$ has the matrix
$$\left(
  \begin{array}{ccc}
    0 & \frac{2(\sigma+2)(mN-N+2)}{L} & 0 \\[1mm]
    \frac{L}{(1-m)(\sigma+2)} & A(m,N,p,\sigma) & -1 \\[1mm]
    0 & 0 & -\frac{L}{1-m} \\
  \end{array}
\right),$$
where $L>0$ is defined in \eqref{const.L} and
$$
A(m,N,p,\sigma)=\frac{(1-m)^2N(\sigma+2)+2(m^2-1)\sigma+4(mp-1)}{L(1-m)}.
$$
We observe that the eigenvalues of $M(P_3)$ satisfy
$$
\lambda_3=-\frac{L}{1-m}<0, \qquad \lambda_1+\lambda_2=A(m,N,p,\sigma), \qquad \lambda_1\lambda_2=-\frac{2(mN-N+2)}{1-m}<0,
$$
hence there are in any case two negative eigenvalues and a positive one. Moreover, it is obvious that the eigenvectors corresponding to both $\lambda_1$ and $\lambda_2$ (hence, of the unique positive eigenvalue) are contained in the invariant plane $\{Z=0\}$, which gives that the whole (unique) unstable orbit stays in this plane. Finally, the two-dimensional stable manifold contains profiles such that
$$
X(\eta)\to X(P_3):=\frac{2(\sigma+2)(mN-N+2)}{L(1-m)}, \qquad {\rm as} \ \eta\to\infty,
$$
which easily gives \eqref{beh.P3} as $\xi\to\infty$ by undoing the change of variable \eqref{PSchange}.
\end{proof}

\medskip

\noindent \textbf{Critical points at infinity for the system \eqref{PSsyst}.} The critical points at infinity of the system \eqref{PSsyst} are analyzed via the Poincar\'e hypersphere. To this end, we follow the theory in \cite[Section 3.10]{Pe} and let
$$
X=\frac{\overline{X}}{W}, \qquad Y=\frac{\overline{Y}}{W}, \qquad Z=\frac{\overline{Z}}{W}.
$$
The critical points at space infinity, expressed in the new variables $(\overline{X},\overline{Y},\overline{Z},W)$, are then given as in \cite[Theorem 4, Section 3.10]{Pe} by the following system
\begin{equation*}
\left\{\begin{array}{ll}\frac{1}{\sigma+2}\overline{X}\overline{Y}[(p-m)\overline{X}-(\sigma+2)\overline{Y}]=0,\\
(p-1)\overline{X}\overline{Z}\overline{Y}=0,\\
\frac{1}{\sigma+2}\overline{Y}\overline{Z}[p(\sigma+2)\overline{Y}-(p-m)\overline{X}]=0,\end{array}\right.
\end{equation*}
together with the condition of belonging to the equator of the hypersphere, which leads to $W=0$ and the additional equation $\overline{X}^2+\overline{Y}^2+\overline{Z}^2=1$. Since $\overline{X}\geq0$ and $\overline{Z}\geq0$, we find the following critical points at infinity:
\begin{equation*}
\begin{split}
&Q_1=(1,0,0,0), \ \ Q_{2,3}=(0,\pm1,0,0), \ \ Q_4=(0,0,1,0), \ \ Q_{\gamma}=\left(\gamma,0,\sqrt{1-\gamma^2},0\right),\\
&Q_5=\left(\frac{\sigma+2}{\sqrt{(\sigma+2)^2+(p-m)^2}},\frac{p-m}{\sqrt{(\sigma+2)^2+(p-m)^2}},0,0\right),
\end{split}
\end{equation*}
with $\gamma\in(0,1)$. We analyze these critical points below, being rather sketchy and referring the reader to details and calculations given in \cite{IMS23b} where it applies. For the critical points $Q_1$, $Q_5$ and $Q_{\gamma}$, we employ the system obtained following the projection on the $X$ variable according to \cite[Theorem 5(a), Section 3.10]{Pe}, namely
\begin{equation}\label{PSinf1}
\left\{\begin{array}{ll}\dot{x}=x[(m-1)y-2x],\\
\dot{y}=-y^2-\frac{p-m}{\sigma+2}y-x-Nxy-xz,\\
\dot{z}=z[(p-1)y+\sigma x],\end{array}\right.
\end{equation}
where the variables expressed in lowercase letters are obtained from the original ones by the following change of variable
\begin{equation}\label{change2}
x=\frac{1}{X}, \qquad y=\frac{Y}{X}, \qquad z=\frac{Z}{X},
\end{equation}
and the independent variable with respect to which derivatives are taken in \eqref{PSinf1} is defined implicitly by the differential equation
\begin{equation*}
\frac{d\eta_1}{d\xi}=\frac{\alpha}{m}\xi f^{1-m}(\xi).
\end{equation*}
In this system, the critical points $Q_1$, $Q_5$ and $Q_{\gamma}$ are identified as
$$
Q_1=(0,0,0), \ \ Q_5=\left(0,-\frac{p-m}{\sigma+2},0\right), \ \ Q_{\gamma}=(0,0,\kappa), \ \ {\rm with} \ \kappa=\frac{\sqrt{1-\gamma^2}}{\gamma}\in(0,\infty).
$$
We gather the local analysis in a neighborhood of them in the following technical result.
\begin{lemma}\label{lem.Q1Q5Qg}
\begin{enumerate}
  \item The critical point $Q_1$ has a two-dimensional center manifold on which the flow is stable, and a one-dimensional stable manifold. It behaves as a stable node for orbits arriving from the finite part of the phase space associated to the system \eqref{PSsyst}. The corresponding profiles present the local behavior \eqref{beh.Q1f} as $\xi\to\infty$.
  \item The critical point $Q_5$ is a saddle point with a two-dimensional unstable manifold fully contained in the invariant plane $\{z=0\}$ of the system \eqref{PSinf1}, that is, also in the invariant plane $\{Z=0\}$ of the system \eqref{PSsyst}, and a one-dimensional stable manifold contained in the invariant plane $\{x=0\}$ of the system \eqref{PSinf1}. There are no profiles contained in these orbits.
  \item For any $\gamma\in(0,1)$, the critical point $Q_{\gamma}$ only admits orbits fully contained in the invariant plane $\{x=0\}$ of the system \eqref{PSinf1}.
\end{enumerate}
\end{lemma}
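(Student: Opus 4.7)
The plan is to perform, for each of the three critical points, a linearization of the projected system \eqref{PSinf1}, and then to exploit the invariance of the planes $\{x=0\}$ and $\{z=0\}$ (inherited from that of $\{X=0\}$ and $\{Z=0\}$ for \eqref{PSsyst}) in order to promote the tangent stable and unstable subspaces to global manifolds. Since two of the three Jacobians will be degenerate, I would supplement the linear analysis with a center manifold reduction at those points. The translation from the dynamics near each critical point into the corresponding profile behavior is then carried out by undoing the change of variables \eqref{change2} combined with \eqref{PSchange}, exactly as done for the finite critical points earlier in this section.

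At $Q_{1}=(0,0,0)$ the Jacobian has eigenvalues $0$, $-(p-m)/(\sigma+2)$ and $0$; the negative one, with eigenvector $(0,1,0)$, produces the one-dimensional stable manifold along the $y$-axis. For the two-dimensional center manifold I would impose the slow constraint $\dot{y}\approx 0$, which to leading order forces $y\approx-\frac{\sigma+2}{p-m}\,x$. Substituting back in the first and third equations gives the reduced flow $\dot{x}\sim-\frac{L}{p-m}x^{2}$, $\dot{z}\sim-\frac{L}{p-m}xz$, and since $L>0$ this shows that $x,z\to 0^{+}$ as $\eta_{1}\to\infty$; in particular $Q_{1}$ attracts all trajectories coming from the finite part of the phase space with $x,z>0$. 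Undoing \eqref{change2}, the relation $y\to-\frac{\sigma+2}{p-m}x$ translates into $Y\to-(\sigma+2)/(p-m)$, and integrating $\xi f'(\xi)/f(\xi)=-(\sigma+2)/(p-m)$ produces the decay \eqref{beh.Q1f}.

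At $Q_{5}$ the Jacobian is block lower-triangular, with a $2\times 2$ block acting on $(x,y)$ and a $1\times 1$ block on $z$. The three eigenvalues can be read off directly, namely $\frac{(1-m)(p-m)}{\sigma+2}>0$, $\frac{p-m}{\sigma+2}>0$ and $-\frac{(p-1)(p-m)}{\sigma+2}<0$; a short computation shows that the two unstable eigenvectors have vanishing third component and thus lie in the invariant plane $\{z=0\}$, while the stable eigenvector is $(0,0,1)\in\{x=0\}$. The invariance of these two planes automatically promotes these tangencies to the global inclusions stated in item $2$. The absence of profiles along these orbits follows because $\{z=0\}$ corresponds to $Z\equiv 0$ in the original variables and hence forces $f\equiv 0$, while on the stable manifold the limit $X\to\infty$ with $Y/X$ approaching a negative constant is incompatible with a non-trivial positive profile, by an argument paralleling the one already given in \cite{IMS23b}.

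The main obstacle is the analysis at $Q_{\gamma}=(0,0,\kappa)$. The Jacobian there has eigenvalues $0$, $-(p-m)/(\sigma+2)$ and $0$, with the null eigenvalue of algebraic multiplicity two but geometric multiplicity one; the structural reason is that the whole $z$-axis $\{x=y=0\}$ consists of equilibria of \eqref{PSinf1}, so no standard hyperbolic argument can be applied transverse to it. I would again reduce via $\dot{y}\approx 0$, now producing $y\approx-\frac{(\sigma+2)(1+z)}{p-m}x$, and obtain the reduced flow $\dot{x}\sim c(z)x^{2}$, $\dot{z}\sim d(z)xz$ with explicit coefficients $c(z)$, $d(z)$. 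Dividing the two equations near $Q_{\gamma}$ gives $dz/z\sim\frac{d(\kappa)}{c(\kappa)}dx/x$, so that any trajectory with $x\to 0^{+}$ satisfies $z\sim z_{0}(x/x_{0})^{d(\kappa)/c(\kappa)}$; the required limit $z\to\kappa>0$ is then impossible unless $d(\kappa)=0$, and a short separate argument at the (at most one) exceptional value of $\kappa$ where $d$ vanishes rules out the remaining case, forcing every orbit meeting $Q_{\gamma}$ to lie entirely in $\{x=0\}$.
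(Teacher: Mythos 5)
Your proposal is correct and follows essentially the same route as the paper: a center manifold reduction at $Q_1$ (the paper uses the formal change of variable $w=\tfrac{p-m}{\sigma+2}y+x$ where you impose the equivalent slow constraint $\dot y\approx 0$, arriving at the same reduced flow $\dot x\sim-\tfrac{L}{p-m}x^2$, $\dot z\sim-\tfrac{L}{p-m}xz$), direct inspection of the eigenvalues and eigenvectors of the triangular Jacobian at $Q_5$, and, for $Q_\gamma$, the identification of the unique exceptional value $\kappa=-L/((p-1)(\sigma+2))<0$ at which a connection not contained in $\{x=0\}$ could occur, which is excluded since $\gamma\in(0,1)$ forces $\kappa>0$. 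Your ratio argument $dz/z\sim (d(\kappa)/c(\kappa))\,dx/x$ for $Q_\gamma$ is a slightly more self-contained version of the computation the paper defers to \cite{ILS23, IMS23}, and it yields exactly the same critical $\kappa$.
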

\begin{proof}
\textbf{1.} The linearization of the system \eqref{PSinf1} in a neighborhood of $Q_1$ has the matrix
$$M(Q_1)=\left(
         \begin{array}{ccc}
           0 & 0 & 0 \\[1mm]
           1 & -\frac{p-m}{\sigma+2} & 0 \\[1mm]
           0 & 0 & 0 \\
         \end{array}
       \right).
$$
We analyze the center manifolds of $Q_1$ by further introducing the change of variable
$$
w=\frac{p-m}{\sigma+2}y+x,
$$
in order to put the system \eqref{PSinf1} in the canonical form for an application of the center manifold theory according to \cite{Carr} (see also \cite[Section 2.12]{Pe}), which in our case writes
\begin{equation*}
\left\{\begin{array}{ll}\dot{x}=-\frac{1}{\beta}x^2+\frac{(m-1)\alpha}{\beta}xw,\\[1mm]
\dot{w}=-\frac{\beta}{\alpha}w-\frac{\alpha}{\beta}w^2-\frac{\beta}{\alpha}xz-\frac{(m+1)\alpha-N\beta}{\beta}xw-\frac{m\alpha-(N-2)\beta}{\beta}x^2,\\[1mm]
\dot{z}=-\frac{1}{\beta}xz+\frac{(p-1)\alpha}{\beta}zw.\end{array}\right.
\end{equation*}
Following the calculations given in \cite[Lemma 2.1]{IMS23} (see also \cite[Lemma 3.1]{IMS23b}), we get the equation of the center manifold of $Q_1$ as
\begin{equation*}
w=-\frac{(\sigma+2)(N-2)(p_c(\sigma)-p)}{(p-m)^2}x^2-xz+o(|(x,z)|^2),
\end{equation*}
while the flow on the center manifold is given by the reduced system (according to \cite[Theorem 2, Section 2.4]{Carr})
\begin{equation*}
\left\{\begin{array}{ll}\dot{x}&=-\frac{1}{\beta}x^2+x^2O(|(x,z)|),\\[1mm]
\dot{z}&=-\frac{1}{\beta}xz+xO(|(x,z)|^2),\end{array}\right.
\end{equation*}
We thus readily notice that the center manifolds behave as stable ones in a neighborhood of the point $Q_1$, which, together with the stable manifold generated by the only nonzero eigenvalue of $M(Q_1)$, give the conclusion.

\medskip

\textbf{2.} The linearization of the system \eqref{PSinf1} in a neighborhood of the critical point $Q_5$ has the matrix
$$
M(Q_5)=\left(
         \begin{array}{ccc}
           \frac{(1-m)(p-m)}{\sigma+2} & 0 & 0 \\[1mm]
           -1+\frac{N(p-m)}{\sigma+2} & \frac{p-m}{\sigma+2} & 0 \\[1mm]
           0 & 0 & -\frac{(p-m)(p-1)}{\sigma+2} \\
         \end{array}
       \right),
$$
and the conclusion follows readily by just inspecting the sign of the three eigenvalues of $M(Q_5)$ and their corresponding eigenvectors.

\medskip

\textbf{3.} At a formal level, one can see this fact in the following way: if there is an orbit connecting to some point $Q_{\gamma}$ in any possible way (I mean, belonging to
either a stable or unstable manifold), then along such orbit we have $z(\eta_1)\to\kappa\in(0,\infty)$ as either $\eta_1\to-\infty$ or $\eta_1\to\infty$, which, by undoing the changes of variable \eqref{change2} and \eqref{PSchange}, leads to profiles with the local behavior
\begin{equation}\label{interm5}
f(\xi)\sim C\xi^{-\sigma/(p-1)}, \qquad {\rm either \ as} \ \xi\to0 \ {\rm or \ as} \ \xi\to\infty,
\end{equation}
but plugging the ansatz \eqref{interm5} into the equation \eqref{ODE.forward} leads readily to a contradiction, as the dominating terms cannot be compensated in any of the two cases. A rigorous proof is rather tedious, but it follows analogous lines as the one performed for the similar critical point in \cite[Lemma 2.4]{ILS23} or \cite[Lemma 2.3]{IMS23}. Adapting those calculations we are led to the choice of $\kappa\in(0,\infty)$ for which orbits connecting to $Q_{\gamma}$ not contained in the invariant plane $\{x=0\}$ might exist, which is
$$
\frac{\sqrt{1-\gamma^2}}{\gamma}=\kappa=-\frac{L}{(p-1)(\sigma+2)}<0,
$$
hence there is no $Q_{\gamma}$ with $\gamma>0$ admitting such orbits.
\end{proof}
For the critical points $Q_2$ and $Q_3$, we employ the system obtained following the projection on the $Y$ variable according to \cite[Theorem 5(b), Section 3.10]{Pe}, namely
\begin{equation}\label{PSinf2}
\left\{\begin{array}{lll}\pm\dot{x}=-x-Nxw-\frac{p-m}{\sigma+2}x^2-x^2w-xzw,\\[1mm]
\pm\dot{z}=-pz-(N+\sigma)zw-\frac{p-m}{\sigma+2}xz-xzw-z^2w,\\[1mm]
\pm\dot{w}=-mw-(N-2)w^2-\frac{p-m}{\sigma+2}xw-xw^2-zw^2,\end{array}\right.
\end{equation}
where the new variables $x$, $z$, $w$ are obtained from the original variables of the system \eqref{PSsyst} as follows:
\begin{equation*}
x=\frac{X}{Y}, \qquad z=\frac{Z}{Y}, \qquad w=\frac{1}{Y}.
\end{equation*}
Thus, the flow of the system \eqref{PSsyst} in a neighborhood of the critical points $Q_2$ and $Q_3$ is topologically equivalent to the flow of the system \eqref{PSinf2} choosing the minus sign (for $Q_2$), respectively choosing the plus sign (for $Q_3$) in a neighborhood of the origin.
\begin{lemma}\label{lem.Q23}
The critical points $Q_2$ and $Q_3$ are, respectively, an unstable node and a stable node. The orbits going out of $Q_2$ correspond to profiles $f(\xi)$ such that there exists $\xi_0\in(0,\infty)$ and $\delta>0$ for which
\begin{equation*}
f(\xi_0)=0, \qquad (f^m)'(\xi_0)=C>0, \qquad f>0 \ {\rm on} \ (\xi_0,\xi_0+\delta),
\end{equation*}
while the orbits entering the stable node $Q_3$ correspond to profiles $f(\xi)$ such that there exists $\xi_0\in(0,\infty)$ and $\delta\in(0,\xi_0)$ for which
\begin{equation*}
f(\xi_0)=0, \qquad (f^m)'(\xi_0)=-C<0, \qquad f>0 \ {\rm on} \ (\xi_0-\delta,\xi_0).
\end{equation*}
\end{lemma}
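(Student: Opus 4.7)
The plan is to perform a standard hyperbolic analysis of the system \eqref{PSinf2} at its origin and then translate the resulting local flow into information about $(\xi,f)$. The Jacobian of \eqref{PSinf2} at the origin is the diagonal matrix $\pm\mathrm{diag}(-1,-p,-m)$. For the minus sign, corresponding to $Q_2$, the eigenvalues $1$, $p$, $m$ are strictly positive, yielding a hyperbolic unstable node; for the plus sign, corresponding to $Q_3$, the eigenvalues $-1$, $-p$, $-m$ are strictly negative, yielding a hyperbolic stable node. By the Hartman--Grobman theorem the local flow is topologically conjugate to its linear part, providing a three-dimensional family of orbits leaving $Q_2$ and a three-dimensional family entering $Q_3$.

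\textbf{Reaching a finite $\xi_0\in(0,\infty)$.} Comparing $dw/d\eta$ computed from the $Y$-equation of \eqref{PSsyst} with the third equation of \eqref{PSinf2} I would derive the time reparametrization $d\eta_2/d\eta=\mp Y$, where the upper sign refers to $Q_2$ and the lower to $Q_3$. On the unstable manifold of $Q_2$, the dominant linear behavior $w(\eta_2)\sim w_0 e^{m\eta_2}$ with $w_0>0$ as $\eta_2\to-\infty$ gives $d\eta/d\eta_2 = w$, which is integrable at $-\infty$; hence $\eta\to\eta_0\in\real$ and $\xi=e^{\eta}\to\xi_0\in(0,\infty)$. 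The analogous calculation for $Q_3$ (with $w_0<0$ and $\eta_2\to+\infty$) yields the same conclusion. Moreover $d\eta/d\eta_2>0$ in both cases, so the orbits live on $(\xi_0,\xi_0+\delta)$ near $Q_2$ and on $(\xi_0-\delta,\xi_0)$ near $Q_3$ for some $\delta>0$.

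\textbf{Local structure of the profile at $\xi_0$.} Along an orbit connecting to $Q_2$ we have $Y=\xi f'/f\to+\infty$, while $X=(\alpha/m)\xi^2 f^{1-m}\to 0$; since $\xi_0>0$ and $1-m>0$, this forces $f(\xi_0)=0$ with $f$ positive and increasing just to the right of $\xi_0$. Inserting the ansatz $f^m(\xi)=A(\xi-\xi_0)+B(\xi-\xi_0)^2+o((\xi-\xi_0)^2)$ into \eqref{ODE.forward}, and noting that $f\sim(\xi-\xi_0)^{1/m}$, $f'\sim(\xi-\xi_0)^{1/m-1}$ with $1/m-1>0$, the terms $\alpha f$, $\beta\xi f'$, $\xi^\sigma f^p$ all vanish at $\xi_0$; matching the next order determines $B=-(N-1)A/(2\xi_0)$ and leaves $A=(f^m)'(\xi_0)>0$ as the free positive parameter. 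The mirror analysis for $Q_3$ yields $(f^m)'(\xi_0)=-C<0$ with $f>0$ on $(\xi_0-\delta,\xi_0)$.

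\textbf{Main obstacle.} The technically most delicate piece is the time-rescaling argument in the second paragraph, which secures that $Q_2$ and $Q_3$ are reached in a \emph{finite}, strictly positive value of $\xi$; once this is settled, the node classification is immediate from the diagonal form of the Jacobian, and the behavior of the profile follows from a short consistency check inside \eqref{ODE.forward}.
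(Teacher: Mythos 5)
Your proposal is correct and follows essentially the same route as the paper, which does not write out the argument but defers to \cite[Lemma 2.6]{IS21}: linearize \eqref{PSinf2} at the origin to get the node classification, use the time reparametrization $d\eta/d\eta_2=\mp w$ with $w=O(e^{m\eta_2})$ to see that $\xi$ tends to a finite positive $\xi_0$, and match the expansion $f^m(\xi)\sim A(\xi-\xi_0)$ in \eqref{ODE.forward}. The only point worth making explicit is that $X=x/w\to 0$ (hence $f(\xi_0)=0$) because the $x$- and $w$-eigenvalues satisfy $1>m$, so $x$ decays faster than $w$ along orbits leaving $Q_2$; otherwise the argument is complete.
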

The proof is rather easy and goes similarly to the one of \cite[Lemma 2.6]{IS21} to which we refer. We are thus left only with the critical point $Q_4$, whose analysis is done following the same recipe as in \cite[Section 4 and Appendix]{IMS23b}. On the one hand, the following technical result holds true:
\begin{lemma}\label{lem.Q4}
There are no solutions to Eq. \eqref{ODE.forward} satisfying simultaneously the following limits
\begin{equation*}
\begin{split}
&\xi^{\sigma}f(\xi)^{p-1}\to\infty, \qquad \xi^{\sigma+2}f(\xi)^{p-m}\to\infty, \qquad \xi^{\sigma-2}f(\xi)^{m+p-2}\to\infty, \\
&\xi^{\sigma+1}f(\xi)^{p-m+1}(f')^{-1}(\xi)\to\pm\infty,
\end{split}
\end{equation*}
the limits above being taken in any of the possible cases as $\xi\to0$, $\xi\to\xi_0\in(0,\infty)$ or $\xi\to\infty$.
\end{lemma}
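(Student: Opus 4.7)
The plan is to argue by contradiction: assume that a profile $f$ solving \eqref{ODE.forward} satisfies the four listed asymptotic conditions simultaneously as $\xi\to\xi_*$, with $\xi_*$ being one of $0$, $\xi_0\in(0,\infty)$, or $+\infty$. A direct calculation translates each condition into the phase variables \eqref{PSchange}: $\xi^\sigma f^{p-1}\to\infty$ is equivalent to $Z/X\to\infty$; $\xi^{\sigma+2}f^{p-m}\to\infty$ to $Z\to\infty$; $\xi^{\sigma+1}f^{p-m+1}/f'\to\pm\infty$ to $Y/Z\to 0$; and $\xi^{\sigma-2}f^{m+p-2}\to\infty$ to $Z/X^2\to\infty$. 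Taken together these say exactly that the orbit of \eqref{PSsyst} corresponding to $f$ must flow into the critical point $Q_4=(0,0,1,0)$ of the Poincar\'e hypersphere, the only infinite critical point left unresolved after Lemmas \ref{lem.Q1Q5Qg} and \ref{lem.Q23}.

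To make the analysis tractable, I would introduce the coordinate chart obtained by projecting on $Z$, namely
\begin{equation*}
x=\frac{X}{Z},\qquad y=\frac{Y}{Z},\qquad w=\frac{1}{Z},\qquad \frac{d\eta_4}{d\eta}=Z,
\end{equation*}
which converts \eqref{PSsyst} into an explicit polynomial system in $(x,y,w)$ with $Q_4$ located at the origin. The linearization there is highly degenerate (all eigenvalues vanish along the directions of interest), so instead of attempting a center manifold reduction I would argue by a direct leading-order balance at the level of equation \eqref{ODE.forward} itself, following the blueprint laid down in \cite[Section 4 and Appendix]{IMS23b}. The four hypotheses supply precisely the dominance relations needed: the first, fourth, and third respectively express that the source term $\xi^\sigma f^p$ dominates the terms $\alpha f$, $\beta\xi f'$, and the diffusive contribution $(f^m)''+(N-1)\xi^{-1}(f^m)'$ (whose natural scale is $\xi^{-2}f^m$). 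Plugging these bounds back into \eqref{ODE.forward} forces the asymptotic identity $\xi^\sigma f^p(\xi)\sim 0$ as $\xi\to\xi_*$, which is absurd since $f>0$.

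The main obstacle, and the reason why this step occupies a lengthy appendix in \cite{IMS23b}, is turning the pointwise ratio conditions into honest bounds. The fourth hypothesis only controls $|f'|/(\xi^{\sigma+1} f^{p-m+1})$ through a one-sided limit and does not a priori preclude oscillations of $f'$ near $\xi_*$; similarly, the third hypothesis subsumes two different contributions, $mf^{m-1}f''$ and $m(m-1)f^{m-2}(f')^2$, to $(f^m)''$, and these need to be bounded separately. These difficulties are handled by splitting cases over the three possible values of $\xi_*$ and by a careful sign and monotonicity analysis of the phase trajectory in the chart above, in the spirit of \cite[Lemma 2.4]{ILS23}. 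Once this bookkeeping is carried out, the dominance becomes rigorous and the contradiction closes.
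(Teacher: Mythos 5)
Your translation of the four hypotheses into the phase variables \eqref{PSchange} is correct (they read $Z/X\to\infty$, $Z\to\infty$, $Y/Z\to 0$ and $Z/X^2\to\infty$, respectively), and the overall strategy --- contradiction by showing that the source term must dominate every other term of \eqref{ODE.forward} --- is the natural one. Note, for calibration, that the paper gives no proof of this lemma at all: it defers verbatim to \cite[Lemma 7.2 and Appendix]{IMS23b}, so your sketch has to stand on its own.

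It does not, because the central claim that ``the four hypotheses supply precisely the dominance relations needed'' is false as stated. Dividing \eqref{ODE.forward} by $\xi^{\sigma}f^{p}$, the first hypothesis controls $\alpha f/(\xi^{\sigma}f^{p})=X/Z$, and the fourth controls $(N-1)\xi^{-1}(f^m)'/(\xi^{\sigma}f^{p})$, which is a constant multiple of $Y/Z$ --- \emph{not} the term $\beta\xi f'$ that you assign to it. Indeed
\begin{equation*}
\frac{\beta\xi f'(\xi)}{\xi^{\sigma}f^{p}(\xi)}=\frac{p-m}{\sigma+2}\,\frac{XY}{Z},
\end{equation*}
and no combination of the four hypotheses controls $XY/Z$: they give $X^{2}/Z\to 0$ and $Y/Z\to 0$, but $XY/Z=(X/\sqrt{Z})\cdot(Y/Z)\sqrt{Z}$ and the last factor may diverge since $Z\to\infty$. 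Similarly, $(f^m)''/(\xi^{\sigma}f^{p})=(\dot{Y}-Y+mY^{2})/Z$, and neither $Y^{2}/Z$ nor $\dot{Y}/Z$ is controlled; your appeal to the ``natural scale $\xi^{-2}f^m$'' of the diffusive block presupposes $f'\sim f/\xi$, which is precisely what is unknown here (and the ratio $\xi^{-2}f^{m}/(\xi^{\sigma}f^{p})\to 0$ is your second hypothesis, not the third --- the third is the strictly stronger statement $X^{2}/Z\to 0$). Consequently ``plugging the bounds back in'' does not yield $1=o(1)$: the surviving terms are exactly $mY^{2}/Z$, $\dot{Y}/Z$ and $(p-m)XY/((\sigma+2)Z)$, and excluding a cancellation of these against the $+1$ contributed by the source is the actual content of the cited appendix. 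Your closing paragraph gestures at some of this but frames it as bookkeeping needed to upgrade pointwise ratios to honest bounds; in fact, without a separate argument controlling $XY/Z$, $Y^{2}/Z$ and $\dot{Y}/Z$ along the orbit (for instance via the second equation of \eqref{PSsyst} combined with sign and monotonicity information), no contradiction is reached at all. This is a genuine gap at the key step.
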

Its proof is identical to the proof of \cite[Lemma 7.2]{IMS23b} and can be found in \cite[Appendix]{IMS23b}, since the fact that $m>m_c$ plays no role in the proof. On the other hand, letting in \eqref{PSinf1} the change of variable $w=xz$, we are left with the system
\begin{equation}\label{PSsyst3.ext}
\left\{\begin{array}{lll}\dot{x}=x[(m-1)y-2x],\\[1mm]
\dot{y}=-y^2-\frac{p-m}{\sigma+2}y-x-Nxy-w,\\[1mm]
\dot{w}=w[(\sigma-2)x+(m+p-2)y],\end{array}\right.
\end{equation}
and, according to the outcome of Lemma \ref{lem.Q4}, all the eventual trajectories connecting to $Q_4$ have $w(\eta_1)\to0$, hence they are found as trajectories connecting to the finite critical points of \eqref{PSsyst3.ext} with $w=0$. There are two such points, $Q_1'=(0,0,0)$ and $Q_5'=(0,-(p-m)/(\sigma+2),0)$. We have the following
\begin{lemma}\label{lem.Q4.bis}
If $m+p\geq2$, all the trajectories of the system \eqref{PSsyst3.ext} connecting to the critical points $Q_1'$ and $Q_5'$ are the same ones inherited from the critical points $Q_1$ and $Q_5$. If $m+p<2$, the point $Q_5'$ is an unstable node, while the point $Q_1'$ presents an unstable sector. In the latter case, the orbits going out of $Q_5'$ contain profiles with a vertical asymptote at some $\xi_0\in(0,\infty)$ and local behavior
\begin{equation*}
f(\xi)\sim\left[\frac{\beta(1-m)}{2m}\xi^2-K\right]^{-1/(1-m)}, \qquad {\rm as} \ \xi\to\xi_0=\sqrt{\frac{2mK}{\beta(1-m)}}, \ \xi>\xi_0,
\end{equation*}
while the orbits going out of $Q_1'$ on the unstable sector present a vertical asymptote at some $\xi_0\in(0,\infty)$ with the local behavior given by one of the following alternatives, all them taken as $\xi\to\xi_0$, $\xi>\xi_0$:
\begin{equation}\label{beh.Q11f}
f(\xi)\sim\left\{\begin{array}{lll}\left[\frac{p-1}{\beta\sigma}\xi^{\sigma}-K\right]^{-1/(p-1)}, & K=\frac{p-1}{\beta\sigma}\xi_0^{\sigma}, & {\rm if} \ \sigma>0,\\ \left[\frac{p-1}{\beta}\ln\,\xi-K\right]^{-1/(p-1)}, & K=\frac{p-1}{\beta}\ln\,\xi_0, & {\rm if} \ \sigma=0, \\
\left[K-\frac{p-1}{|\sigma|\beta}\xi^{\sigma}\right]^{-1/(p-1)}, & K=\frac{p-1}{|\sigma|\beta}\xi_0^{\sigma}, & {\rm if} \ \sigma\in(-2,0).\end{array}\right.
\end{equation}
\end{lemma}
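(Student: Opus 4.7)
The plan is to base the argument on the eigenvalue structure of the linearisations of \eqref{PSsyst3.ext} at $Q_5'$ and $Q_1'$, complemented by a centre manifold reduction at $Q_1'$. At $Q_5'$, a direct computation produces eigenvalues $\lambda_1=(1-m)(p-m)/(\sigma+2)$, $\lambda_2=(p-m)/(\sigma+2)$, and $\lambda_3=(2-m-p)(p-m)/(\sigma+2)$, with the eigenvector for $\lambda_3$ aligned with the $w$-axis. Since $\{w=0\}$ is invariant for \eqref{PSsyst3.ext}, when $m+p\ge 2$ the nonpositive $\lambda_3$ contributes only orbits that stay inside $\{w=0\}$, which are exactly those already identified for $Q_5$ in Lemma \ref{lem.Q1Q5Qg}(2); when $m+p<2$, the sign of $\lambda_3$ flips, all three eigenvalues become positive, and $Q_5'$ turns into a genuine unstable node producing a one-parameter family of new orbits leaving it with $w>0$. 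At $Q_1'$ the spectrum is $\{0,0,-(p-m)/(\sigma+2)\}$, and a centre manifold reduction analogous to Lemma \ref{lem.Q1Q5Qg}(1) places the centre manifold over the $(x,w)$-plane with $y=-(\sigma+2)(x+w)/(p-m)+\mathrm{h.o.t.}$. Substituting back gives a reduced system whose leading quadratic part reads
\begin{equation*}
\dot x = x\bigl[(c-2)x + cw\bigr], \qquad \dot w = w\bigl[(\sigma-2-d)x - dw\bigr],
\end{equation*}
with $c=(1-m)(\sigma+2)/(p-m)$ and $d=(m+p-2)(\sigma+2)/(p-m)$. Since $p>p_L(\sigma)$ is equivalent to $c<2$, the $x$-axis is always stable from $Q_1'$, while the $w$-axis is stable when $d\ge 0$ (i.e.\ $m+p\ge 2$) and unstable when $d<0$ (i.e.\ $m+p<2$); this yields the claimed unstable sector in the second case.

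To extract the profile behaviours when $m+p<2$, I would translate the corresponding orbits back through $w=xz$ and \eqref{change2}. An orbit leaving $Q_5'$ in the new unstable direction has $x\to 0^+$, $y\to-(p-m)/(\sigma+2)$, $w\to 0^+$, which forces $\xi\to\xi_0\in(0,\infty)$ and $f(\xi)\to\infty$. Testing the ansatz $f\sim[A(\xi^2-\xi_0^2)]^{-1/(1-m)}$ in \eqref{ODE.forward}, the terms $(f^m)''$ and $\beta\xi f'$ balance at order $(\xi-\xi_0)^{-(2-m)/(1-m)}$, whereas $\alpha f$, $(N-1)(f^m)'/\xi$ and, decisively, $\xi^\sigma f^p\sim(\xi-\xi_0)^{-p/(1-m)}$ are strictly of lower order precisely because $p<2-m$; integrating the reduced equation $(f^m)'=-\beta\xi f$ once then pins down $A=\beta(1-m)/(2m)$, matching the stated expression. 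For orbits leaving $Q_1'$ through the unstable $w$-sector the appropriate ansatz is instead $f\sim(\xi-\xi_0)^{-1/(p-1)}$, for which diffusion $(f^m)''$ is subleading and the dominant balance collapses to $\beta\xi f'+\xi^\sigma f^p=0$; an elementary separation of variables, with the antiderivative of $\xi^{\sigma-1}$ being a power when $\sigma\neq 0$ and a logarithm when $\sigma=0$, produces the three cases in \eqref{beh.Q11f}, the constant $K$ being fixed by the requirement that the bracket vanishes exactly at $\xi_0$.

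The main obstacle is verifying rigorously that the sector picture obtained at the quadratic level on the centre manifold of $Q_1'$ persists for the full nonlinear flow: because two eigenvalues vanish, the usual stable/unstable manifold theorem does not apply, and the reduced system must be supplemented with a polar desingularisation in the $(x,w)$-plane in order to produce genuine orbits escaping $Q_1'$ with $w>0$ (and to exclude spurious ones when $m+p\ge 2$). Once this is in place, matching these orbits with the ODE profiles coming from the dominant balance is routine and follows the same template as the analogous critical point treated in \cite[Section 4 and Appendix]{IMS23b}.
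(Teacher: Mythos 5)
Your proposal follows essentially the same route as the paper: a direct linearization at $Q_5'$ (with the same eigenvalues, the sign of $(2-m-p)(p-m)/(\sigma+2)$ deciding between the two regimes), the same center-manifold reduction at $Q_1'$ via $y=-\tfrac{\sigma+2}{p-m}(x+w)+\mathrm{h.o.t.}$ (the paper's change of variable $t=\tfrac{p-m}{\sigma+2}y+x+w$), and the same dominant balances $(f^m)''+\beta\xi f'\sim0$ at $Q_5'$ and $\beta\xi f'+\xi^{\sigma}f^p\sim0$ on the unstable sector of $Q_1'$, which reproduce the stated local behaviors. The only quibble is that the $\lambda_3$-eigenvector at $Q_5'$ is $(0,1,\lambda_2-\lambda_3)$ rather than the $w$-axis (its stable manifold for $m+p>2$ lies in the invariant plane $\{x=0\}$, which is why it carries no profiles), and the desingularization issue you flag at $Q_1'$ is exactly what the paper outsources to the detailed proof in \cite[Lemma 4.3]{IMS23b}.
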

\begin{proof}[Sketch of the proof]
For the point $Q_5'$, the only difference with respect to \cite[Lemma 4.2]{IMS23b} is the direction of the flow, which is readily obtained by computing the matrix of the linearization in the system \eqref{PSsyst3.ext}. With respect to the point $Q_1'$, the proof is much more involved, as it is a non-hyperbolic critical point with different sectors, but the details can be found in the proof of \cite[Lemma 4.3]{IMS23b}. As a noticeable difference with respect to the previously quoted proof, observe that
$$
t=\frac{p-m}{\sigma+2}y+x+w
$$
is the change of variable in \eqref{PSsyst3.ext} in order to apply the center manifold theorem, which in the end leads to
\begin{equation}\label{interm13}
\frac{p-m}{\sigma+2}y=-w+o(w)
\end{equation}
on the unstable sector, and passing to profiles we obtain
$$
(f^{1-p})'(\xi)\sim\frac{p-1}{\beta}\xi^{\sigma-1}, \qquad {\rm as} \ \xi\to\xi_0\in(0,\infty), \ \xi>\xi_0,
$$
which leads the alternatives in \eqref{beh.Q11f} by integration on $(\xi_0,\xi)$. We omit the rest of the details, as they are given in the proof of \cite[Lemma 4.3]{IMS23b}.
\end{proof}
These two critical points will not play any specific role in the forthcoming analysis, but we have just classified all the possible local behaviors of global (in time) self-similar solutions.

\medskip

\noindent \textbf{The critical case $m=m_c$.} Notice that, if $m=m_c$, we have
\begin{equation}\label{interm28}
p_L(\sigma)=p_F(\sigma)=p_c(\sigma)=\frac{N+\sigma}{N},
\end{equation}
and we are thus by default in the range $p>p_c(\sigma)$. By inspecting the previous analysis, we remark that the only difference that appears for $m=m_c$ with respect to the critical points of the system \eqref{PSsyst} is the fact that $P_1$ and $P_3$ coincide.
\begin{lemma}\label{lem.P1P3crit}
Let $m=m_c$ and $p>p_L(\sigma)=p_c(\sigma)$. Then the critical point $P_1=P_3$ in the system \eqref{PSsyst} is a non-hyperbolic critical point which behaves as a saddle, with a two-dimensional center-stable manifold and a one-dimensional unstable manifold. The unstable manifold is contained in the invariant $Y$-axis, while the trajectories entering $P_1=P_3$ on the center-stable manifold contain profiles with the decay as $\xi\to\infty$ given by \eqref{beh.P3crit}.
\end{lemma}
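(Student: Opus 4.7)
The plan is to exploit the coincidence $P_1=P_3$ when $m=m_c$, extract the linear part together with the (trivial) location of the unstable manifold, and then run a scalar center manifold reduction to read off the logarithmic tail \eqref{beh.P3crit}.

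First I would verify that for $m=m_c=(N-2)/N$ one has $mN-N+2=0$ and $-(N-2)/m=-N=-2/(1-m)$, so that $P_1=P_3=(0,-N,0)$. Shifting via $v=Y+N$, a direct calculation gives the Jacobian
\begin{equation*}
M(P_1)=\begin{pmatrix} 0 & 0 & 0 \\ \frac{N(p-p_c(\sigma))}{\sigma+2} & N-2 & -1 \\ 0 & 0 & -N(p-p_c(\sigma)) \end{pmatrix},
\end{equation*}
with eigenvalues $0$, $N-2>0$ and $-N(p-p_c(\sigma))<0$ (using $p>p_L(\sigma)=p_c(\sigma)$ from \eqref{interm28}). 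The eigenvector for the positive eigenvalue $N-2$ is $(0,1,0)$, and since the $Y$-axis is invariant for \eqref{PSsyst} (because $X=Z=0$ forces $\dot X=\dot Z=0$), the unique one-dimensional unstable manifold is precisely the portion of the $Y$-axis through $P_1=P_3$.

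The next step is to apply the Center Manifold Theorem in the version of \cite{Carr} exactly as in \cite[Lemma 4.3]{IMS23b}, producing a two-dimensional invariant center-stable manifold tangent at $P_1=P_3$ to the span of the center and stable eigenvectors. I would then express $v$ and $Z$ as graphs over a center coordinate $u$ (which to leading order may be identified with $X$), with $v=c_1 u+O(u^2)$ (tangent to the zero-eigenvector, so $c_1=-N(p-p_c(\sigma))/((\sigma+2)(N-2))$) and $Z=O(u^2)$ since the $Z$-direction is transverse to the center. Substituting into $\dot X=(1-m)Xv$ yields a reduced scalar flow $\dot u=-\kappa u^2+O(u^3)$ with $\kappa>0$, so that the center direction behaves as a stable one and the entire center-stable manifold is attracting, giving $u(\eta)\sim 1/(\kappa\eta)$ as $\eta\to\infty$.

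Finally I would translate back to profiles by undoing \eqref{PSchange}: using $1-m=2/N$ together with the explicit value $\alpha=(\sigma+2)/(2(p-p_c(\sigma)))$ valid at $m=m_c$, the estimate $X(\eta)\sim 1/(\kappa\ln\xi)$ becomes $f(\xi)^{1-m}\sim m(N-2)/(\xi^2\ln\xi)$, yielding \eqref{beh.P3crit} with constant $C_0=[m_c(N-2)]^{N/2}=[N/(N-2)^2]^{-N/2}$. The main technical obstacle is the bookkeeping needed to pin down the precise value of $\kappa$, and hence the exact constant $C_0$, through the center manifold reduction; the structure of the calculation is standard, but it depends on tracking cancellations between quadratic terms along the graph of the center manifold, so the computation must be carried out with care.
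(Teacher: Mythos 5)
Your proposal is correct and follows essentially the same route as the paper: the same linearization with eigenvalues $0$, $N-2$ and $N(p_c(\sigma)-p)$, the unstable manifold located in the invariant $Y$-axis via the eigenvector $(0,1,0)$, the center manifold tangent to $(1,-N(p-p_c(\sigma))/((N-2)(\sigma+2)),0)$ with reduced flow $\dot{X}=-\kappa X^{2}$, $\kappa=2(p-p_c(\sigma))/((N-2)(\sigma+2))$, and the resulting constant $C_0$ agrees. The only cosmetic difference is that you integrate the reduced equation directly in $\eta$ and then undo \eqref{PSchange}, whereas the paper first converts the center-manifold relation into an ODE for the profile and integrates in $\xi$; both yield \eqref{beh.P3crit}.
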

Notice that $-N=-2/(1-m)$ if $m=m_c$, hence \eqref{beh.P3crit} is a similar decay to \eqref{beh.P3}, but adding up a logarithmic correction, sometimes specific to critical cases.
\begin{proof}
The linearization of the system \eqref{PSsyst} in a neighborhood of the critical point has the matrix
$$
M(P_1)=\left(
         \begin{array}{ccc}
           0 & 0 & 0 \\[1mm]
           \frac{N(p-p_c(\sigma))}{\sigma+2} & N-2 & -1 \\[1mm]
           0 & 0 & N(p_c(\sigma)-p) \\
         \end{array}
       \right),
$$
with eigenvalues
$$
\lambda_1=0, \qquad \lambda_2=N-2>0, \qquad \lambda_3=N(p_c(\sigma)-p)<0.
$$
In order to analyze the flow on the one-dimensional center manifold, we perform the translation of $P_1$ to the origin by setting $W=Y+(N-2)/m$. Taking into account that $m=m_c$, the system \eqref{PSsyst} writes
\begin{equation}\label{syst.P1crit}
\left\{\begin{array}{ll}\dot{X}=\frac{2}{N}XW,\\[1mm]
\dot{W}=\frac{N(p-p_c(\sigma))}{\sigma+2}X+(N-2)W-Z-\frac{N-2}{N}W^2-\frac{Np-N+2}{N(\sigma+2)}XW,\\[1mm]
\dot{Z}=-N(p-p_c(\sigma))Z+\frac{Np-N+2}{N}ZW.\end{array}\right.
\end{equation}
According to \cite[Theorem 3.2.1]{GH}, the center manifold is tangent to the eigenvector corresponding to the zero eigenvalue, thus we can conclude that a first order approximation of it in terms of $W$ and $X$ writes in a neighborhood of $P_1$ as
\begin{equation}\label{interm29}
Y+\frac{N-2}{m}=W=-\frac{N(p-p_c(\sigma))}{(N-2)(\sigma+2)}X+o(X).
\end{equation}
More tedious calculation can give a more precise approximation of the center manifold, but the previous approximation is enough to conclude, by using the reduction principle \cite[Theorem 2, Section 2.4]{Carr} and the first equation in \eqref{syst.P1crit}, that the direction of the flow on the center manifolds is given by the equation
$$
\dot{X}=-\frac{2(p-p_c(\sigma))}{(N-2)(\sigma+2)}X^2<0,
$$
hence any center manifold has a stable flow. Together with the orbit on the stable manifold corresponding to the negative eigenvalue $\lambda_3$, we thus find a two-dimensional center-stable manifold. The uniqueness of the unstable manifold \cite[Theorem 3.2.1]{GH} together with the direction of the eigenvector $e_2=(0,1,0)$ associated to the eigenvalue $\lambda_2=N-2$ and the invariance of the $Y$-axis ensure that the unstable manifold of $P_1$ is fully contained in the $Y$-axis, as claimed. We are left with the local behavior of the profiles on the center-stable manifold. To this end, we begin with the expression of the center manifold and give below a formal deduction of the decay \eqref{beh.P3crit}. By replacing \eqref{interm29} in terms of profiles, we get
\begin{equation}\label{interm30}
\frac{\xi f'(\xi)}{f(\xi)}\sim-\frac{2}{1-m}-\frac{(Np-N-\sigma)\alpha}{m(N-2)(\sigma+2)}\xi^2f(\xi)^{1-m}, \qquad {\rm as} \ \xi\to\infty.
\end{equation}
We plug in \eqref{interm30} the ansatz
\begin{equation}\label{ansatz}
f(\xi)=\xi^{-2/(1-m)}g(\xi)=\xi^{-N}g(\xi)
\end{equation}
and deduce after straightforward calculations the differential equation satisfied in a first approximation by $g(\xi)$ as
\begin{equation}\label{interm31}
\frac{\xi g'(\xi)}{g(\xi)}=-\frac{N^2}{2(N-2)^2}g(\xi)^{1-m},
\end{equation}
which by integration leads to
$$
g(\xi)=\left[\frac{N}{(N-2)^2}\ln\,\xi\right]^{-1/(1-m)}=\left[\frac{N}{(N-2)^2}\ln\,\xi\right]^{-N/2},
$$
and the decay \eqref{beh.P3crit} follows from the ansatz \eqref{ansatz}. In order to make the above rigorous, we actually need the next order of approximation of the center manifold \eqref{interm29} in terms of $X^2$, that can be obtained following \cite[Theorem 3, Section 2.5]{Carr}, in order to show that, when undoing the change of variable and obtain \eqref{interm31}, we can safely integrate the equation as the next term of approximation (the one coming from the term in $X^2$) is indeed of lower order even when dividing by $\xi$ (recall that $\xi\to\infty$). We omit here these technical details which require a few more calculations.
\end{proof}

\section{Proof of Theorem \ref{th.global.super}}\label{sec.global.super}

We complete in this section the proof of Theorem \ref{th.global.super}. To this end, let us recall as an outcome of \eqref{interm1} and \eqref{interm2} that the two-dimensional stable manifold of $P_0$ can be described as the one-parameter family of trajectories
\begin{equation}\label{orbP0.global}
(l_C): \qquad Y(\eta)\sim-\frac{X(\eta)}{N}-\frac{CX(\eta)^{(\sigma+2)/2}}{N+\sigma}, \qquad C\in[0,\infty), \qquad {\rm as} \ \eta\to-\infty,
\end{equation}
together with the orbit $l_{\infty}$ included in the invariant plane $\{X=0\}$. With respect to the latter, we notice that the invariant plane $\{X=0\}$ of the system \eqref{PSsyst} is exactly the same one as in \cite[Section 5]{IMS23b} and we conclude from the analysis therein that the orbit $l_{\infty}$

$\bullet$ connects to the stable node $Q_3$ if $p\in(p_L(\sigma),p_s(\sigma))$.

$\bullet$ connects to the point $P_1$ if $p=p_s(\sigma)$, with the explicit trajectory
\begin{equation}\label{cylinder}
Z=-\frac{N+\sigma}{N-2}(mY+N-2)Y.
\end{equation}

$\bullet$ connects to the point $P_2$ if $p>p_s(\sigma)$.

We analyze now the other limit trajectory, namely $l_0$, corresponding to $C=0$ in \eqref{orbP0.global} and thus fully contained in the invariant plane $\{Z=0\}$. It is at this point where the Fujita exponent comes into play with decisive effect in the analysis.
\begin{lemma}\label{lem.Z0}
Let $N\geq1$, $\sigma>\max\{-2,-N\}$, $m\geq m_c$ and $p>p_L(\sigma)$. Then the orbit $l_0$ contained in the invariant plane $\{Z=0\}$ connects to the stable node $Q_3$ if $p<p_F(\sigma)$, to the saddle point $P_3$ if $p=p_F(\sigma)$ and to the non-hyperbolic point $Q_1$ which behaves as a stable node if $p>p_F(\sigma)$.
\end{lemma}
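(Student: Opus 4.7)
The plan is to restrict \eqref{PSsyst} to the invariant plane $\{Z=0\}$, where it reduces to the planar system
\begin{equation*}
\dot X = X\bigl(2+(1-m)Y\bigr), \qquad \dot Y = -X - (N-2)Y - mY^2 - \tfrac{p-m}{\sigma+2}XY,
\end{equation*}
and to identify an explicit invariant line at $p = p_F(\sigma)$ acting as a separatrix. Specializing the analyses of Section \ref{sec.local} to this plane, $P_0$ and $P_3$ are hyperbolic saddles (with unstable eigendirection $(N,-1)$ at $P_0$ carrying $l_0$), $P_1$ and $Q_5$ are sources, and $Q_1$, $Q_3$ are the only sinks accessible from the interior half-plane $\{X>0\}$.

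The crucial algebraic observation is that the line $\{Y=-X/N\}$ is invariant for the reduced system \emph{if and only if} $p = p_F(\sigma)$. A direct substitution yields
\begin{equation*}
\Bigl(\dot Y + \tfrac{\dot X}{N}\Bigr)\bigg|_{Y=-X/N} = \frac{X^2}{N(\sigma+2)}\bigl(p - p_F(\sigma)\bigr).
\end{equation*}
At $p = p_F(\sigma)$ the line is therefore invariant; it passes through $P_0$ and through $P_3 = (2N/(1-m), -2/(1-m))$, and the one-dimensional restricted flow $\dot X = 2X - (1-m)X^2/N$ produces the unique interior heteroclinic orbit $P_0 \to P_3$. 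Since $l_0$ leaves $P_0$ tangent to $(N,-1)$, it coincides with this connection, settling the critical case.

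For $p \neq p_F(\sigma)$ I would expand $l_0$ near $P_0$ in the form $Y(X) = -X/N + a X^2 + O(X^3)$ and match with the ODE to obtain
\begin{equation*}
a = \frac{p - p_F(\sigma)}{N(\sigma+2)(N+2)},
\end{equation*}
so $l_0$ emerges strictly above $\{Y=-X/N\}$ when $p > p_F(\sigma)$ and strictly below it when $p < p_F(\sigma)$. Combined with the sign of the transversality identity on the line, the corresponding half-plane is positively invariant and $l_0$ is trapped on its initial side for all forward time. Moreover, a short manipulation of $L = \sigma(m-1)+2(p-1)$ gives $X_3(p) \lessgtr 2N/(1-m)$ iff $p \gtrless p_F(\sigma)$, so $P_3$ always lies on the \emph{opposite} side of the separating line from $l_0$ and is therefore never reached by $l_0$.

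It remains to identify the $\omega$-limit of $l_0$ inside each invariant half-plane. Neither open half-plane contains any finite critical point in its interior, so by index theory no periodic orbit is possible; the absence of interior sources further precludes heteroclinic polycycles, and the stable manifold $\{X=0\}$ of $P_0$ is disjoint from $l_0$ (by uniqueness). Hence $l_0$ must escape to infinity. For $p > p_F(\sigma)$, both $Q_3$ (where $|Y|/X \to \infty$) and $Q_5$ (direction $Y/X = -(p-m)/(\sigma+2) < -1/N$) sit below the line, leaving $Q_1$ as the only accessible sink and forcing $l_0 \to Q_1$. For $p < p_F(\sigma)$, $Q_1$ (direction $Y/X \to 0$) lies above the line and $Q_5$ is a source, so $Q_3$ is the only accessible sink and $l_0 \to Q_3$. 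The main technical obstacle I anticipate is the rigorous exclusion of recurrent behaviour in each half-plane via the Poincaré compactification (or alternatively a Dulac-type multiplier such as $B(X,Y)=1/X$), though the absence of finite critical points in the interior should make this essentially automatic.
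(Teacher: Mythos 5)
Your proposal is correct and follows essentially the same route as the paper: restriction to the invariant plane $\{Z=0\}$, the transversality identity $\bigl(\dot Y+\dot X/N\bigr)\big|_{Y=-X/N}=\frac{X^2}{N(\sigma+2)}(p-p_F(\sigma))$ making $\{Y=-X/N\}$ the separatrix (and the $P_0\to P_3$ heteroclinic at $p=p_F(\sigma)$), the second-order expansion of $l_0$ with the same coefficient $\frac{p-p_F(\sigma)}{N(N+2)(\sigma+2)}$, the observation that $P_3$ lies on the opposite side of the line, and Poincar\'e--Bendixson to force convergence to $Q_3$ or $Q_1$ accordingly. The only differences are cosmetic (you invoke index theory where the paper cites Poincar\'e--Bendixson directly, and the paper additionally traps the orbit in the strip $-2/(1-m)<Y<0$ for $p>p_F(\sigma)$ to pin down $Q_1$, a detail worth keeping).
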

\begin{proof}
Assume first that $m>m_c$. The system \eqref{PSsyst} reduces in the plane $\{Z=0\}$ to
\begin{equation}\label{systZ0}
\left\{\begin{array}{ll}\dot{X}=X(2+(1-m)Y),\\\dot{Y}=-X-(N-2)Y-mY^2-\frac{p-m}{\sigma+2}XY.\end{array}\right.
\end{equation}
Consider now the line $Y=-X/N$. An easy calculation gives that the crossing point between this line and the isocline $Y=-2/(1-m)$ is attained at $X_0=2N/(1-m)$ and with respect to the critical point $P_3$, this intersection point satisfies
\begin{equation}\label{interm6}
X(P_3)-X_0=\frac{2(\sigma+2)(mN-N+2)}{L(1-m)}-\frac{2N}{1-m}=\frac{4N(p_F(\sigma)-p)}{L(1-m)}.
\end{equation}
We thus deduce from \eqref{interm6} that $Y=-X/N$ is the straight line connecting $P_0$ and $P_3$ if $p=p_F(\sigma)$, while the crossing point lies below $P_3$ (that is, smaller coordinate $X_0$) if $p<p_F(\sigma)$ and above $P_3$ if $p>p_F(\sigma)$. Moreover, one can readily check that if $p=p_F(\sigma)$, $Y=-X/N$ is also a trajectory of the system \eqref{systZ0}. Taking the normal vector $\overline{n}=(1/N,1)$, the flow of the system \eqref{systZ0} on the line $Y=-X/N$ is given by the sign of the scalar product between the vector field of the system and this normal vector, which gives the expression
\begin{equation}\label{interm7}
F(X)=-\frac{(mN-Np+\sigma+2)X^2}{N^2(\sigma+2)}=\frac{X^2}{N(\sigma+2)}(p-p_F(\sigma)).
\end{equation}
We are only left with finding the region in which the orbit $l_0$ goes out of $P_0$, with respect to the line $Y=-X/N$, knowing already that they coincide if $p=p_F(\sigma)$. To this end, we need to compute the second order of the local expansion of the orbit $l_0$ in a neighborhood of $(X,Y)=(0,0)$. One can proceed as in \cite[Section 2.7]{Shilnikov} to identify the Taylor expansion of the unstable manifold in a neighborhood of a saddle point. In our case, we set
\begin{equation}\label{interm8}
Y=\psi(X):=-\frac{1}{N}X+KX^2+o(X^2), \qquad K\in\real,
\end{equation}
and plug the ansatz \eqref{interm8} into the system \eqref{systZ0}. We thus have
$$
\dot{Y}=\psi'(X)\dot{X}=\left(-\frac{1}{N}+2KX+o(X)\right)\dot{X},
$$
which after equating the coefficients of $X$ and some straightforward calculations leads to the expansion
\begin{equation}\label{interm9}
Y(\eta)\sim-\frac{1}{N}X(\eta)+\frac{p-p_F(\sigma)}{N(N+2)(\sigma+2)}X^2(\eta)+o(X^2(\eta)), \qquad {\rm as} \ \eta\to-\infty.
\end{equation}
From the expansion \eqref{interm9}, the direction of the flow on the line $Y=-X/N$ given by the sign of $F(X)$ in \eqref{interm7}, and the position of the crossing between the line $Y=-X/N$ with the vertical $Y=-2/(1-m)$ of the critical point $P_3$, we deduce that, if $p<p_F(\sigma)$, the orbit $l_0$ goes out and then stays forever in the region $\{X>0, Y<-X/N\}$, as it cannot cross the line $Y=-X/N$, while the point $P_3$ belong to the opposite half-plane, while if $p>p_F(\sigma)$, the orbit $l_0$ goes out and then stays forever in the region $\{X>0, Y>-X/N\}$ as it cannot cross the line $Y=-X/N$, while the point $P_3$ belongs to the opposite region. Poincar\'e-Bendixon's theory \cite[Theorem 5, Section 3.7]{Pe} and the non-existence of critical points in the two half-planes in any of the two cases (as $P_3$ lies in the opposite one) imply that there are no periodic orbits or other type of $\omega$-limits, thus $l_0$ has to connect to a critical point. If $p<p_F(\sigma)$, this is obviously $Q_3$ since $X(\eta)$ is decreasing after crossing the line $Y=-2/(1-m)$. If $p>p_F(\sigma)$, the orbit remains forever in the region
$$
\mathcal{S}=\left\{X>0, Y>-\frac{1}{N}X, -\frac{2}{1-m}<Y<0\right\},
$$
since the flow on the line $Y=-2/(1-m)$ has positive sign if $X>X(P_3)$ and the flow on the line $\{Y=0\}$ points obviously into the negative half-plane. It follows that $\dot{X}(\eta)>0$ for any $\eta\in\real$, hence $X(\eta)$ increases with $\eta$. This monotonicity, together with the non-existence of any finite critical point in the region $\mathcal{S}$ if $p>p_F(\sigma)$ imply that $X(\eta)\to\infty$ as $\eta\to\infty$, hence also $Y(\eta)/X(\eta)\to0$ as $\eta\to\infty$. We then readily infer that the orbit $l_0$ enters the critical point $Q_1$. In the critical case $m=m_c$, we are by default in the case $p>p_F(\sigma)=p_L(\sigma)$ according to \eqref{interm28} and the above considerations corresponding to the range $p>p_F(\sigma)$ still hold true identically.
\end{proof}
We are now ready to complete the proof of Theorem \ref{th.global.super}.
\begin{proof}[Proof of Theorem \ref{th.global.super}]
\textbf{(a) Range $p_L(\sigma)<p\leq p_F(\sigma)$.} Let us consider the plane $\{Y=-X/N\}$, and observe that the direction of the flow of the system \eqref{PSsyst} on this plane is given by the sign of the expression
$$
G(X,Z)=\frac{X^2}{N(\sigma+2)}(p-p_F(\sigma))-Z<0,
$$
hence, the region $\mathcal{V}:=\{X>0, Y\leq-X/N\}$ is positively invariant for the flow. Moreover, \eqref{orbP0.global} and Lemma \ref{lem.Z0} imply that all the orbits $l_C$ of the stable manifold of $P_0$ go out into the region $\mathcal{V}$, with the unique exception of the orbit $l_0$ if $p=p_F(\sigma)$. This implies that no other orbit $l_C$ enters either $P_3$ or $Q_1$, the critical points encoding a tail behavior as $\xi\to\infty$ (and in reality, one can rather easily show that all these orbits connect to the stable node $Q_3$, although we do not need this information for the proof). Thus, there are no profiles with any tail as $\xi\to\infty$, proving Part 3 in Theorem \ref{th.global.super}.

\medskip

\textbf{(b) Range $p_F(\sigma)<p<p_s(\sigma)$.} This is the most interesting range in the theorem, since it gives rise to a specific behavior to the supercritical fast diffusion. To prove it, we introduce the following three sets:
\begin{equation*}
\begin{split}
&\mathcal{A}=\{C\in(0,\infty): {\rm the \ orbit} \ l_C \ {\rm enters \ the \ point} \ Q_3\},\\
&\mathcal{B}=\{C\in(0,\infty): {\rm the \ orbit} \ l_C \ {\rm does \ not \ enter} \ Q_3 \ {\rm nor} \ Q_1\},\\
&\mathcal{C}=\{C\in(0,\infty): {\rm the \ orbit} \ l_C \ {\rm enters \ the \ point} \ Q_1\}.
\end{split}
\end{equation*}
Since, by Lemma \ref{lem.Q23}, respectively Lemma \ref{lem.Q1Q5Qg}, both points $Q_3$ and $Q_1$ behave like attractors for the system \eqref{PSsyst}, we infer that the sets $\mathcal{A}$ and $\mathcal{C}$ are open. Moreover, the analysis in \cite[Section 5]{IMS23b} proves that the orbit $l_{\infty}$ connects to $Q_3$ and by continuity and the stability of $Q_3$, the set $\mathcal{A}$ is nonempty and in fact contains an open interval $(C^*,\infty)$. Lemma \ref{lem.Z0} shows that in this range the orbit $l_0$ enters $Q_1$ and the stability of $Q_1$ and continuity arguments give that the set $\mathcal{C}$ is also nonempty and in fact contains an interval of the form $(0,C_*)$. We obtain by standard topology that the set $\mathcal{B}$ is nonempty. Let us fix now $C_0\in\mathcal{B}$. We want to show that the orbit $l_{C_0}$ enters the saddle point $P_3$ if $m>m_c$ or the non-hyperbolic point $P_1=P_3$ if $m=m_c$.

Assume that $l_{C_0}$ reaches an $\omega$-limit set $\Gamma\subset\real^3$ contained in the octant $\{X\geq0, Y<0, Z\geq0\}$. Introduce the function
$$
g(\xi):=\xi^{(\sigma+2)/(p-m)}f(\xi).
$$
One can then straightforwardly compute, starting from \eqref{ODE.forward}, the differential equation satisfied by $g$, which is
\begin{equation}\label{interm11}
\begin{split}
\xi^2(g^m)''(\xi)&+\left(N-1-\frac{2m(\sigma+2)}{p-m}\right)\xi(g^m)'(\xi)+\frac{m(\sigma+2)(m\sigma+m+p)}{(p-m)^2}g^m(\xi)\\
&+\frac{p-m}{L}\xi^{(m-1)(\sigma+2)/(p-m)+3}g'(\xi)+g^p(\xi)=0.
\end{split}
\end{equation}
We readily notice that $g(\xi)$ solution to \eqref{interm11} cannot admit minimum points, thus either $g$ is monotone or it has a single maximum points and becomes then decreasing, and in both cases there exists
$$
l:=\lim\limits_{\xi\to\infty}g(\xi)=\lim\limits_{\xi\to\infty}(mZ(\xi))^{1/(p-m)},
$$
whence $Z(\xi)$ also has a limit as $\xi\to\infty$. If $Z(\xi)\to0$ as $\xi\to\infty$ along the orbit $l_{C_0}$, we infer that $\Gamma$ is contained in the invariant plane $\{Z=0\}$. Poincar\'e-Bendixon's theory \cite[Section 3.7]{Pe} applies then to the trajectory $\Gamma$, which is also invariant for \eqref{PSsyst} according to \cite[Theorem 2, Section 3.2]{Pe}, and readily gives that $\Gamma$ is a periodic orbit or a critical point. We then infer from \cite[Theorem 5, Section 3.7]{Pe} that $P_3$ lies inside the region limited by $\Gamma$, and this implies that $\Gamma=\{P_3\}$, since $P_3$ is a hyperbolic critical point in the plane $\{Z=0\}$. Assume next that $Z(\xi)\to L_0\in(0,\infty)$ as $\xi\to\infty$ along the trajectory $l_{C_0}$. Since $\Gamma$ itself is an invariant trajectory of the system, it follows from the third equation of the system \eqref{PSsyst} that on $\Gamma$ we must have $Y(\eta)\to-(\sigma+2)/(p-m)$ as $\eta\to\infty$. The second equation of the system \eqref{PSsyst} then gives that
$$
-(N-2)Y(\eta)-mY(\eta)^2-Z(\eta)=0, \qquad {\rm that \ is} \ L_0=\frac{(N-2)(\sigma+2)(p-p_c(\sigma))}{(p-m)^2},
$$
while the first equation implies that either $X(\eta)=0$ for any $\eta\in\real$ (in which case $\Gamma=\{P_2\}$) or $\dot{X}(\eta)>0$ for any $\eta\in\real$. The latter gives $X(\eta)\to\infty$ as $\eta\to\infty$ and thus $\Gamma=\{Q_1\}$ in variables $(x,y,z)$ defined in \eqref{change2}. In both cases, we reach a contradiction: $\Gamma=\{P_2\}$ implies $C_0=\infty$ and $\Gamma=\{Q_1\}$ implies $C_0\in\mathcal{C}$, thus not in $\mathcal{B}$. Finally, if $Z(\xi)\to\infty$ as $\xi\to\infty$ (which is the same for $Z(\eta)$ as $\eta\to\infty$) along the orbit $l_{C_0}$, we conclude that there exists $\eta_0\in\real$ such that $\dot{Z}(\eta)>0$ for $\eta>\eta_0$. The third equation in \eqref{PSsyst} then gives
\begin{equation}\label{interm12}
-\frac{\sigma+2}{p-m}\leq Y(\eta)<0, \qquad \eta>\eta_0,
\end{equation}
which in particular also gives $\dot{X}(\eta)>0$ for $\eta>\eta_0$. This monotonicity of both $X(\eta)$ and $Z(\eta)$ along the trajectory $l_{C_0}$, together with the boundedness of $Y(\eta)$ following from \eqref{interm12}, shows that $\Gamma$ is a set at infinity composed by points with coordinates $x=y=0$ in the variables defined by \eqref{change2}, which means that $\Gamma$ is at least a part of the critical line composed by the points $Q_{\gamma}$ together with $Q_1$ and $Q_4$. But this is impossible, since Lemmas \ref{lem.Q4}, \ref{lem.Q4.bis} and \ref{lem.Q1Q5Qg} imply that there cannot be any trajectory entering or approaching any of $Q_{\gamma}$ with $\gamma>0$ and $Q_4$ from the finite part of the phase space, while $Q_1$ behaves like an attractor and in this case, we would have $C_0\in\mathcal{C}$. We have thus proved that the only possibility is $\Gamma=\{P_3\}$. Thus, for any $C\in\mathcal{B}$, the trajectories $l_C$ connect $P_0$ to $P_3$. Lemmas \ref{lem.P0} and \ref{lem.P3}, together with the non-emptiness of the set $\mathcal{C}$, complete the proof of Part 1 in Theorem \ref{th.global.super}.

\medskip

\textbf{(c) Range $p_s(\sigma)\leq p<\infty$.} We consider in this range the cylinder with basis on the explicit curve \eqref{cylinder}. The flow of the system \eqref{PSsyst} over this cylinder is given by the sign of the expression
\begin{equation}\label{flow.cyl.ext}
\begin{split}
E(X,Y;p)=\frac{N+\sigma}{N-2}&\left[(p_s(\sigma)-p)Y^2(mY+N-2)\right.\\&\left.-X\left(1+\frac{p-m}{\sigma+2}Y\right)(2mY+N-2)\right],
\end{split}
\end{equation}
which is obviously negative if $p=p_s(\sigma)$, since
$$
E(X,Y;p_s(\sigma))=-(N+\sigma)X\left(1+\frac{2m}{N-2}Y\right)^2<0,
$$
while if $p>p_s(\sigma)$ it might only be positive on a subset of the interval
\begin{equation}\label{interm10}
-\frac{N-2}{m}<Y<-\frac{\sigma+2}{p-m}.
\end{equation}
But on an orbit lying in the interior of the cylinder, we notice that $\dot{Z}(\eta)<0$ if $Y(\eta)$ satisfies \eqref{interm10}, while $Z$ still increases on the boundary of the cylinder if $Y$ satisfies \eqref{interm10}. It follows that no trajectory may leave the interior of the cylinder, once there. It is then easy to see that, if $p\geq p_s(\sigma)$, all the orbits $l_C$ as in \eqref{orbP0.global} go out in the interior of the cylinder (with the exception of $l_{\infty}$ if $p=p_s(\sigma)$, in such case $l_{\infty}$ being exactly the basis of the cylinder in the plane $\{X=0\}$). Hence, all the orbits $(l_C)$, with $C\in(0,\infty)$, remain forever in the interior of the cylinder. Since for $m>m_c$ we have
$$
\frac{2}{1-m}-\frac{N-2}{m}=\frac{mN-N+2}{m(1-m)}>0,
$$
we infer that $P_3$ lies in the exterior of the cylinder \eqref{cylinder} and thus there are no trajectories connecting $P_0$ and $P_3$. If $m=m_c$, the point $P_3=P_1$ belongs to the surface of the cylinder, but, as we have seen in Lemma \ref{lem.P1P3crit} together with \cite[Lemma 1, Section 2.4]{Carr}, the orbits on the center-stable manifold enter the point tangent to the center manifold \eqref{interm29} and thus come from the exterior of the cylinder \eqref{cylinder}, and we conclude that there are no trajectories connecting $P_0$ to $P_3=P_1$ also in this case. Lemma \ref{lem.Z0}, together with the fact that $Q_1$ behaves like an attractor for the orbits of \eqref{PSsyst}, imply that there are infinitely many trajectories between $P_0$ and $Q_1$, proving thus Part 2 in Theorem \ref{th.global.super}.
\end{proof}
We end this section by plotting in Figure \ref{fig2} the outcome of a numerical experiment to visualize how the orbits stem from $P_0$ and connect to either $P_3$ or critical points at infinity, agreeing with the above proof in the range $p_F(\sigma)=5/3<p<p_s(\sigma)=14/3$ when the specific orbits connecting $P_0$ to $P_3$ are obtained.
\begin{figure}[ht!]
  \begin{center}
  \includegraphics[width=11cm,height=7.5cm]{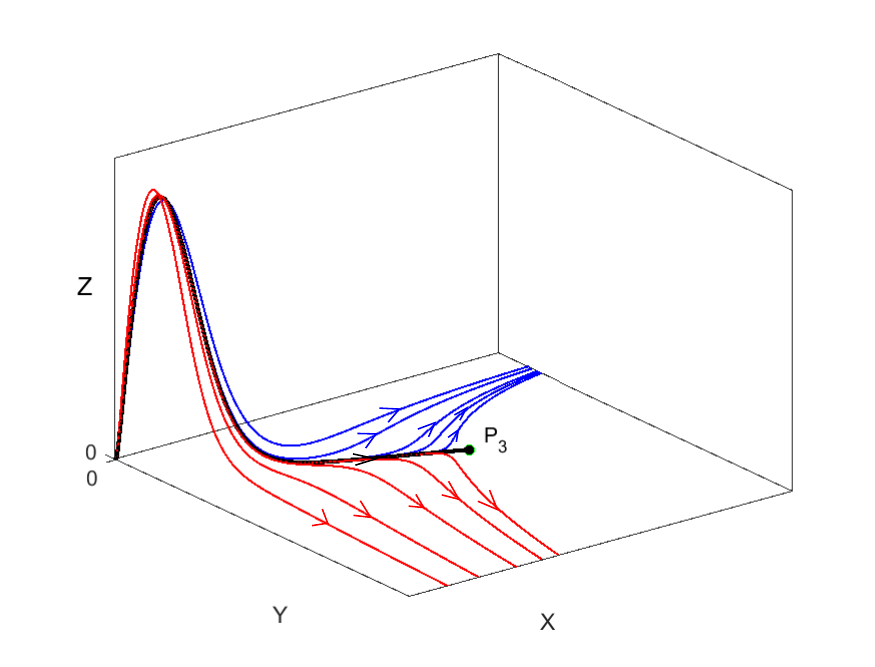}
  \end{center}
  \caption{Phase portrait with orbits on the unstable manifold of $P_0$. Experiment for $m=2/3$, $p=3$, $\sigma=1$, $N=3$.}\label{fig2}
\end{figure}

\section{Blow-up profiles. General facts}\label{sec.bu.general}

We go back now to self-similar solutions in the form \eqref{backward.SS}, presenting finite time blow-up at time $T\in(0,\infty)$ and whose profiles solve the differential equation \eqref{ODE.backward}. In order to classify the possible behaviors, we consider the same change of variables \eqref{PSchange} and obtain the following system
\begin{equation}\label{PSsyst.bu}
\left\{\begin{array}{ll}\dot{X}=X(2+(1-m)Y), \\ \dot{Y}=X-(N-2)Y-Z-mY^2+\frac{p-m}{\sigma+2}XY, \\ \dot{Z}=Z(\sigma+2+(p-m)Y).\end{array}\right.
\end{equation}
Fixing ourselves again in the supercritical range $m\in[m_c,1)$ and in dimension $N\geq3$ for simplicity, we observe that the system \eqref{PSsyst.bu} only has three finite critical points
$$
P_0=(0,0,0), \ \ P_1=\left(0,-\frac{N-2}{m},0\right), \ \ P_2=\left(0,-\frac{\sigma+2}{p-m},\frac{(N-2)(\sigma+2)(p-p_c(\sigma))}{(p-m)^2}\right),
$$
the latter existing only if $p>p_c(\sigma)$. The local analysis of the system in a neighborhood of them is rather similar to the one performed in Section \ref{sec.local} and detailed calculations are given exactly for this system in \cite{IMS23b}, thus we only give below the differences with respect to the previous analysis:

$\bullet$ the critical point $P_0$ has a two-dimensional unstable manifold and a one-dimensional stable manifold as in Lemma \ref{lem.P0}. It is easy to see that \eqref{interm1} holds true and, by integration, the orbits contained in the unstable manifold of $P_0$ have now the local expansion
\begin{equation}\label{orbP0.bu}
(r_C): \qquad Y(\eta)\sim\frac{X(\eta)}{N}-\frac{CX(\eta)^{(\sigma+2)/2}}{N+\sigma}, \qquad C\in[0,\infty), \qquad {\rm as} \ \eta\to-\infty,
\end{equation}
together with the orbit $r_{\infty}$ included in the invariant plane $\{X=0\}$. We observe that, if $\sigma>0$, then $(\sigma+2)/2>0$ and all the orbits $r_C$ with $C\in(0,\infty)$ go out into the positive half-space $\{Y>0\}$ since the first order of approximation becomes
$$
Y(\eta)\sim\frac{X(\eta)}{N}+o(X(\eta)), \qquad {\rm as} \ \eta\to-\infty.
$$
If $\sigma<0$, we conclude that $\sigma+2<2$, thus the first order of approximation is given by
$$
Y(\eta)\sim-C\frac{X(\eta)^{(\sigma+2)/2}}{N+\sigma}+o(X(\eta)^{(\sigma+2)/2}), \qquad {\rm as} \ \eta\to-\infty,
$$
hence on the one hand, all the orbits $r_C$, $C\in[0,\infty)$, go out into the negative half-space $\{Y<0\}$ and on the other hand, by undoing \eqref{PSchange} on the previous approximation and an integration on $(0,\xi)$ for $\xi>0$ small, we find the local behavior \eqref{beh.P0f}. Finally, if $\sigma=0$, we infer from \eqref{orbP0.bu} that both terms in the approximation have the same strength, and the way the trajectories go out from the origin depends on the value of the constant $C>0$: if $C>1$ then $r_C$ enters the half-space $\{Y<0\}$ and if $C<1$ then $r_C$ enters the half-space $\{Y>0\}$.

$\bullet$ The local analysis near the critical points $P_1$ and $P_2$ is totally identical to the one done in Lemmas \ref{lem.P1} and \ref{lem.P2}.

$\bullet$ The critical point $Q_1$ is now identified as $Q_1=(0,0,0)$ in the system (deduced with the same change of variable \eqref{change2})
\begin{equation}\label{PSinf1.bu}
\left\{\begin{array}{ll}\dot{x}=x[(m-1)y-2x],\\
\dot{y}=-y^2+\frac{p-m}{\sigma+2}y+x-Nxy-xz,\\
\dot{z}=z[(p-1)y+\sigma x],\end{array}\right.
\end{equation}
hence the analysis on the two-dimensional center manifold is the same as in Lemma \ref{lem.Q1Q5Qg} and the flow on it has stable direction, while the non-zero eigenvalue is now positive. Thus the center manifold is now unique (see \cite[Theorem 3.2']{Sij}) and its trajectories still contain profiles with tail \eqref{beh.Q1f} as $\xi\to\infty$. In particular, we will be interested in the equation of the center manifold of $Q_1$. By setting $w=x+(p-m)y/(\sigma+2)$ in the system \eqref{PSinf1.bu} and then applying \cite[Theorem 3, Section 2.5]{Carr} in order to identify the Taylor expansion up to second order, we obtain the following equation for the center manifold:
\begin{equation}\label{center.Q1}
y=-\frac{\sigma+2}{p-m}x+\frac{(\sigma+2)^2(N-2)(p_c(\sigma)-p)}{(p-m)^3}x^2+\frac{\sigma+2}{p-m}xz+o(|(x,z)|^2).
\end{equation}
More details of the deduction of the equation \eqref{center.Q1} are given in \cite[Lemma 3.1]{IMS23b}.

$\bullet$ The critical point $Q_5$ is now identified as $Q_5=(0,(p-m)/(\sigma+2),0)$ in the system \eqref{PSinf1.bu}, and we readily obtain by inspecting the matrix of the linearization of the system (see \cite[Lemma 3.2]{IMS23b}) that it has a two-dimensional stable manifold completely included in the invariant plane $\{z=0\}$ (and also $\{Z=0\}$) and a one-dimensional unstable manifold fully included in the invariant plane $\{x=0\}$.

$\bullet$ The local analysis near $Q_2$ and $Q_3$ is totally identical to the one in Lemma \ref{lem.Q23}.

$\bullet$ With respect to the family of critical points $Q_{\gamma}$, one can readily check (by repeating the calculations in, for example, \cite[Lemma 2.4]{ILS23}) that there exists a single point of the family allowing for trajectories of the system \eqref{PSsyst.bu} arriving to it from the finite part of the phase space, namely $Q_{\gamma}$ with
$$
\frac{\sqrt{1-\gamma^2}}{\gamma}=\kappa=\frac{L}{(\sigma+2)(p-1)}>0.
$$
The analysis of the center manifold with similar calculations as in \cite[Lemma 2.4]{ILS23} gives that there exists a unique orbit entering this point and containing profiles with the local behavior
\begin{equation}\label{beh.Qg}
f(\xi)\sim\left(\frac{1}{p-1}\right)^{1/(p-1)}\xi^{-\sigma/(p-1)}, \qquad {\rm as} \ \xi\to\infty.
\end{equation}
Notice that \eqref{beh.Qg} is a tail behavior only for $\sigma>0$, and in this case, it is the slowest decay of a tail as $\xi\to\infty$, since, for example,
$$
\frac{\sigma}{p-1}-\frac{\sigma+2}{p-m}=\frac{-L}{(p-m)(p-1)}<0.
$$

$\bullet$ The critical point $Q_4$ splits again into the points $Q_1'$ and $Q_5'$ if setting $w=xz$ in the system \eqref{PSinf1.bu}. The local analysis of them is very similar to the one in \cite[Section 4]{IMS23b}. Gathering the results therein, new orbits appear only if $m+p<2$ (similarly as in Lemma \ref{lem.Q4.bis}), as for $m+p\geq2$ we get exactly  the same orbits of $Q_1$ and $Q_5$. In the former case, we have
\begin{lemma}\label{lem.Q4.bu}
Assume that $m+p<2$. Then the critical point $Q_5'$ is a stable node, and the profiles contained in the trajectories approaching it present a vertical asymptote as $\xi\to\xi_0\in(0,\infty)$, $\xi<\xi_0$, with the local behavior
\begin{equation*}
f(\xi)\sim\left[K-\frac{(1-m)\beta}{2m}\xi^2\right]^{-1/(1-m)}, \qquad K=\frac{(1-m)\beta}{2m}\xi_0^2,
\end{equation*}
while the orbits entering the critical point $Q_1'$ on the two-dimensional stable sector which differs from the one of $Q_1$ have the local behavior given by
\begin{equation}\label{beh.Q41.bu}
f(\xi)\sim\left\{\begin{array}{lll}\left[K-\frac{p-1}{\beta\sigma}\xi^{\sigma}\right]^{-1/(p-1)}, & K=\frac{p-1}{\beta\sigma}\xi_0^{\sigma}, & {\rm if} \ \sigma>0,\\ \left[K-\frac{p-1}{\beta}\ln\,\xi\right]^{-1/(p-1)}, & K=\frac{p-1}{\beta}\ln\,\xi_0, & {\rm if} \ \sigma=0, \\
\left[\frac{p-1}{|\sigma|\beta}\xi^{\sigma}\right]^{-1/(p-1)}, & & {\rm if} \ \sigma\in(-2,0).\end{array}\right.
\end{equation}
as either $\xi\to\xi_0\in(0,\infty)$, $\xi<\xi_0$ if $\sigma\geq0$, or $\xi\to\infty$ if $\sigma\in(-2,0)$.
\end{lemma}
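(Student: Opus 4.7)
The plan is to adapt the argument of Lemma \ref{lem.Q4.bis} to the backward system \eqref{PSsyst.bu}, paying close attention to the sign changes arising from the sign of $X$ and of the coupling term $XY$ in its second equation. After the substitution $w=xz$ in \eqref{PSinf1.bu}, we obtain an auxiliary three-dimensional system in $(x,y,w)$ whose only finite critical points with $w=0$ are $Q_1'=(0,0,0)$ and $Q_5'=(0,(p-m)/(\sigma+2),0)$; by Lemma \ref{lem.Q4} (whose proof does not use the sign of any term) every trajectory connecting to $Q_4$ must satisfy $w(\eta_1)\to0$, hence must connect to one of these two points.

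For $Q_5'$, I would compute the Jacobian and observe that its three eigenvalues
$$
-\frac{(1-m)(p-m)}{\sigma+2}, \qquad -\frac{p-m}{\sigma+2}, \qquad \frac{(m+p-2)(p-m)}{\sigma+2}
$$
are all negative precisely under $m+p<2$, so $Q_5'$ is a stable node. Notice that the sign of the third eigenvalue is reversed compared to the forward case of Lemma \ref{lem.Q4.bis}, simply because $Y(Q_5)$ has switched sign between the two systems. To extract the profile behavior, I use that along any trajectory entering $Q_5'$ one has $Y(\eta)/X(\eta)\to(p-m)/(\sigma+2)=\beta/\alpha$; undoing \eqref{PSchange} then reduces this to the approximate equation $(f^{m-1})'(\xi)\sim-(1-m)\beta\xi/m$ near the finite $\xi_0$ where $X(\eta)$ blows up, and a direct integration on $(\xi,\xi_0)$ produces the stated local behavior.

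For $Q_1'$ the Jacobian has eigenvalues $0$, $0$, $(p-m)/(\sigma+2)>0$, so the hyperbolic direction is now unstable (the sign of this eigenvalue is also flipped with respect to the forward case). I would rectify it by introducing the change of variable $t=x+\frac{p-m}{\sigma+2}y-w$ (whose only difference with the forward case is the minus sign in front of $w$) and then apply \cite[Theorem 3, Section 2.5]{Carr} to compute the Taylor expansion of the two-dimensional center manifold to second order in $(x,y)$, closely following the scheme of \cite[Lemma 4.3]{IMS23b}. The center manifold contains the orbits inherited from the analogous analysis at $Q_1$ (those lying in $\{w=0\}$) plus a second, distinct sector on which the reduced flow is stable; the leading-order relation on this new sector takes the form $\frac{p-m}{\sigma+2}y \sim w - x + o(|(x,w)|)$, with the sign in front of $w$ flipped with respect to \eqref{interm13}. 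Passing back to profile variables through \eqref{change2} and \eqref{PSchange} then reduces the problem to the leading-order ODE $(f^{1-p})'(\xi)\sim -\frac{p-1}{\beta}\xi^{\sigma-1}$, valid near a finite $\xi_0$ from the left when $\sigma\geq0$ and as $\xi\to\infty$ when $\sigma\in(-2,0)$. Integrating it and distinguishing the three sign cases of $\sigma$ produces the three alternatives in \eqref{beh.Q41.bu}.

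The main technical obstacle is the rigorous center manifold analysis at $Q_1'$: although the leading behavior is captured by a simple first-order identity, one must expand the center manifold to sufficiently high order to guarantee that the resulting error terms remain truly negligible after being divided by $\xi$ in the passage to the profile ODE, exactly as in \cite[Lemma 4.3]{IMS23b}. Once this is in place, everything else reduces to sign-flipped parallels of calculations already carried out in that reference.
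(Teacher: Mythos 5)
Your proposal is correct and follows essentially the same route as the paper, which simply transplants the proof of Lemma \ref{lem.Q4.bis} to the backward system with the single sign change $\frac{p-m}{\sigma+2}y=w+o(w)$ replacing \eqref{interm13}; your explicit eigenvalue computation at $Q_5'$, the rectifying variable $t=x+\frac{p-m}{\sigma+2}y-w$, and the reduction to $(f^{1-p})'(\xi)\sim-\frac{p-1}{\beta}\xi^{\sigma-1}$ all match (and slightly expand on) what the paper does.
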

The proof follows the same lines as the one in Lemma \ref{lem.Q4.bis}, with the difference that \eqref{interm13} is replaced now by
\begin{equation*}
\frac{p-m}{\sigma+2}y=w+o(w),
\end{equation*}
as it can be readily seen from the system \eqref{PSinf1.bu}, which leads to \eqref{beh.Q41.bu}. The reader is referred also to \cite[Lemma 4.2 and Lemma 4.3]{IMS23b} for details.

\section{Existence of blow-up profiles for $\sigma\in(-2,0)$}\label{sec.bu.negative}

This shorter section is devoted to the proof of Theorem \ref{th.blowup.super}, Part 1, which refers to the existence of blow-up self-similar solutions if $\sigma\in(-2,0)$ and for any $p\in(1,p_s(\sigma))$, noticing that $1>p_L(\sigma)$ if $\sigma\in(-2,0)$. We need one preparatory result before the main proof.
\begin{lemma}\label{lem.Z0.bu}
The orbit $r_0$ going out of $P_0$ inside the invariant plane $\{Z=0\}$ enters the critical point $Q_5$.
\end{lemma}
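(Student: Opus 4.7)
The orbit $r_0$ lies entirely in the invariant plane $\{Z=0\}$, where the system \eqref{PSsyst.bu} reduces to the planar system
\begin{equation*}
\dot{X}=X(2+(1-m)Y), \qquad \dot{Y}=X-(N-2)Y-mY^2+\frac{p-m}{\sigma+2}XY.
\end{equation*}
The plan is to show first that $r_0$ enters and remains in the open first quadrant $\{X>0, Y>0\}$, second that it must escape to infinity, and third that the direction of escape corresponds precisely to $Q_5$. For the invariance of the first quadrant, I would use the local expansion \eqref{orbP0.bu} with $C=0$, giving the tangent direction $Y\sim X/N$ so that $r_0$ enters $\{X>0,Y>0\}$ immediately, and then note that $\{X=0\}$ is invariant while on $\{Y=0, X>0\}$ one has $\dot{Y}=X>0$, so the orbit cannot cross either axis.

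For the escape step, the key observation is that $\dot{X}=X(2+(1-m)Y)>0$ throughout the first quadrant, so $X$ is strictly increasing along $r_0$; this immediately rules out periodic orbits. Since the finite critical points of \eqref{PSsyst.bu} in the plane $\{Z=0\}$ are only $P_0$ and $P_1$, both with $Y\leq 0$, the Poincar\'e--Bendixson theorem forces the orbit to be unbounded. One then separately rules out $X$ staying bounded (from $\dot{X}/X=2+(1-m)Y$, boundedness of $X$ would force $Y$ integrable, contradicting $Y\to\infty$) and $Y$ staying bounded for $X\to\infty$ (from $\dot{Y}\sim X[1+(p-m)Y/(\sigma+2)]$ which forces $\dot{Y}\to\infty$). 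Hence both $X,Y\to\infty$ along $r_0$.

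In the last step I would pass to the projected variables $(x,y)=(1/X,Y/X)$ of \eqref{change2}, where $\{Z=0\}$ becomes $\{z=0\}$ and the system is the restriction of \eqref{PSinf1.bu}. Along $r_0$, $x\to 0$ monotonically and $y>0$. The only critical points of the reduced planar system lying on $\{x=0\}$ are $Q_1$ (at $y=0$) and $Q_5$ (at $y=(p-m)/(\sigma+2)$), hence the $\omega$-limit of $r_0$ must sit on the segment between them. To single out $Q_5$ I would rule out $Q_1$ by appealing to its invariant-manifold structure: its linearization in $\{z=0\}$ has eigenvalues $0$ and $(p-m)/(\sigma+2)>0$, the unique one-dimensional unstable manifold is tangent to the $y$-axis and lies on the invariant line $\{x=0\}$, while the one-dimensional centre manifold (on which the flow is stable) is given by \eqref{center.Q1} as $y=-(\sigma+2)x/(p-m)+o(x)$, which sits in $\{y<0\}$ for $x>0$ small. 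Consequently, no trajectory entering from $\{x>0, y>0\}$ can approach $Q_1$, so $r_0$ must enter $Q_5$.

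The main obstacle I anticipate is formally justifying that the $\omega$-limit in the projected variables is a single critical point rather than the full heteroclinic segment $\{x=0, 0\leq y\leq (p-m)/(\sigma+2)\}$; this requires combining the monotone decrease of $x$ (which prevents the orbit from oscillating around $\{x=0\}$) with the exclusion of $Q_1$ via the centre-manifold argument above, so that the only connected invariant subset of $\{x=0\}$ compatible with the trajectory is $\{Q_5\}$.
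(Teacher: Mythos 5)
Your proposal is correct and follows essentially the same route as the paper: the tangent direction $Y\sim X/N$ sends $r_0$ into $\{Y>0\}$, the positive sign of $\dot Y$ on $\{Y=0,\ X>0\}$ keeps it there, monotonicity of $X$ together with the absence of finite critical points in that region forces escape to infinity, and Poincar\'e--Bendixson identifies the $\omega$-limit with the unique attracting point at infinity of the invariant plane $\{Z=0\}$, namely $Q_5$. The additional care you take in excluding $Q_1$ via its centre manifold $y=-\tfrac{\sigma+2}{p-m}x+o(x)$ and in ruling out a non-point $\omega$-limit is precisely the content the paper compresses into the phrase ``which can only be the attractor $Q_5$''.
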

\begin{proof}
If we plug the constant $C=0$ into the general expansion \eqref{orbP0.bu} of the trajectories $r_C$, we observe that $r_0$ has $Y(\eta)\sim X(\eta)/N$ as $\eta\to\infty$, hence this orbit will go out into the positive half-plane $\{Y>0\}$. Moreover, the flow on the invariant plane $\{Z=0\}$ across the axis $\{Y=0\}$ has positive direction, thus the orbit $r_0$ will remain forever in the region $\{Y>0\}$. This readily gives from the first equation of \eqref{PSsyst.bu} that $\dot{X}(\eta)>0$ for any $\eta\in\real$, hence there exists $\lim\limits_{\eta\to\infty}X(\eta)$. The non-existence of another finite critical point of the system easily implies that the previous limit is infinite. Moreover, standard arguments (see for example Poincar\'e-Bendixon's theory, \cite[Theorem 5, Section 3.7]{Pe}) imply that the $\omega$-limit is a critical point at infinity, which can only be the attractor (for the invariant plane $\{Z=0\}$) $Q_5$, as claimed.
\end{proof}
We are now ready for the proof of the existence.
\begin{proof}[Proof of Theorem \ref{th.blowup.super}, Part 1]
It is sufficient to show that, for any $\sigma\in(-2,0)$ and any $p_s(\sigma)>p>1>p_L(\sigma)$, there exists at least a trajectory of the system \eqref{PSsyst.bu} connecting the critical points $P_0$ and $Q_1$. We know that in this range the orbit $r_{\infty}$ connects $P_0$ to $Q_3$ inside the invariant plane $\{X=0\}$ (see \cite[Section 5]{IMS23b}). We introduce the following sets:
\begin{equation*}
\begin{split}
&\mathcal{U}=\{C\in(0,\infty): {\rm on \ the \ orbit} \ r_C \ {\rm we \ have} \ Y(\eta)<0, \ {\rm for \ any} \ \eta\in\real \ {\rm and \ enters} \ Q_3\},\\
&\mathcal{W}=\{C\in(0,\infty): {\rm on \ the \ orbit} \ r_C \ {\rm there \ is \ some} \ \eta\in\real, \ {\rm such \ that} \ Y(\eta)>0\},\\
&\mathcal{V}=(0,\infty)\setminus(\mathcal{U}\cup\mathcal{W}).
\end{split}
\end{equation*}
Since $Q_3$ is a stable node, standard continuity arguments imply that $\mathcal{U}$ is nonempty and open (and in fact it contains a full interval $(C^*,\infty)$ for some $C^*>0$). It is then obvious that $\mathcal{W}$ is an open set by definition. Moreover, Lemma \ref{lem.Z0.bu} together with the continuous dependence on the parameter $C$ of the orbits $r_C$ on the stable manifold of $P_0$ gives that $\mathcal{W}\neq\emptyset$ (and more precisely, it contains an interval $(0,C_*)$ for some $C_*>0$). We conclude that $\mathcal{V}\neq\emptyset$. Let us pick now $C_0\in\mathcal{V}$. We show first that $Y(\eta)<0$ for any $\eta\in\real$ on the orbit $r_{C_0}$. Assume for contradiction that the latter is false. Since $C_0\notin\mathcal{W}$, it follows that $Y(\eta)\leq0$ for any $\eta\in\real$, and since $Y(\eta)<0$ for $\eta$ in a neighborhood of $-\infty$ (this follows from the local analysis near $P_0$ in Section \ref{sec.bu.general}), we infer that there exists a first tangency point $\eta_0\in\real$ such that $Y(\eta_0)=0$, $Y(\eta)<0$ if $\eta\in(-\infty,\eta_0)$ and $Y(\eta)<0$ for any $\eta\in(\eta_0,\eta_0+\epsilon)$ for some $\epsilon>0$. In other words, $Y(\eta_0)=0$ is a strict relative maximum for $Y(\eta)$ and thus $Y'(\eta_0)=0$ and $Y''(\eta_0)\leq0$. We deduce by evaluating the second equation of the system \eqref{PSsyst.bu} at $\eta=\eta_0$ that $X(\eta_0)=Z(\eta_0)$. Differentiating with respect to $\eta$ the same equation, evaluating at $\eta=\eta_0$ and taking into account that $Y(\eta_0)=Y'(\eta_0)=0$, we obtain that
$$
Y''(\eta_0)=X'(\eta_0)-Z'(\eta_0)=2X(\eta_0)-(\sigma+2)Z(\eta_0)=-\sigma X(\eta_0)>0,
$$
since $\sigma\in(-2,0)$. We have thus reached a contradiction, hence $Y(\eta)<0$ for any $\eta\in\real$ on the orbit $r_{C_0}$. Since $C_0\notin\mathcal{U}$, it follows that the orbit $r_{C_0}$ does not enter the node $Q_3$. We next show that $Y(\eta)$ is bounded from below on the orbit $r_{C_0}$. Indeed, the direction of the flow of the system \eqref{PSsyst.bu} across the plane $\{Y=-(N-2)/m\}$, respectively across the plane $\{Y=-(\sigma+2)/(p-m)\}$, is given by the sign of the expressions
\begin{equation}\label{flow.planes}
F_1(X,Z)=\frac{(N-2)(p_c(\sigma)-p)}{m(\sigma+2)}X-Z, \ F_2(X,Z)=-\frac{(\sigma+2)(N-2)(p_c(\sigma)-p)}{(p-m)^2}-Z,
\end{equation}
and we remark that either $F_2(X,Z)<0$ if $p\in(1,p_c(\sigma))$ or $F_1(X,Z)<0$ if $p\geq p_c(\sigma)$. Assume for contradiction that there exists $\eta_1\in\real$ such that either $p\geq p_c(\sigma)$ and $Y(\eta_1)<-(N-2)/m$, or $p\in(1,p_c(\sigma))$ and $Y(\eta_1)<-(\sigma+2)/(p-m)$ along the orbit $r_{C_0}$. It is easy to observe that in both cases, $Y'(\eta)<0$ if $Y(\eta)<-(N-2)/m$ for $p\geq p_c(\sigma)$ or if $Y(\eta)<-(\sigma+2)/(p-m)$ for $p\in(1,p_c(\sigma))$, hence $Y'(\eta)<0$ for any $\eta\in(\eta_1,\infty)$. Moreover, in both cases we also notice that $Z'(\eta)<0$ if $Y(\eta)<-(\sigma+2)/(p-m)$, which is also fulfilled if $p\geq p_c(\sigma)$ and $Y(\eta)<-(N-2)/m\leq-(\sigma+2)/(p-m)$, and we deduce that $Z'(\eta)<0$ for any $\eta\in(\eta_1,\infty)$. Arguments as in \cite[Proposition 4.10]{ILS23b} and its proof then imply that $r_{C_0}$ has a critical point as limit as $\eta\to\infty$, and we easily get that such point might only be $Q_3$, contradicting the fact that $C_0\in\mathcal{V}$. This contradiction allows us to conclude that either $p\in(1,p_c(\sigma))$ and $Y(\eta)\geq-(\sigma+2)/(p-m)$ or $p\geq p_c(\sigma)$ and $Y(\eta)\geq-(N-2)/m$, for any $\eta\in\real$. In any of the two cases, $Y(\eta)>-2/(1-m)$, for any $\eta\in\real$, which also gives $X'(\eta)>0$, thus $X(\eta)$ is increasing on the orbit $r_{C_0}$ and there exists $\lim\limits_{\eta\to\infty}X(\eta)$. If we again let
$$
g(\xi):=\xi^{(\sigma+2)/(p-m)}f(\xi)
$$
and compute the equation satisfied by $g$, we obtain a similar one to \eqref{interm11} with only one sign in front of a term involving $g'(\xi)$ changed, thus we conclude as in Part (b) of the proof of Theorem \ref{th.global.super} that $Z(\eta)$ is monotone on some interval $(\eta_1,\infty)$ and thus there exists $\lim\limits_{\eta\to\infty}Z(\eta)$. Arguments as in \cite[Proposition 4.10]{ILS23b} lead to the fact that the orbit $r_{C_0}$ must have either a single finite critical point as limit, or an $\omega$-limit belonging to the infinity of the space. Since there are no finite critical points in the strips limiting $Y(\eta)$ as above, the latter is true. Since $Y(\eta)$ is bounded from below and negative, we conclude on the one hand that $Y(\eta)/X(\eta)\to0$ as $\eta\to\infty$ on the orbit $r_{C_0}$, and on the other hand that
$$
\sigma+2+(p-m)Y(\eta)-2-(1-m)Y(\eta)=\sigma+(p-1)Y(\eta)<\sigma<0, \qquad \eta\in\real,
$$
which also gives, together with the first and third equation in the system \eqref{PSsyst.bu}, that
$$
\lim\limits_{\eta\to\infty}\frac{Z(\eta)}{X(\eta)}=0.
$$
The change of variable \eqref{change2} then leads to the critical point $Q_1$ as limit point. We thus conclude the proof.
\end{proof}

\section{Non-existence of blow-up profiles for $m\in[m_c,1)$ and $\sigma\geq0$}\label{sec.cons}

This section is devoted to the proof of Parts 2 and 3 in Theorem \ref{th.blowup.super}. The idea is to construct geometric barriers for the flow of the system \eqref{PSsyst.bu} in form of combinations of planes and surfaces. We split the proof into several steps for convenience.
\begin{proof}
Fix at first $\sigma>0$ and $p\in(p_F(\sigma),p_s(\sigma))$. The idea is to show that no orbit $r_C$ defined in \eqref{orbP0.bu} connects to the critical point $Q_1$. Let us first observe from \eqref{beh.Q1f} together with the definition of $Y$ in \eqref{PSchange} that approaching the point $Q_1$ translates into $Y(\eta)\to-(\sigma+2)/(p-m)$ as $\eta\to\infty$ in the system \eqref{PSsyst.bu}. We can thus say by ``abuse of language" that $Q_1$ ``belongs" to the half-space $\{Y<0\}$.

\medskip

\noindent \textbf{Step 1.} Consider the plane of equation
\begin{equation}\label{plane1}
Z={\rm pln}(X,Y):=(N+\sigma)\left(\frac{X}{N}-Y\right), \qquad Y>0,
\end{equation}
which is tangent to the two-dimensional unstable manifold of $P_0$. Taking the normal direction as
$$
\overline{n}=\frac{1}{N}(-(N+\sigma),N(N+\sigma),N),
$$
the direction of the flow of the system \eqref{PSsyst.bu} across the plane \eqref{plane1} is given by the sign of the scalar product between the vector field of the system and the normal direction $\overline{n}$
\begin{equation}\label{interm14}
F_1(X,Y)=-Np(N+\sigma)Y\left[Y-\frac{(p-m)(N+\sigma)+L}{Np(\sigma+2)}X\right].
\end{equation}
Since $Z\geq0$, we have $Y\leq X/N$ on the plane \eqref{plane1}, hence
$$
Y-\frac{(p-m)(N+\sigma)+L}{Np(\sigma+2)}X\leq\frac{X}{N}\left[1-\frac{(p-m)(N+\sigma)+L}{p(\sigma+2)}\right]=\frac{p_F(\sigma)-p}{p(\sigma+2)}X\leq0.
$$
We conclude from \eqref{interm14} that $F_1(X,Y)\geq0$ on the plane \eqref{plane1} if $Y>0$.

\medskip

\noindent \textbf{Step 2.} We prove now that the orbits $r_C$, $C\in(0,\infty)$, of the unstable manifold of $P_0$ go out into the region $\mathcal{D}=\{Z>{\rm pln}(X,Y)\}$. To this end, we need to obtain the second order in the Taylor expansion near the origin of this unstable manifold. We set
\begin{equation}\label{unstable}
Z=(N+\sigma)\left(\frac{X}{N}-Y\right)+aX^2+bXY+cY^2+o(|(X,Y)|^2),
\end{equation}
and by following \cite[Section 2.7]{Shilnikov}, we find the following coefficients
\begin{equation}\label{interm15}
\begin{split}
&a=-\frac{\sigma(N+\sigma)A(m,N,p,\sigma)}{N^2(\sigma+2)(N+2)(N+\sigma+2)(N+2\sigma+2)}, \\
&b=-\frac{(N+\sigma)A(m,N,p,\sigma)}{N(\sigma+2)(N+\sigma+2)(N+2\sigma+2)}, \qquad c=-\frac{(N+\sigma)p}{N+2\sigma+2},
\end{split}
\end{equation}
where
$$
A(m,N,p,\sigma)=-\frac{N(N^2+3N\sigma+4N+2\sigma+4)(p-p_F(\sigma))+(\sigma+2)(N+2)(N+2\sigma+2)}{N}.
$$
We want to show that $aX^2+bXY+cY^2>0$ in a sufficiently small neighborhood of $(0,0,0)$ and very close to the plane $Y=X/N$, which is the first approximation order of the orbits $r_C$ defined in \eqref{orbP0.bu} for $\sigma>0$. At a formal level, we notice that, with the coefficients defined in \eqref{interm15}, we get
$$
aX^2+bXY+cY^2\Big|_{Y=X/N}=\frac{(N+\sigma)(p-p_F(\sigma))}{N(N+2)(\sigma+2)}X^2>0,
$$
since $p>p_F(\sigma)$. To make this rigorous, we subtract these two terms and find
\begin{equation*}
\begin{split}
aX^2+bXY+cY^2&-\frac{(N+\sigma)(p-p_F(\sigma))}{N(N+2)(\sigma+2)}X^2=-\frac{(N+\sigma)(NY-X)}{N^2(\sigma+2)(N+\sigma+2)(N+2\sigma+2)}\\
&\times\left[B(m,N,p,\sigma)X+N(\sigma+2)(N+\sigma+2)Y\right]=o(|(X,Y)|^2),
\end{split}
\end{equation*}
since $NY-X=o(|(X,Y)|)$ as $(X,Y)\to(0,0)$ on the unstable manifold of $P_0$, where in the previous expression,
\begin{equation*}
B(m,N,p,\sigma)=N(N+2\sigma+2)(m-p)+(p+2)\sigma(\sigma+2)+(N+2)(\sigma+2).
\end{equation*}
It thus follows that indeed, for $p>p_F(\sigma)$, in a sufficiently small neighborhood of $P_0$, its unstable manifold \eqref{unstable} goes out into the region $\mathcal{D}$ and, by the outcome of Step 1, it will remain there at least until crossing the plane $\{Y=0\}$. In particular, we get that any trajectory $r_C$ with $C\in(0,\infty)$ either remains forever in the half-space $\{Y>0\}$ (in which case it might not connect to $Q_1$) or it crosses the plane $\{Y=0\}$ at a point such that
\begin{equation}\label{cross}
Z>\frac{N+\sigma}{N}X.
\end{equation}

\medskip

\noindent \textbf{Step 3.} Consider the surface
\begin{equation}\label{surface}
Z={\rm sup}(X,Y):=(N+\sigma)\left(\frac{X}{N}-Y\right)-pY^2+\frac{(p-m)(N+\sigma)}{N(\sigma+2)}XY,
\end{equation}
for $Y\leq0$. We infer from \eqref{cross} that all the orbits on the unstable manifold of $P_0$ crossing the plane $\{Y=0\}$ enter the region $\{Z>{\rm sup}(X,Y)\}$. Taking as normal vector to the surface \eqref{surface}
$$
\overline{N}=\left(-\frac{(N+\sigma)(\sigma+2+(p-m)Y)}{N(\sigma+2)},-\frac{(p-m)(N+\sigma)}{N(\sigma+2)}X+2pY+N+\sigma,1\right),
$$
we obtain that the direction of the flow of the system \eqref{PSsyst.bu} across the surface \eqref{surface} is given by the sign of the expression
\begin{equation}\label{interm16}
\begin{split}
F_2(X,Y)&=\left(Y+\frac{\sigma+2}{p-m}\right)\left[\frac{\sigma(p-m)^2(N+\sigma)}{N^2(\sigma+2)^2}X^2+\frac{(p-m)C(m,N,p,\sigma)}{N(\sigma+2)}XY\right.\\
&\left.+(p-m)pY^2\right],
\end{split}
\end{equation}
with
$$
C(m,N,p,\sigma)=-(N+\sigma)(1-m)-2p\sigma<0.
$$
We readily infer from \eqref{interm16} that $F_2(X,Y)>0$ in the region $\{-(\sigma+2)/(p-m)<Y<0\}$, hence, in order to be allowed to cross the surface \eqref{surface}, an orbit on the unstable manifold of $P_0$ must first reach the plane $\{Y=-(\sigma+2)/(p-m)\}$. Let us also remark here that the intersection between the latter plane and the surface \eqref{surface} is given by the line $\{Z=Z(P_2),Y=-(\sigma+2)/(p-m)\}$, which is positive only if $p>p_c(\sigma)$ and is in this case the parallel line to the $X$-axis through the critical point $P_2$. For the easiness of the reading, we give in Figure \ref{fig3} a picture of both the plane \eqref{plane1} and the surface \eqref{surface}.

\begin{figure}[ht!]
  \begin{center}
  \includegraphics[width=11cm,height=7.5cm]{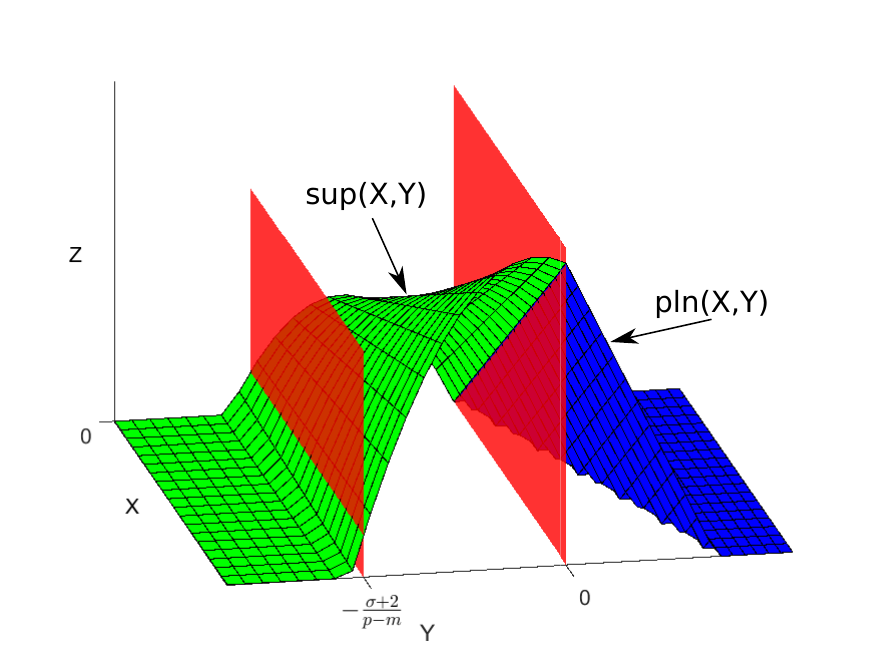}
  \end{center}
  \caption{The plane defined in \eqref{plane1} and the surface defined in \eqref{surface}.}\label{fig3}
\end{figure}

\medskip

\noindent \textbf{Step 4.} We prove in this step that, if either $1<p\leq p_c(\sigma)$ and $Z>0$ or $p>p_c(\sigma)$ and $Z>Z(P_2)$ (where we recall that $Z(P_2)$ is defined in \eqref{zp2}), the orbits entering $Q_1$ and overpassing such values of $Z(\eta)$ do that through the region $\{Z<{\rm sup}(X,Y)\}$. To this end, we first write the surface \eqref{surface} in the $(x,y,z)$ variables of the system \eqref{PSinf1.bu} by recalling the change of variable \eqref{change2}, obtaining the equation
\begin{equation}\label{interm17}
\frac{N+\sigma}{N}x-(N+\sigma)xy+\frac{(p-m)(N+\sigma)}{N(\sigma+2)}y-py^2-xz=0.
\end{equation}
We want to write the surface in the form $y=h(x,z)$ for a suitable function $h$. To this end, we solve the equation \eqref{interm17} in terms of $y$ and compute its Taylor expansion up to second order, getting
\begin{equation}\label{surface2}
y=-\frac{\sigma+2}{p-m}x+\frac{N(\sigma+2)^2(N-2)(p_c(\sigma)-p)}{(N+\sigma)(p-m)^3}x^2+\frac{(\sigma+2)N}{(N+\sigma)(p-m)}xz+o(|(x,z)|^2).
\end{equation}
We want thus to compare, in a neighborhood of the critical point $Q_1$ (that is, of the origin in the system \eqref{PSinf1.bu}) the expansion of the surface $\eqref{surface2}$ with the expansion (up to the same order) of the center manifold \eqref{center.Q1} of the point. By relabelling $y(Q_1)$ the variable $y$ corresponding to the center manifold in \eqref{center.Q1} and $y({\rm sup})$ the variable $y$ corresponding to the surface \eqref{surface2}, we find
\begin{equation}\label{interm18}
\begin{split}
y(Q_1)-y({\rm sup})&=\frac{\sigma x}{N+\sigma}\left[\frac{(\sigma+2)^2(N-2)(p_c(\sigma)-p)}{(p-m)^3}x+\frac{\sigma+2}{p-m}z\right]+o(|(x,z)|^2)\\
&=\frac{\sigma(\sigma+2)x^2}{(N+\sigma)(p-m)}(Z-Z(P_2))+o(|(x,z)^2|)>0
\end{split}
\end{equation}
where we have used in the last line that $Z=z/x$ and noticed that $Z(P_2)<0$ if it is defined (in the same way as in \eqref{zp2}) also for $1<p<p_c(\sigma)$. We conclude from \eqref{interm18} and from the equation of the center manifold \eqref{center.Q1} together with the change of variable \eqref{change2} that the trajectories entering $Q_1$ on the center manifold come from the region $\{Z<{\rm sup}(X,Y)\}$ and with $Y>-(\sigma+2)/(p-m)$.

\medskip

\noindent \textbf{Step 5. Conclusion for $\sigma>0$ and $p\in(p_F(\sigma),p_s(\sigma))$.} From the previous steps, we deduce that for any $p\in(p_F(\sigma),p_s(\sigma))$, the orbits on the unstable manifold of $P_0$ cannot reach the critical point $Q_1$ unless they do it either with $Z<Z(P_2)$ or after crossing first the plane $\{Y=-(\sigma+2)/(p-m)\}$. If $p_F(\sigma)<p\leq p_c(\sigma)$, the former is not possible as $Z(P_2)\leq0$, while the direction of the flow of the system \eqref{PSsyst.bu} across the plane $\{Y=-(\sigma+2)/(p-m)\}$ which has always negative sign according to the expression $F_2(X,Z)$ in \eqref{flow.planes} ensures that the latter is also impossible: once crossed the plane $\{Y=-(\sigma+2)/(p-m)\}$, no trajectory can come back to the half-space $\{Y>-(\sigma+2)/(p-m)\}$ in order to reach the point $Q_1$ afterwards. If $p_c(\sigma)<p<p_s(\sigma)$, it has been proved in \cite[Section 6]{IMS23b} that all the trajectories on the unstable manifold of $P_0$ stay forever in the exterior of the cylinder \eqref{cylinder} (as one can readily verify from \eqref{flow.cyl.ext}). Since in a neighborhood of $Y=-(\sigma+2)/(p-m)$ the height of the cylinder \eqref{cylinder} is larger than $Z(P_2)$, we conclude that the orbits on the unstable manifold of $P_0$ will reach $Z(\eta)>Z(P_2)$ at some finite $\eta\in\real$, since $\dot{Z}(\eta)>0$ while $Y(\eta)>-(\sigma+2)/(p-m)$. Thus, the only way to reach the critical point $Q_1$ remains crossing first the plane $\{Y=-(\sigma+2)/(p-m)\}$, diminishing the value of $Z(\eta)$ and then re-entering the half-space $\{Y>-(\sigma+2)/(p-m)\}$. But the latter is again impossible, as a simple inspection of the expression $F_2(X,Z)$ in \eqref{flow.planes} shows that it is always negative provided $Z>Z(P_2)$, which is always fulfilled at $Y=-(\sigma+2)/(p-m)$ and in the exterior region to the cylinder \eqref{cylinder}. We have thus proved that, if $\sigma>0$ and $p_F(\sigma)<p<p_s(\sigma)$, no trajectory of the system \eqref{PSsyst.bu} can connect $P_0$ to $Q_1$.

\medskip

\noindent \textbf{Step 6. Proof for $\sigma=0$ and $p\in(1,p_s(\sigma))$.} Fix now $\sigma=0$ and notice that, in this case, $p_L(\sigma)=1$. Moreover, we observe that, on the one hand, the intersection of the surface \eqref{surface} with the plane $\{Y=0\}$ reduces in this case to the line $\{X=Z\}$, and on the other hand, the direction of the flow of the system \eqref{PSsyst.bu} across the plane $\{Y=0\}$ is given by the sign of $X-Z$. We thus infer that any orbit (stemming from $P_0$ or any other critical point) which crosses the plane $\{Y=0\}$ from the positive into the negative half-space can only do this crossing at points with $Z>X$, hence entering directly the region $\{Z>{\rm sup}(X,Y)\}$ in the half-space $\{Y<0\}$. This fact allows us to remove the technical restriction $p>p_F(\sigma)$ needed in the first steps of the proof for $\sigma>0$. The same holds true for the trajectories going out of $P_0$ directly into the half-space $\{Y<0\}$, corresponding to the orbits $(r_C)$ with $C>1$ in \eqref{orbP0.bu}, since
$$
Z(\eta)\sim CX(\eta)>{\rm sup}(X(\eta),Y(\eta)), \qquad {\rm as} \ \eta\to-\infty.
$$

One more technical problem appears, as we can see from \eqref{interm18} that the second order Taylor approximation of the surface \eqref{surface2} and of the center manifold \eqref{center.Q1} of $Q_1$ match perfectly for $\sigma=0$. We thus go one step further, to the third order Taylor approximation. On the one hand, the third order approximation of the surface obtained by expressing $y$ in terms of $x$ and $z$ in \eqref{interm17} with $\sigma=0$ is given by
\begin{equation*}
\begin{split}
y({\rm sup})&=-\frac{2}{p-m}x+\frac{4(mN-Np+2p)}{(p-m)^3}x^2+\frac{2}{p-m}xz\\
&-\frac{8(mN-Np+2p)(mN-Np+4p)}{(p-m)^5}x^3\\&-\frac{4(mN-Np+4p)}{(p-m)^3}x^2z+o(|(x,z)|^3).
\end{split}
\end{equation*}
On the other hand, we can compute once more, following \cite[Theorem 3, Section 2.5]{Carr}, this time the third order approximation of the equation of the center manifold of $Q_1$. To this end, what one does in practice is to plug in the ansatz
\begin{equation*}
\begin{split}
y(Q_1)&=-\frac{2}{p-m}x+\frac{4(mN-Np+2p)}{(p-m)^3}x^2+\frac{2}{p-m}xz\\
&+ax^3+bx^2z+cxz^2+dz^3+o(|(x,z)|^3),
\end{split}
\end{equation*}
and employ the equation of the center manifold to identify the similar coefficients, which in fact is completely equivalent to require that the flow of the system \eqref{PSinf1.bu} on the manifold given by the previous third order equation only has terms of higher order than three. By performing some straightforward (but a bit tedious, for which we employed a symbolic calculation program) calculations, one gets in the end the Taylor approximation
\begin{equation*}
\begin{split}
y(Q_1)&=-\frac{2}{p-m}x+\frac{4(mN-Np+2p)}{(p-m)^3}x^2+\frac{2}{p-m}xz\\
&-\frac{8(mN-Np+2p)(mN-Np+4p+2(m-1))}{(p-m)^5}x^3\\&-\frac{4(mN-Np+4p+2(m-1))}{(p-m)^3}x^2z+o(|(x,z)|^3),
\end{split}
\end{equation*}
and thus, by subtracting the two approximations we obtain
\begin{equation}\label{interm19}
y(Q_1)-y({\rm sup})=\frac{8(1-m)x^2}{(p-m)^5}\left[2(N-2)(p_c(0)-p)x+(p-m)^2z\right],
\end{equation}
which again is obviously positive if $1<p\leq p_c(0)$ and is positive provided $Z=z/x>Z(P_2)$ if $p_c(0)<p<p_s(0)$. Since the argument with the cylinder still holds true, we are exactly in the same position as in Step 4 above after concluding the positivity of the expression in \eqref{interm18}. We thus end up the proof for $\sigma=0$ in the same way as in Step 5, by deducing from \eqref{interm19}, the limiting cylinder \eqref{cylinder} and the flow of the system \eqref{PSsyst.bu} across the plane $\{Y=-(\sigma+2)/(p-m)\}$, that no trajectory may exist connecting the critical points $P_0$ and $Q_1$.

\medskip

\noindent \textbf{Step 7. Proof for $\sigma>0$ and $p\in(p_L(\sigma),p_*(\sigma))$.} We have left this step at the end, since the geometric construction will differ with respect to the previous one. Let us start from the system \eqref{PSinf1.bu} in variables $(x,y,z)$ and consider the following surface
\begin{equation}\label{surface4}
\frac{(\sigma+2)(p-1)}{L}x+\frac{(p-m)(p-1)}{L}y-xz=0,
\end{equation}
where we recall that $L$ is defined in \eqref{const.L}. Taking as normal direction to this surface the vector
$$
\overline{n}=\left(\frac{(\sigma+2)(p-1)}{L}-z,\frac{(p-m)(p-1)}{L},-x\right),
$$
we prove next that the direction of the flow of the system \eqref{PSinf1.bu} across this surface is in the opposite direction to the normal. Indeed, the direction of the flow is given by the sign of the scalar product between the vector field of the system \eqref{PSinf1.bu} and the vector $\overline{n}$ given above, which gives the expression
\begin{equation*}
\begin{split}
F(x,y)&=-\frac{(p-1)(\sigma+2)\sigma}{L}x^2-\frac{(p-1)(p-m)(m+p-1)}{L}y^2\\
&+\frac{(p-1)(Nm-Np+m\sigma-2p\sigma-2m+\sigma+2)}{L}xy\\
&-\frac{(p-1)(p-m)^2\sigma}{L^2}x-\frac{(p-1)(p-m)^3\sigma}{L^2(\sigma+2)}y.
\end{split}
\end{equation*}
On the one hand, it is obvious that the sum of the two linear terms in the expression of $F(x,y)$ is non-positive if $y\geq-(\sigma+2)x/(p-m)$. On the other hand, we handle the three quadratic terms in the expression of $F(x,y)$ in the following way: since we are only interested in the region $\{y\leq0\}$, we take $x^2$ as common factor between them and set $c=-y/x$, obtaining thus that the sign of the quadratic part of $F(x,y)$ is given by the sign of the second degree polynomial
\begin{equation}\label{polc}
\begin{split}
P(c)&:=-\frac{p-1}{L}\left[(m+p-1)(p-m)c^2+(Nm-Np+m\sigma-2p\sigma-2m+\sigma+2)c\right.\\&\left.+\sigma(\sigma+2)\right],
\end{split}
\end{equation}
and we obtain after rather straightforward manipulations that $P(c)\leq0$ for any $c$ such that
$$
-\frac{\sigma+2}{p-m}\leq c\leq-\frac{\sigma}{p-1},
$$
since the quadratic factor in brackets in \eqref{polc} is nonnegative in the above mentioned range of $c$. This is the part where the technical limitation appears. Indeed, we readily observe that
$$
P\left(-\frac{\sigma}{p-1}\right)=-\frac{(p-1)(\sigma+2)(m(N+\sigma)-p(N-2))}{L(\sigma+2)}<0,
$$
since $p<p_F(\sigma)<p_c(\sigma)$, but in change
$$
P\left(-\frac{\sigma+2}{p-m}\right)=-\frac{\sigma(p-m)((N-2)(p-1)-m\sigma)}{L(p-1)},
$$
and this is negative if and only if
$$
p_L(\sigma)<p<p_*(\sigma)=1+\frac{m\sigma}{N-2}.
$$
This is why we need to impose what we believe to be only a technical restriction in this step, that $p<p_*(\sigma)$. The rest of the proof of the non-positivity of $P(c)$ is based on comparing the point of maximum of the parabola \eqref{polc} with $c=-\sigma/(p-1)$ and with $c=-(\sigma+2)/(p-m)$ to show that it cannot lie between these two values, and it is here where the restriction $N\geq4$ is needed. We omit here these technical calculations.

All this analysis proves that $F(x,y)\leq0$ provided
\begin{equation}\label{interm32}
-\frac{\sigma+2}{p-m}x\leq y\leq -\frac{\sigma}{p-1}x, \qquad x>0.
\end{equation}
This, together with the decreasing direction of the normal vector $\overline{n}$ (observe that the $z$ component is always negative), gives that no trajectory of the system \eqref{PSinf1.bu} may cross the surface \eqref{surface4} with a decreasing value of $z$. Observe then that the surface \eqref{surface4} can be also written as
\begin{equation}\label{surface4.bis}
y=y_{\rm sup}=-\frac{\sigma+2}{p-m}x+\frac{L}{(p-1)(p-m)}xz,
\end{equation}
which, comparing to the second order approximation of the center manifold of $Q_1$ given in \eqref{center.Q1}, gives
$$
y_{\rm sup}-y_{Q_1}=-\frac{\sigma}{p-1}xz-\frac{(\sigma+2)^2(N-2)(p_c(\sigma)-p)}{(p-m)^3}x^2<0,
$$
which, together with the direct proportionality of $y$ with respect to $z$ in \eqref{surface4.bis}, leads to the fact that the center manifold of the critical point $Q_1$ lies ``below" the surface \eqref{surface4} (in the sense of smaller values of $z$ for the same $x$ and $y$) in a neighborhood of $Q_1$. The direction of the flow of the system \eqref{PSinf1.bu} across the surface \eqref{surface4} implies that the same remains true at least in the range \eqref{interm32}. Noticing that the intersection of the plane $y=-\sigma x/(p-1)$ with the surface \eqref{surface4} is given exactly by the line $\{z=1\}$, we infer that any trajectory on the center manifold \eqref{center.Q1} of $Q_1$ must either lie forever in the region $\{y<-\sigma x/(p-1)\}$ or cross the plane $\{y=-\sigma x/(p-1)\}$ at a point with $z<1$.

Assume now for contradiction that there exists an orbit between $P_0$ and $Q_1$ in the system \eqref{PSinf1.bu}, with $\sigma>0$, $N\geq4$ and $p_L(\sigma)<p<p_*(\sigma)$. Such an orbit must first reach the half-space $\{y>0\}$ and then cross the plane $\{y=0\}$, which can be crossed from the positive to the negative side only at points with $z>1$, as seen from the second equation in \eqref{PSinf1.bu}. Afterwards, the value of $z$ still increases along the trajectory exactly until intersecting the plane $\{y=-\sigma x/(p-1)\}$, as it can be seen from the sign of $\dot{z}$ in the third equation of \eqref{PSinf1.bu}. Thus, such a crossing point must have coordinate $z>1$, which contradicts the fact that all trajectories entering $Q_1$ (and which do not lie forever in the region $\{y<-\sigma x/(p-1)\}$) cross this plane at points with $z<1$, as established above. This contradiction proves that there is no trajectory between $P_0$ and $Q_1$ and thus no self-similar blow-up solution in the specified range of $p$.
\end{proof}

\noindent \textbf{Remark.} Notice that $p_*(\sigma)$ is another critical exponent of the equation Eq. \eqref{eq1}, which comes from equating the exponents $-(N-2)/m$ and $-\sigma/(p-1)$, both of them giving some possible local behavior of profiles. In particular, when $p=p_*(\sigma)$, we have an explicit self-similar solution with vertical asymptote given by
$$
u(x,t)=\left(\frac{N-2}{m\sigma}\right)^{(N-2)/(m\sigma)}(T-t)^{-(N-2)/(m\sigma)}|x|^{-(N-2)/m}.
$$
The plane \eqref{plane1} and surface \eqref{surface} have been also employed with success for $m>1$ in the recent work \cite{IS25b}.

\section{Dimensions $N=1$ and $N=2$}\label{sec.N12}

Since all the previous analysis has been performed in dimension $N\geq3$, we are left with the lower dimensions $N=1$ and $N=2$. We notice first that, as we explained in the Introduction, we are always by default in the supercritical range $m>0\geq m_c$, while $p_c(\sigma)=p_s(\sigma)=+\infty$, thus we will always be as in the cases $p<p_c(\sigma)$ and $p<p_s(\sigma)$ in the previous analysis. There are some technical differences with respect to the local analysis of the systems \eqref{PSsyst} and \eqref{PSsyst.bu} in a neighborhood of the critical points $P_0$ and $P_1$, that we make precise below. Let us choose for the proofs, by convention, the system \eqref{PSsyst}, as the analysis for the system \eqref{PSsyst.bu} will be completely similar.
\begin{lemma}\label{lem.N2}
The critical points $P_0$ and $P_1$ coincide in dimension $N=2$. Denoting by $P_0$ the resulting point, it is a saddle-node presenting a leading three-dimensional center-unstable manifold tangent to the $Y$-axis and a non-leading two-dimensional unstable manifold. The orbits in the center-unstable manifold tangent to the $Y$-axis contain profiles with a vertical asymptote at $\xi=0$ of the form
\begin{equation}\label{beh.P0.N2}
f(\xi)=D(-\ln\,\xi)^{1/m}, \qquad {\rm as} \ \xi\to0, \qquad D>0,
\end{equation}
while the orbits in the unstable manifold contain profiles with one of the local behaviors \eqref{beh.P0f} or \eqref{beh.P0f} according to the sign of $\sigma$.
\end{lemma}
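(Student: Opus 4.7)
The plan is to exploit that $P_0$ and $P_1$ collapse at the origin for $N=2$, analyze the non-hyperbolic critical point that results, and then read off profile behaviors on each of its invariant manifolds. First I would verify that $P_1=(0,-(N-2)/m,0)$ coincides with the origin $P_0$ when $N=2$. The Jacobian $M(P_0)$ then has eigenvalues $\lambda_1=2$, $\lambda_2=0$, $\lambda_3=\sigma+2>0$, with the $\lambda_2$-eigenvector along the $Y$-axis. Thus we obtain a two-dimensional unstable eigenspace transverse to the $Y$-axis, together with a one-dimensional center direction along it, identifying $P_0$ as a saddle-node in our 3D phase space.

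The key structural observation is that the $Y$-axis $\{X=Z=0\}$ is invariant for \eqref{PSsyst}, so it is the (unique) center manifold of $P_0$. On it, the reduced dynamics $\dot Y=-mY^2$ is stable for $Y>0$ and unstable for $Y<0$, which combined with the 2D unstable eigenspace produces the three-dimensional center-unstable manifold claimed in the statement; its generic orbits leaving $P_0$ are tangent to the slow $Y$-direction, while the 2D unstable manifold contains the non-leading orbits departing at exponential rate transverse to the $Y$-axis. To read off the profile behavior on the unstable branch of the center manifold, I would integrate $\dot Y=-mY^2$ explicitly to obtain $Y(\eta)=1/(m\eta+c)$ for $Y<0$, and undoing \eqref{PSchange} via $Y=\xi f'(\xi)/f(\xi)$ and $\eta=\ln\xi$ one integration yields $f(\xi)=D|m\ln\xi+c|^{1/m}$, whose leading order as $\xi\to0$ matches \eqref{beh.P0.N2}. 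For orbits on the 2D unstable manifold, the argument of Lemma \ref{lem.P0} transfers verbatim: \eqref{interm1} persists, while in \eqref{interm2} the degenerate term $C_1X^{-(N-2)/2}$ becomes a constant $C_1$ for $N=2$, which must vanish along orbits passing through $P_0$, leading to \eqref{beh.P0f} according to the sign of $\sigma$ via \eqref{interm3}.

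The main obstacle is making rigorous the correspondence between orbits in the center-unstable manifold that are tangent to the $Y$-axis at $P_0$ but are not contained in it, and the logarithmic profile \eqref{beh.P0.N2}. This requires applying the reduction principle \cite[Theorem 2, Section 2.4]{Carr} to show that along such orbits the transverse components $(X,Z)$ decay quickly enough (in particular, faster than polynomially in $|Y|$) to remain negligible, so that integrating the reduced equation still produces the logarithmic behavior. A minor additional subtlety appears when $\sigma=0$, since the eigenvalues $\lambda_1=\lambda_3=2$ coincide and introduce a resonance in the 2D unstable manifold; this does not alter the leading order \eqref{beh.P0f} but may add higher-order logarithmic corrections, which can be handled as in \cite[Section 2.7]{Shilnikov}.
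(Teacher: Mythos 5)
Your proposal is correct and follows essentially the same route as the paper: identify the saddle--node structure via the eigenvalues $(2,0,\sigma+2)$ and the invariant $Y$-axis, read off the reduced flow $\dot{Y}=-mY^2$ to locate the stable and unstable sectors, and treat the two-dimensional unstable manifold exactly as in Lemma \ref{lem.P0}. The only (minor) difference is in extracting \eqref{beh.P0.N2}: you integrate the reduced equation $\dot Y=-mY^2$ and then $Y=\xi f'(\xi)/f(\xi)$, whereas the paper uses the limits $X/Y\to0$, $Z/Y\to0$ on node-sector orbits to perform a dominant balance directly in \eqref{ODE.forward}; both hinge on the same justification (negligibility of $X$ and $Z$ along orbits tangent to the $Y$-axis), which you correctly identify as the point requiring the reduction principle, so the argument is sound --- only your parenthetical claim that the $Y$-axis is the \emph{unique} center manifold is inessential and slightly imprecise, since center manifolds at a saddle--node are generally non-unique on the node sector.
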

\begin{proof}
The linearization of the system \eqref{PSsyst} in a neighborhood of $P_0$ in dimension $N=2$ has the matrix
$$
M(P_0)=\left(
         \begin{array}{ccc}
           2 & 0 & 0 \\
           -1 & 0 & -1 \\
           0 & 0 & \sigma+2 \\
         \end{array}
       \right),
$$
having a two-dimensional unstable manifold and one-dimensional center manifolds. Using standard theory of center manifolds (see for example \cite[Section 3.2]{GH}), we readily infer that all the center manifolds are tangent to the center eigenspace, which is the invariant $Y$-axis (intersection of the invariant planes $\{X=0\}$ and $\{Z=0\}$), and the direction of the flow on every center manifold is given by the dominating term in the second equation of the system \eqref{PSsyst} near $P_0$, that is
$$
\dot{Y}=-mY^2<0,
$$
thus it is stable in the half-space $\{Y>0\}$ and unstable in the half-space $\{Y<0\}$. We thus obtain a saddle-node (see for example \cite[Section 3.2 and Section 3.4]{GH}) with the saddle sector in $\{Y>0\}$ and the unstable node sector in $\{Y<0\}$. The orbits contained in the node sector satisfy the conditions
\begin{equation}\label{interm26}
\lim\limits_{\eta\to-\infty}\frac{X(\eta)}{Y(\eta)}=\lim\limits_{\eta\to-\infty}\frac{Z(\eta)}{Y(\eta)}=0.
\end{equation}
We want to deduce from \eqref{interm26} the local behavior \eqref{beh.P0.N2}. Taking into account that $\eta=\ln\,\xi$, we have the following limits over trajectories that satisfy \eqref{interm26}:
\begin{equation}\label{lim1}
\lim\limits_{\xi\to0}\frac{(f^m)'(\xi)}{\alpha\xi f(\xi)}=\lim\limits_{\xi\to0}\frac{mf^{m-1}(\xi)\xi f'(\xi)}{\alpha\xi^2 f(\xi)}=\lim\limits_{\xi\to0}\frac{Y(\xi)}{X(\xi)}=\lim\limits_{\eta\to-\infty}\frac{Y(\eta)}{X(\eta)}=-\infty,
\end{equation}
then
\begin{equation}\label{lim2}
\lim\limits_{\xi\to0}\frac{(f^m)'(\xi)}{\xi^{\sigma+1}f^p(\xi)}=\lim\limits_{\xi\to0}\frac{mf^{m-1}(\xi)\xi f'(\xi)}{\xi^{\sigma+2}f^p(\xi)}=\lim\limits_{\xi\to0}\frac{Y(\xi)}{Z(\xi)}=\lim\limits_{\eta\to-\infty}\frac{Y(\eta)}{Z(\eta)}=-\infty,
\end{equation}
and finally
\begin{equation}\label{lim3}
\lim\limits_{\xi\to0}\frac{(f^m)'(\xi)}{\beta\xi^2f'(\xi)}=\lim\limits_{\xi\to0}\frac{\alpha}{\beta X(\xi)}=\lim\limits_{\eta\to-\infty}\frac{\alpha}{\beta X(\eta)}=+\infty.
\end{equation}
We infer from \eqref{lim1}, \eqref{lim2} and \eqref{lim3} that the last three terms in the differential equation \eqref{ODE.forward} are in a first order of approximation negligible with respect to the second term. Joining the first and the second terms and integrating the resulting differential equation in a neighborhood of $\xi=0$ gives the local behavior \eqref{beh.P0.N2}. Finally, the orbits belonging to the unstable manifold generated by the first and third eigenvalues of $M(P_0)$ are completely similar as the ones in Lemma \ref{lem.P0} and the analysis performed there gives the behaviors \eqref{beh.P0f} or \eqref{beh.P0f} according to the sign of $\sigma$. It is obvious that the same happens if we consider dimension $N=2$ in the system \eqref{PSsyst.bu}.
\end{proof}
We analyze now the critical points $P_0$ and $P_1$ in dimension $N=1$, where we have the further restriction $\sigma>-1$ (instead of $\sigma>-2$ as usual).
\begin{lemma}\label{lem.N1}
In dimension $N=1$, the critical point $P_0$ is a stable node, while the critical point $P_1$ is a saddle point having a two-dimensional unstable manifold and a one-dimensional stable manifold contained in the $Y$-axis. The profiles contained in the orbits stemming from $P_0$ have either the local behavior
\begin{equation}\label{beh.P0.N1}
f(\xi)\sim\left[A-K\xi\right]^{-2/(1-m)}, \qquad {\rm as} \ \xi\to0,
\end{equation}
with $A>0$ and $K\in\real\setminus\{0\}$ arbitrary constants, or one of the local behaviors \eqref{beh.P0f} or \eqref{beh.P0f} according to the sign of $\sigma$. The profiles contained in the unstable manifold of the critical point $P_1$ have the local behavior
\begin{equation}\label{beh.P1.N1}
f(\xi)\sim K\xi^{1/m}, \qquad {\rm as} \ \xi\to0, \qquad K>0.
\end{equation}
\end{lemma}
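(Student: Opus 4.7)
The plan is to adapt the analysis carried out in Lemmas \ref{lem.P0} and \ref{lem.P1} to the one-dimensional case, where the formal eigenvalue $-(N-2)$ becomes $+1$ and thereby turns the saddle-type configuration at $P_0$ into a node configuration.

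At $P_0$, the linearization matrix of the system \eqref{PSsyst} becomes
$$
M(P_0)=\begin{pmatrix} 2 & 0 & 0 \\ -1 & 1 & -1 \\ 0 & 0 & \sigma+2 \end{pmatrix},
$$
with three positive eigenvalues $\lambda_1=2$, $\lambda_2=1$, $\lambda_3=\sigma+2$ (recalling that $\sigma>-1$) and corresponding eigenvectors $v_1=(1,-1,0)$, $v_2=(0,1,0)$, $v_3=(0,1,-(\sigma+1))$. Hence $P_0$ is a node with a three-dimensional unstable manifold, equivalently, every trajectory in a small neighborhood of $P_0$ approaches it as $\eta\to-\infty$, i.e., as $\xi\to0$. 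Since $\lambda_2=1$ is the smallest of the three eigenvalues for any $\sigma>-1$, a generic orbit approaches $P_0$ tangentially to $v_2$, giving a codimension-zero, two-parameter subfamily along which $X,Z=o(Y)$ and $Y(\eta)\sim a_2 e^{\eta}=a_2\xi$ as $\eta\to-\infty$. Undoing \eqref{PSchange} leads to $\xi f'(\xi)/f(\xi)\sim a_2\xi$, from which we deduce that $f(0)>0$ is finite and $f'(0)$ is generically nonzero; the ansatz \eqref{beh.P0.N1} then serves as a convenient parametrization of the leading and linear orders of the Taylor expansion at $\xi=0$, with the two parameters $A>0$ and $K\in\real\setminus\{0\}$ matching the two free parameters of the subfamily. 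The orbits with $a_2=0$ are either tangent to $v_1$ (dominant if $\sigma>0$) or to $v_3$ (dominant if $\sigma\in(-1,0)$) and produce the respective cases of \eqref{beh.P0f} through integrations formally identical to those in the proof of Lemma \ref{lem.P0}.

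The analysis at $P_1$ is more direct. Evaluating the generic matrix from the proof of Lemma \ref{lem.P1} at $N=1$ yields the eigenvalues $\lambda_1=(m+1)/m>0$, $\lambda_2=-1<0$ and $\lambda_3=(m\sigma+m+p)/m>0$, confirming a two-dimensional unstable manifold together with a one-dimensional stable manifold. The eigenvector associated with $\lambda_2=-1$ is readily computed to be $(0,1,0)$, and since the $Y$-axis is invariant for the whole system \eqref{PSsyst} as the intersection of the invariant planes $\{X=0\}$ and $\{Z=0\}$, the stable manifold of $P_1$ coincides with the arc of the $Y$-axis joining $P_0$ to $P_1$. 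Along every orbit contained in the two-dimensional unstable manifold, $Y(\eta)\to1/m$ as $\eta\to-\infty$, which by an elementary integration of $\xi f'(\xi)/f(\xi)\to 1/m$ in a right neighborhood of $\xi=0$ yields the local behavior \eqref{beh.P1.N1}.

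The main obstacle is the rigorous identification of the generic two-parameter subfamily of orbits approaching $P_0$ tangentially to $v_2$, as the classical stable manifold argument from Lemma \ref{lem.P0} (based on the hyperbolic stable direction $-(N-2)<0$) no longer applies when that eigenvalue turns positive. We plan to circumvent this by a direct Lyapunov--Perron-type expansion or, equivalently, by the Taylor analysis sketched above for the three coupled ODEs near $P_0$, analogous to the derivation leading from \eqref{interm1} and \eqref{interm2} to \eqref{beh.P0f}, and then to read off the exponent $-2/(1-m)$ in \eqref{beh.P0.N1} as the convenient parametrization compatible with a nonzero first-order Taylor coefficient of $f$ at $\xi=0$.
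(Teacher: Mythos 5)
Your proposal is correct and follows essentially the same route as the paper: linearize \eqref{PSsyst} at $P_0$ and $P_1$ for $N=1$, obtain \eqref{beh.P0.N1} from the generic orbits of the node $P_0$ and \eqref{beh.P0f} from the non-generic ones, and read off \eqref{beh.P1.N1} from $Y(\eta)\to 1/m$ on the two-dimensional unstable manifold of $P_1$. The only cosmetic difference is that the paper extracts the generic two-parameter family by integrating $dY/dX$ to get $Y\sim KX^{1/2}+X-\frac{C}{\sigma+1}X^{(\sigma+2)/2}$ and then integrates $\xi f'/f\sim K'\xi f^{(1-m)/2}$ to produce the exact exponent $-2/(1-m)$, whereas you argue via tangency to the slow eigendirection $\lambda_2=1<\min\{2,\sigma+2\}$ -- the same statement, since $X\sim c e^{2\eta}$ makes $KX^{1/2}$ precisely the rate-one mode; and you are right that $P_0$ is an \emph{unstable} node (all eigenvalues positive), the word ``stable'' in the statement being a slip.
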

\begin{proof}
We already know that in a neighborhood of $P_0$ we have $Z\sim CX^{(\sigma+2)/2}$, thus, the first order of approximation of the orbits going out of $P_0$ is given by the equation of the dominating terms
$$
\frac{dY}{dX}=\frac{Y+X-CX^{(\sigma+2)/2}}{2X},
$$
which gives by integration that
\begin{equation}\label{interm27}
Y(\eta)\sim KX(\eta)^{1/2}+X(\eta)-\frac{C}{\sigma+1}X(\eta)^{(\sigma+2)/2}+l.o.t., \qquad {\rm as} \ \eta\to-\infty.
\end{equation}
When the integration constant $K\neq0$, since $\sigma+2>1$, we observe that the first term in the right hand side of \eqref{interm27} dominates, and by passing to profiles and integrating over $(0,\xi)$ we readily obtain the local behavior \eqref{beh.P0.N1}. When the integration constant $K=0$, we are in the same situation as in Lemma \ref{lem.P0} and we can proceed exactly as there to find the local behaviors \eqref{beh.P0f} or \eqref{beh.P0f} depending on the sign of $\sigma$. Concerning the critical point $P_1$, the linearization of the system \eqref{PSsyst} in a neighborhood of it becomes
$$
M(P_1)=\left(
         \begin{array}{ccc}
           \frac{m+1}{m} & 0 & 0 \\[1mm]
           -\frac{m(\sigma+1)+p}{m(\sigma+2)} & -1 & -1 \\[1mm]
           0 & 0 & \frac{m(\sigma+1)+p}{m} \\
         \end{array}
       \right),
$$
with two positive eigenvalues and one negative eigenvalue. The stable manifold is unique and tangent to the negative eigenspace, hence it is contained in the invariant $Y$-axis, while the unstable manifold is two-dimensional and contains orbits such that $Y(\eta)\to 1/m$ as $\eta\to-\infty$. It is then easy to get the local behavior \eqref{beh.P1.N1} by undoing the change \eqref{PSchange}.
\end{proof}
We remark that the same as above is also true when considering the orbits of the system \eqref{PSsyst.bu}, since the leading terms driving us to the new local behaviors \eqref{beh.P0.N2}, \eqref{beh.P0.N1} and \eqref{beh.P1.N1} do not depend on the terms $X$ and $(p-m)/(\sigma+2)XY$ that change sign in the equation for $\dot{Y}$ from \eqref{PSsyst} to \eqref{PSsyst.bu}. We also notice from the proofs above that the orbits we are interested in, giving the interesting local behaviors as $\xi\to0$, are tangent to the eigenspace corresponding to the eigenvalues $\lambda_1$ and $\lambda_3$ of the matrix $M(P_0)$, and its local behavior in a neighborhood of $P_0$ is still given by \eqref{orbP0.global}, respectively \eqref{orbP0.bu}. By replacing the usual expression $(N-2)(p_c(\sigma)-p)$ by $m(N+\sigma)-p(N-2)>0$ in dimensions $N=1$ and $N=2$, a simple inspection of the proofs of Theorems \ref{th.global.super} and \ref{th.blowup.super} in the range $p_L(\sigma)<p<\infty=p_s(\sigma)$ allows us to conclude that they can be repeated identically and thus also hold true in dimensions $N=1$ and $N=2$.

\bigskip

\noindent \textbf{Acknowledgements} R. G. I. and A. S. are partially supported by the Project PID2020-115273GB-I00 and by the Grant RED2022-134301-T funded by MCIN/AEI/10.13039/ \\ 501100011033 (Spain).

\bibliographystyle{plain}

\end{document}